\newcommand\abs[1]{\left|#1\right|}
\newtheorem{thm}{Theorem}[section]
\newtheorem{prop}[thm]{Proposition}
\newtheorem*{prop*}{Proposition}
\newtheorem{lem}[thm]{Lemma}
\newtheorem{conj}[thm]{Conjecture}
\newtheorem*{thm*}{Theorem}
\newtheorem*{lem*}{Lemma}
\theoremstyle{definition}
\newtheorem{defn}[thm]{Definition}
\newtheorem{notn}[thm]{Notation}
\newtheorem*{defn*}{Definition}
\newtheorem*{fact*}{Fact}
\theoremstyle{remark}
\newtheorem{rem}[thm]{Remark}
\newtheorem*{rem*}{Remark}
\newtheorem*{rems*}{Remarks}
\declaretheoremstyle[notefont=\bfseries,notebraces={}{},%
headpunct={},postheadspace=1em]{mystyle}
\newcommand{\Q}{\mathbf{Q}}
\newcommand{\Z}{\mathbf{Z}}
\newcommand{\C}{\mathbf{C}}
\DeclareMathOperator{\res}{res}
\DeclareMathOperator{\Hom}{Hom}
\DeclareMathOperator{\tr}{tr}
\DeclareMathOperator{\Mod}{mod}
\DeclareMathOperator{\Br}{Br}
\DeclareMathOperator{\Gal}{Gal}
\DeclareMathOperator{\ord}{ord}
\DeclareMathOperator{\LC}{LC}
\newcommand{\SL}[2]{\mathrm{SL}_{#1}{#2}}
\newcommand{\HilbertSymbol}[3]{\ensuremath{\left({\dfrac{#1,#2}{#3}}\right)}}
\newcommand\eqdef{\mathrel{\overset{\makebox[0pt]{\mbox{\normalfont\tiny\rmfamily def}}}{=}}}
\newcommand\mathspace{\mkern\medmuskip}
\renewcommand\Im{\operatorname{Im}}
\let\@@pmod\pmod
\DeclareRobustCommand{\pmod}{\@ifstar\@pmods\@@pmod}
\def\@pmods#1{\mkern4mu({\operator@font mod}\mkern 6mu#1)}
\let\c@equation\c@thm
\newcommand*{\house}[1]{%
	\mathord{%
		\mathpalette\@house{#1}%
	}%
}
\newcommand*{\@house}[2]{%
	\dimen@=\fontdimen8 %
	\ifx#1\scriptscriptstyle\scriptscriptfont
	\else\ifx#1\scriptstyle\scriptfont
	\else\textfont\fi\fi
	3 %
	\sbox0{%
		$#1%
		\vrule width\dimen@\relax
		\overline{%
			\kern2\dimen@
			\begingroup 
			#2%
			\endgroup
			\kern2\dimen@
		}%
		\vrule width\dimen@\relax
		\mathsurround=1.5\dimen@ 
		$%
	}%
	\ht0=\dimexpr\ht0-\dimen@\relax
	\dp0=\dimexpr\dp0+2\dimen@\relax
	\vbox{%
		\kern\dimen@ 
		\copy0 %
	}%
}
\title{Arithmetic of the canonical component of the knot $7_4$}
\address{\newline Department of Mathematics,\newline Rice University,\newline Houston, TX 77005, USA} 
\email{nicholas.rouse@rice.edu}
\author{}
\date{\today}
\begin{document}
	
	\author{Nicholas Rouse}
	\begin{abstract}
	We prove two arithmetic properties of Dehn surgery points on the canonical component of the $\SL{2}{\C}$-character variety of the knot $7_4$. The first is that the residue characteristics of the ramified places of the Dehn surgery points form an infinite set, providing evidence for a conjecture of Chinburg, Reid, and Stover. The second is that the Dehn surgery points have infinite order in the Mordell-Weil group of the elliptic curve obtained by a simple birational transformation of the canonical component into Weierstrass form.
	\end{abstract}
	\maketitle
	
\section{Introduction}
	
Let $\Gamma$ be a finitely generated group, and let $X(\Gamma)$ denote the $\SL{2}{\C}$-character variety of $\Gamma$ (see Section \ref{subsec:character varieties}). When $\Gamma$ is the fundamental group of a compact 3-manifold $M$, work of Thurston and Culler--Shalen established $X(\Gamma)$ as a powerful tool in the study of the geometry and topology of $M$. 
The focus of this paper is arithmetic and algebraic properties of a particular component $C$ (the canonical component, see Section \ref{subsec:character varieties}) of $X(\Gamma)$ when $\Gamma$ is the fundamental group of a particular hyperbolic knot complement ($7_4$ of the tables of \cite{RolfsenBook} and $(15,11)$ in two-bridge notation.).
This has already been studied for different reasons (\cite{ChuCharacterVarieties}).
There are two themes to this: the first is that, following \cite{CRS}, we are particularly interested in a canonically defined quaternion algebra, $A_k(C)$, which is defined over the function field of $C$, $k(C)$, and specializes at Dehn surgery points of $C$ to quaternion algebras defined over number fields. The second theme is to view $C$ as an elliptic curve and to consider the Mordell-Weil group of naturally occurring number field points on $C$.

In more detail, for the first part, for knots satisfying an arithmetic condition on their Alexander polynomials (condition $(\star)$, see Section \ref{subsec:dim1canonicalcomponents}), Chinburg, Reid, and Stover show in \cite{CRS} that there are only finitely many rational primes lying under any finite prime ramifying the specializations of this quaternion algebra. Let us write $S$ for this set of rational primes.  Let us define $S_{D} \subseteq S$ to be the set of rational primes $p$ such that there is a specialization to the character of a hyperbolic Dehn surgery such that the quaternion algebra is ramified at some prime lying above $p$. When condition $(\star)$ fails, it is shown in \cite[Theorem 1.1(3)]{CRS} (using work of Harari \cite{Harari94}) that $S$ is infinite. They furthermore state as a conjecture \cite[Conjecture 6.7]{CRS} that
\begin{conj}\label{conj:inf_bad}
	Let $K$ be a hyperbolic knot in $S^3$ that fails condition $(\star)$, then, in the notation above, $S=S_{D}$.
\end{conj}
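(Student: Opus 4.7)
The plan would be to prove the non-trivial inclusion $S \subseteq S_D$, since $S_D \subseteq S$ is immediate from the definitions. Fix a rational prime $p \in S$; by definition there is some closed point $x \in C$ whose residue field $k(x)$ is a number field and such that the specialized quaternion algebra $A_k(C)|_x$ ramifies at some prime of $k(x)$ above $p$. The goal is to produce a hyperbolic Dehn surgery point with the same property. The strategy splits into (a) describing the ``ramification-above-$p$'' locus in $C$ explicitly, and (b) showing that this locus meets the set of Dehn surgery characters.

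For (a), I would start from a Hilbert-symbol presentation $A_k(C) = \bigl(\tfrac{a,b}{k(C)}\bigr)$, produced from a choice of two non-commuting elements of $\Gamma$ acting by specialization. The locus of points where the specialized algebra is ramified above $p$ is governed by the reductions of $a$ and $b$ modulo primes of $\mathcal{O}_{k(x)}$ lying over $p$, together with local Hilbert symbol conditions. The failure of condition $(\star)$ on the Alexander polynomial is what produces, via Harari's theorem as in \cite{CRS}, infinitely many such $p$; I would want a constructive refinement of Harari that, for each $p$, exhibits not just an abstract closed point of ramification but an entire subvariety (positive-dimensional, or at least Zariski dense in a component) of characters where ramification above $p$ occurs.

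For (b), I would use Thurston's hyperbolic Dehn surgery theorem: Dehn surgery characters accumulate at the discrete faithful representation and, more importantly, are Zariski dense in the canonical component $C$. If the ramification locus above $p$ produced in (a) is Zariski dense (or contains a Zariski-open subset of a positive-dimensional subvariety), then Dehn surgery points intersect it and realize the ramification. One would then need to verify that the Dehn surgery specializations so produced are indeed hyperbolic, which follows from Thurston's theorem for all but finitely many slopes, so only finitely many exceptional slopes need to be excluded for each $p$.

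The main obstacle is step (a): ramification of a specialized quaternion algebra at a prime above a fixed rational prime $p$ is a highly discrete, arithmetic condition, and it is not at all clear a priori that the corresponding locus in $C$ is large enough to be hit by Dehn surgery characters. Harari's theorem as used in \cite{CRS} is a pure existence statement about $S$ being infinite; turning it into a geometric statement about a dense ramification locus inside each canonical component appears to require a genuinely new input. A plausible route is to prove a relative version of Harari's theorem in families over a curve, combined with an equidistribution-type statement for the arithmetic of Dehn surgery characters (which are known to be algebraic and to have controlled conductor growth). Failing a general argument of this form, one is reduced to case-by-case verification as in the present paper's treatment of $7_4$, where the explicit Weierstrass model of $C$ and the Mordell--Weil structure allow direct control of the relevant Hilbert symbols at infinitely many Dehn surgery points. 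Extending such explicit computations uniformly across all knots failing $(\star)$ is the core difficulty, and is why the statement currently stands as a conjecture rather than a theorem.
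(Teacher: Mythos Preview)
The statement you are attempting to prove is labeled in the paper as a \emph{conjecture} (Conjecture~\ref{conj:inf_bad}), not a theorem, and the paper contains no proof of it. There is therefore nothing in the paper to compare your proposal against. The paper's contribution toward this conjecture is strictly weaker and strictly narrower: Theorem~\ref{maintheorem} establishes only that $S_D$ is \emph{infinite} for the single knot $7_4$, not that $S=S_D$ even in that one case. You appear to be aware of this, since your final paragraph explicitly concedes that the statement ``currently stands as a conjecture rather than a theorem.''

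That said, your write-up is not a proof attempt so much as a research outline with a correctly identified gap. The obstruction you name in step~(a) is real and, as far as the paper indicates, unresolved: for a fixed rational prime $p$, the locus of closed points $x\in C$ at which the specialized algebra ramifies above $p$ is an arithmetic condition on residue fields, not a Zariski-closed or Zariski-dense condition on $C$. Your hope that this locus might be ``positive-dimensional'' or ``Zariski dense'' cannot hold on a curve in any useful sense, and Zariski density of Dehn surgery characters in $C$ (step~(b)) gives no purchase on a thin arithmetic locus. Harari's theorem, as invoked in \cite{CRS} and in the paper, is a non-constructive existence statement about infinitely many $p$ appearing in $S$; it says nothing about which points of $C$ witness a given $p$, and no ``relative Harari over a curve'' of the kind you sketch is known. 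The paper's method for $7_4$ sidesteps this entirely: rather than fixing $p$ and searching for a Dehn surgery point, it fixes an explicit infinite family of Dehn surgery points $(d,0)$ and computes enough of the norm of a Hilbert-symbol entry to force ramification at \emph{some} new prime $p\equiv 3\pmod 4$ infinitely often. This yields infinitude of $S_D$ but does not touch the question of whether every $p\in S$ is so realized.
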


As we note in Section \ref{subsec:SevenFourAzumaya}, $7_4$ fails condition $(\star)$. Our first main result is:

\begin{thm}\label{maintheorem} Let $K$ be the knot $7_4$ and $T$ be the set of rational primes $p$ such that there exists a place $\mathfrak{p}$ lying above $p$ of the trace field of some hyperbolic Dehn surgery $(d,0)$ at which the canonical quaternion algebra associated to that surgery is ramified. Then $T$ and hence $S_{D}$ are infinite.
\end{thm}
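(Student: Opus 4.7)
The plan is to produce an explicit Hilbert symbol presentation of the canonical quaternion algebra $A_k(C)$ over the function field $k(C)$ of the canonical component $C$, and then to track how this symbol specializes at the family of $(d,0)$ Dehn surgery points. Concretely, I would first fix a pair of natural generators for the image of $\pi_1(S^3 \setminus 7_4)$ in $\SL{2}{k(C)}$, for instance the meridian together with a suitable conjugate, compute their traces and commutator traces in terms of coordinates on $C$, and thereby exhibit $A_k(C) \cong \left(\frac{\alpha(t),\beta(t)}{k(C)}\right)$ for explicit rational functions $\alpha$ and $\beta$ on $C$. Using the elliptic-curve birational model of $C$ mentioned in the abstract, I would parameterize the $(d,0)$ Dehn surgery points $P_d$ and extract, for each sufficiently large $d$, the specialized Hilbert symbol $\left(\frac{\alpha(P_d),\beta(P_d)}{K_d}\right)$ over the trace field $K_d$ of the surgery.

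The second step is to translate ramification into an arithmetic condition on $d$. At a finite prime $\mathfrak{p}$ of $K_d$ of odd residue characteristic $p$ outside a fixed set of bad primes, the Hilbert symbol is nontrivial precisely when a parity-and-residue condition on $v_\mathfrak{p}(\alpha(P_d))$ and $v_\mathfrak{p}(\beta(P_d))$ holds. The key input is to show that one entry, say $\beta(P_d)$, has absolute norm $\abs{N_{K_d/\Q}(\beta(P_d))}$ growing without bound in $d$; this forces the set of rational primes dividing this norm to include primes of arbitrarily large residue characteristic. At a suitable $\mathfrak{p}$ above such a prime, where $\beta(P_d)$ has odd valuation, ramification of the specialization of $A_k(C)$ at $P_d$ occurs unless $\alpha(P_d)$ reduces to a square in the residue field $\mathcal{O}_{K_d}/\mathfrak{p}$.

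To show that the non-square condition holds for enough primes to produce infinitely many distinct residue characteristics, I would combine the polynomial growth of $\abs{N_{K_d/\Q}(\beta(P_d))}$ with a Chebotarev-type density argument. Because $\alpha$ and $\beta$ arise from essentially independent coordinate functions on the elliptic curve $C$, the reductions of $\alpha(P_d)$ modulo primes dividing $\beta(P_d)$ ought to equidistribute between squares and non-squares, so ramification occurs at a positive proportion of the relevant $\mathfrak{p}$. A pigeonhole step, together with the fact that only finitely many distinct rational primes can appear as divisors of these norms for bounded $d$, then yields infinitely many rational primes in $T$.

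The main obstacle I anticipate is controlling unwanted interactions between the two symbol entries: $\alpha(P_d)$ and $\beta(P_d)$ could share a common divisor at $\mathfrak{p}$, forcing a cancellation in the Hilbert symbol, or the Dehn-filling trace field $K_d$ could contain enough square classes that $\alpha(P_d)$ is a local square at exactly those primes that would otherwise contribute to $T$. Ruling out these coincidences likely requires detailed examination of the minimal polynomial of the Dehn-filling trace over the invariant trace field of $7_4$, exploiting the explicit model of the canonical component from \cite{ChuCharacterVarieties} and using, crucially, the fact (from failure of condition $(\star)$ noted in Section \ref{subsec:SevenFourAzumaya}) that $A_k(C)$ is genuinely nonsplit over $k(C)$ so that the Hilbert symbol does not degenerate generically. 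Once this control is in place, the infinitude of $T$, and therefore of $S_D$, follows from the prime-counting estimate sketched above.
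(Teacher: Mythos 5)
Your first step---computing an explicit Hilbert symbol for $A_{k(C)}$ and specializing it at $(d,0)$ surgery points---matches the paper (Lemma \ref{functionfieldhilbertsymbol}). But your proposed middle and final steps contain genuine gaps. You treat the two symbol entries $\alpha(P_d)$ and $\beta(P_d)$ as "essentially independent coordinate functions" and propose a Chebotarev-type density argument to show $\alpha(P_d)$ is a nonsquare modulo a positive proportion of primes dividing $\beta(P_d)$. That premise is wrong: in the canonical symbol $\HilbertSymbol{-r^3+4r^2-4r-1}{-r}{k(C)}$ the two entries are maximally \emph{dependent} at exactly the primes that matter. If $\mathfrak{p} \mid r$, then the first entry reduces to $-1$ modulo $\mathfrak{p}$, so ramification at $\mathfrak{p}$ is equivalent to $-1$ being a nonsquare in the residue field, i.e., to the rigid congruence condition $p \equiv 3 \Mod{4}$ with odd residue degree. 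There is no equidistribution to exploit; the paper's Proposition \ref{ramification1} pins this down deterministically.

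Your concluding growth-plus-pigeonhole step also has a gap. Unboundedness of $\abs{N_{K_d/\Q}(\beta(P_d))}$ does \emph{not} force rational prime divisors of arbitrarily large residue characteristic (powers of a fixed prime grow without bound), and more importantly it gives no control over which primes are $\equiv 3 \Mod{4}$ and which appear to odd multiplicity---both of which are essential. The paper handles this with two devices you do not anticipate. First, Lemma \ref{primefinitelymanytimes} proves via a multiplicative-order argument (Lemma \ref{pm1modl}) that each fixed prime divides the relevant norm for only finitely many $d$. Second, to produce infinitely many $d$ with a prime divisor $\equiv 3 \Mod{4}$ to odd multiplicity, the paper observes that $N_{\Q(\zeta_d+\zeta_d^{-1})/\Q}(c_d) \equiv 1 \Mod{4}$ always (Lemma \ref{normisonemodfour}), so $\abs{N(c_d)} \equiv 3 \Mod{4}$ if and only if the norm is negative; controlling the sign is then reduced, via the resultant identity of Lemma \ref{absolutevaluesareequal}, to the sign of $\Im(\beta^n)$, which is attacked with Furstenberg's theorem on density modulo $1$ of non-lacunary semigroup orbits. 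That input is a genuinely different and nontrivial tool, and nothing in your sketch supplies a substitute for it.
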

\par
We now turn our attention to the second result, which concerns the arithmetic of Dehn surgery points in the Mordell-Weil group. Thought of as a variety embedded in $\mathbf{P}^2({\C})$, the projective closure of the canonical component $C$ of $7_4$ has singular points, but it is birational to a curve of genus one. Together with a choice of basepoint, such a curve is an elliptic curve. Concretely, the canonical component, $C$, is cut out by $R^3-R^2Z^2+2R^2-1=0$ and is birational via the coordinate change $R=x, Z=y/x$ to $E$, the affine variety cut out by $y^2=x^3+2x^2-1$.  The latter equation is a nonsingular Weierstrass equation and hence determines an elliptic curve by taking the unique point at infinity to be the basepoint. We may then regard a ``Dehn surgery point" on the elliptic curve to be any point in the image of the birational map $C \dashrightarrow E$. \par 
A basic fact about elliptic curves is that their points can be made into an abelian group, and (over $\C$, say) the $n$-torsion points form a subgroup isomorphic to $\Z/n\Z \times \Z/n\Z$. In general it is a difficult problem to produce infinite order points in a particular number field on a given elliptic curve. One of the few ways uses the theory of Heegner points (see e.g. \cite{HeegnerPointsMSRIBook}) which allows one to construct infinite order $\Q$-rational points on certain elliptic curves. However, experimental evidence suggested that hyperbolic Dehn surgery points were never torsion points, and our second main result is that in fact every hyperbolic Dehn surgery point has infinite order.
\begin{thm}\label{maintheorem2} Let $E$ be the elliptic curve defined by $y^2=x^3+2x^2-1$. With the conventions of the above paragraph, every hyperbolic Dehn surgery point has infinite order in the Mordell-Weil group of $E$.
\end{thm}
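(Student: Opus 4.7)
The strategy has three steps: parametrize $P_d$, show that the set of torsion points of $E$ whose $x$-coordinate lies in $\mathbf{Q}^{\mathrm{ab}}$ is finite, and rule out the remaining finitely many candidates directly. \emph{First}, the hyperbolic $(d,0)$-filling relation $\rho_d(m)^d = \pm I$ forces the meridian trace to equal $2\cos(\pi/d)$ on the hyperbolic branch for $d$ large. Since $R$ is a polynomial invariant of traces along the canonical component $C$, one gets $x(P_d) = R_d \in \mathbf{Q}(\cos(\pi/d)) = \mathbf{Q}(\zeta_{2d})^+ \subset \mathbf{Q}^{\mathrm{ab}}$. The $y$-coordinate $y(P_d) = \pm\sqrt{R_d^3 + 2R_d^2 - 1}$ may a priori lie outside $\mathbf{Q}^{\mathrm{ab}}$, but only the $x$-coordinate is needed.

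\emph{Second}, one shows that the set of $x$-coordinates of torsion points of $E$ lying in $\mathbf{Q}^{\mathrm{ab}}$ is finite. The $j$-invariant $j(E) = 2^{14}/5$ is not one of the thirteen CM values, so $E$ has no complex multiplication, and Serre's open image theorem yields $\bar\rho_\ell : G_{\mathbf{Q}} \to \mathrm{GL}_2(\mathbf{F}_\ell)$ surjective for all but finitely many primes $\ell$. For such $\ell \geq 5$ and any $\ell$-torsion point $Q$, the largest normal subgroup of $\mathrm{GL}_2(\mathbf{F}_\ell)$ contained in the stabilizer of $\pm Q$ is $\{\pm I\}$; therefore the Galois closure of $\mathbf{Q}(x(Q))/\mathbf{Q}$ has Galois group $\mathrm{GL}_2(\mathbf{F}_\ell)/\{\pm I\}$, which contains the non-abelian simple group $\mathrm{PSL}_2(\mathbf{F}_\ell)$. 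Since every subfield of $\mathbf{Q}^{\mathrm{ab}}$ has abelian Galois closure, we conclude $x(Q) \notin \mathbf{Q}^{\mathrm{ab}}$. Applied to $Q = (n/\ell)P$ for a torsion point $P$ of order $n$ with $x(P) \in \mathbf{Q}^{\mathrm{ab}}$, this forbids such $\ell$ from dividing $n$; the remaining prime divisors of $n$ therefore lie in a finite exceptional set, and an $\ell$-adic refinement using openness of $\rho_\ell$ in $\mathrm{GL}_2(\mathbf{Z}_\ell)$ bounds the $\ell$-power part of $n$ for each.

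\emph{Third}, if some $P_d$ were torsion then $x(P_d)$ would belong to the finite set above. Since $d \mapsto 2\cos(\pi/d)$ is injective for integers $d \geq 2$ and accumulates only at $2$ (the complete hyperbolic structure), at most finitely many $d$ can yield a torsion $P_d$. These are eliminated case-by-case by evaluating the relevant division polynomials $\psi_n$ of $E$ at the candidate values of $x(P_d)$, after discarding the finitely many non-hyperbolic exceptional fillings of $7_4$.

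The principal obstacle lies in making Step 2 effective: one must identify the exceptional primes $\ell$ for which $\bar\rho_\ell$ fails surjectivity and bound the $\ell$-power torsion in $\mathbf{Q}^{\mathrm{ab}}$ at each. The small discriminant $\Delta(E) = 5$ keeps this a tractable finite computation, for instance via $\#\tilde E(\mathbf{F}_p)$ at a few small primes $p$ of good reduction. An alternative approach that bypasses Serre's theorem is to reduce $E$ and $P_d$ modulo a single good rational prime $p$ and bound the order of $P_d$ via the sizes of $\tilde E(\mathbf{F}_{p^f})$, though this requires delicate control over the residue degree $f$ of the trace field of $M(d,0)$ as $d$ varies.
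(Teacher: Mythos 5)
Your Step 1 contains a fatal error that sinks the whole argument. You claim that since the meridian trace $Z$ satisfies $Z = 2\cos(2\pi/d)$ (a real cyclotomic number) and ``$R$ is a polynomial invariant of traces along the canonical component $C$,'' it follows that $x(P_d) = R_d \in \mathbf{Q}(\cos(\pi/d)) \subset \mathbf{Q}^{\mathrm{ab}}$. This is false: the curve $C$ is cut out by $R^3 - R^2Z^2 + 2R^2 - 1 = 0$, which is a cubic in $R$, so $R$ is a three-valued algebraic function of $Z$, not a rational function. Indeed $C$ is birational to an elliptic curve, hence of genus $1$, and cannot be rationally parametrized by $Z$ at all. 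The paper's Lemma \ref{irreducibility} shows this cubic is irreducible over $\Q(\zeta_d+\zeta_d^{-1})$ for odd $d$, so $R_d$ generates a genuine degree-$3$ extension of the real cyclotomic field; there is no reason for the trace field $k_d = \Q(r_d, \zeta_d + \zeta_d^{-1})$ to be abelian over $\Q$, and in general it is not. Without $x(P_d) \in \mathbf{Q}^{\mathrm{ab}}$, your Steps 2 and 3, which are correctly set up, have nothing to bite on.

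A second, independent issue: Theorem \ref{maintheorem2} asserts infinite order for \emph{every} hyperbolic Dehn surgery point, i.e., for all $(p,q)$ fillings, whereas your parametrization in Step 1 only makes sense for $(d,0)$ fillings (where the meridian has finite order). For general $(p,q)$ the meridian trace is not cyclotomic, and the argument collapses even earlier.

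For comparison, the paper's proof uses an entirely different and more elementary mechanism. Rewriting the defining equation as $R^2(R + 2 - Z^2) = 1$ shows that $R$ is a unit in $\mathcal{O}_k$ whenever $Z$ is an algebraic integer; by Bass's theorem and the Hatcher--Thurston non-Haken result for two-bridge knots, $Z$ is integral for all but a finite list of exceptional slopes (which are checked by hand). Meanwhile Proposition \ref{prop:2-adic valuation}, proved via explicit $2$-adic congruences of the division polynomials $f_n$, shows every torsion point of $E$ other than the $2$-torsion has $x$-coordinate with \emph{positive} $2$-adic valuation. A unit has $2$-adic valuation zero, so Dehn surgery points avoid the torsion locus; the $2$-torsion is excluded because its $x$-coordinates are real, which would force a real trace field, impossible for a finite-covolume Kleinian group. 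This avoids both Serre's open image theorem and any control over the Galois theory of the trace fields, and it gives the full theorem for all surgeries in one stroke.
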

In our setting characters of hyperbolic Dehn surgeries have infinite order, and they are rational points over their trace fields. The proof of \ref{maintheorem2} (see Section \ref{TorsionPointsSection}) combines an algebraic fact (Proposition \ref{prop:2-adic valuation}) with mostly topological results of Bass, Hatcher, and Thurston. The algebraic fact shows that nonpositive $2$-adic valuation of the $x$-coordinate of a point on $E$ obstructs being a torsion point. The topological results imply that no Dehn surgery point can have such a form.

\subsection{Outline}
The paper is organized as follows. We introduce some background material on canonical components, trace field, and quaternion algebras in Section \ref{quaternionalgebrassection}. Then we discuss their generalization, Azumaya algebras and how they figure into studying ramification in Section \ref{AzumayaSection}. We then give a proof of Theorem \ref{maintheorem} in Section \ref{maintheoremproofsection}. There are many lemmas used in the proof, and we delay their proofs until Section \ref{proofs} so that the proof of \ref{maintheorem} may be read more easily. Finally in Section \ref{TorsionPointsSection} we provide some background material on elliptic curves and prove Theorem \ref{maintheorem2}.
\subsection{Acknowledgments} The author wishes to thank his advisor, Alan Reid, for suggesting the problems in this paper as well as his support and guidance in both the mathematical and writing phases of this paper's preparation. The author would also like to acknowledge the anonymous referees for their helpful comments and suggestions, with special thanks to the one who pointed out Theorem \ref{thm:nonlacunary}, which simplified the original argument.

\section{The Canonical Component, Traces Fields, and Quaternion Algebras} \label{quaternionalgebrassection}
In this section we provide some background material about character varieties and quaternion algebras. We then establish the canonical component of $7_4$ and a tractable form of the canonical quaternion algebra.
\subsection{Character Varieties} \label{subsec:character varieties}  We begin by recalling that, for a finitely generated group $\Gamma$, the $\SL{2}{\C}$-representation variety of $\Gamma$ is $R(\Gamma) = \Hom(\Gamma, \SL{2}{\C})$. Given a generating set $\{\gamma_i\}$, we identify a representation $\rho : \Gamma \rightarrow \SL{2}{\C}$ with $(\rho(\gamma_1),\dots,\rho(\gamma_n)) \subset \SL{2}{\C}^n \subset \C^{4n}$. Given a different choice of generators, there is a canonical isomorphism between the two subsets of $\C^{4n}$ obtained this way. Fixing an element $\gamma \in \Gamma$, we may define a map $I_{\gamma}$ on $R(\Gamma)$ that associates to a representation $\rho$ the trace of $\rho$. That is, $I_{\gamma} : R(\Gamma) \rightarrow \C$ is defined by $I_{\gamma}(\rho) = \tr \rho(\gamma) = \chi_{\rho}(\gamma)$. This $I_{\gamma}$ is a regular function on the algebraic set $R(\Gamma)$, and the ring $T$ generated by all such $I_{\gamma}$ turns out to be finitely generated. This is \cite[Proposition 1.4.1]{CullerShalen}. Fixing a generating set $I_{\gamma_1},\dots,I_{\gamma_m}$ for $T$, define a map $t: R(\Gamma) \rightarrow \C^m$ by $t(\rho) = (I_{\gamma_1}(\rho),\dots,I_{\gamma_m}(\rho))$. Then define the $\SL{2}{\C}$-character variety of $\Gamma$ to be $t(R(\Gamma)) \subset \C^m$. This is a closed algebraic set, and different choices of generators for $T$ give isomorphic algebraic sets. When $\Gamma$ is the fundamental group of the complement of a hyperbolic knot $K$ in $S^3$, we define its canonical component to be the irreducible component of $X(\Gamma)$ containing the character of the discrete and faithful representation of $\pi_1(S^3 \backslash K)$. We refer the reader to \cite{CullerShalen} for more detail.\par
\subsection{Computation of the Character Variety} \label{subsec:charactervarietycomputation}
We start with the fundamental group of the complement of $7_4$ in $S^3$ and some notation for the canonical component.
\begin{notn} Let $K$ be the knot $7_4$ in $S^3$. Write $\Delta_K(t) = 4t^2-7t+4$ for its Alexander polynomial. We also write $\Gamma$ for the fundamental group of the complement of $K$ in $S^3$. We use the following presentation
\[
\Gamma = 
\pi_1(S^3 \backslash K) = \langle a,b \mathspace \vert \mathspace aw^2 = w^2b \rangle,
\]
where $w = ab^{-1}ab^{-1}a^{-1}ba^{-1}b$. 
For a representation $\rho : \Gamma \rightarrow \SL{2}{\C}$, we conjugate so that
\[
\begin{aligned}
\rho(a) &= \begin{pmatrix}
		x & 1 \\
		0 & 1/x
		\end{pmatrix} \\
		\rho(b) &= \begin{pmatrix}
		x & 0 \\
		r & 1/x
		\end{pmatrix}.
		\end{aligned}
		\]
	\end{notn}
	In defining an algebraic set, one should generally avoid expressions like $1/x$, but here we use it as a shorthand for $y$ where $xy=1$. Also, note that this presentation for the fundamental group comes from the two-bridge normal form for $7_4$, namely $(15,11)$.  These facts and the following are in \cite[Section 5]{ChuCharacterVarieties}.
	\begin{prop}\label{charactervariety} If we write
		\[
		Z = \chi_{\rho}(a) = \chi_{\rho}(b) = x + \frac{1}{x},
		\]
		and
		\[
		R = \chi_{\rho}(ab^{-1}) = \tr \begin{pmatrix}
		1-r & x \\
		-r/x & 1
		\end{pmatrix} = 2-r,
		\] 
	then the $\SL{2}{\C}$ character variety has canonical component given by the vanishing of $R^3-R^2 Z^2+2 R^2-1$. 
	\end{prop}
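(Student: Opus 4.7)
The plan is to impose the single defining relation $aw^2 = w^2b$ on the generic matrices for $\rho(a)$ and $\rho(b)$ and translate the resulting polynomial conditions into the trace coordinates $Z$ and $R$.

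First I would compute $\rho(w)$ explicitly as a $2\times 2$ matrix with entries in $\Z[x,x^{-1},r]$ by multiplying out the nine factors of $w = ab^{-1}ab^{-1}a^{-1}ba^{-1}b$; since $\rho(a)$ is upper triangular and $\rho(b)$ is lower triangular, the alternating product keeps bookkeeping manageable. Squaring, I obtain $\rho(w)^2$, and then form the matrix equation
\[
M \eqdef \rho(a)\rho(w)^2 - \rho(w)^2\rho(b) = 0.
\]
The four entries of $M$ give polynomial equations in $x$ (with inverse $1/x$) and $r$ that cut out $R(\Gamma)$ inside the chosen slice of $\SL{2}{\C}^2$.

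Next I would rewrite these entries symmetrically in $x$ and $x^{-1}$ so that they become polynomials in $Z = x+x^{-1}$ and $r$; clearing denominators and using the Cayley--Hamilton identity $\rho(a)+\rho(a)^{-1} = Z\cdot I$ at appropriate moments dramatically shortens the expressions. The ideal generated by the entries of $M$ is expected to factor, with extraneous factors corresponding to (i) reducible representations (where $r=0$ and the relation is automatic) and possibly (ii) other, non-canonical irreducible components of $X(\Gamma)$. Discarding these, substituting $r = 2-R$, and simplifying should leave exactly $R^3 - R^2Z^2 + 2R^2 - 1$.

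The routine obstacle is simply the size of the matrix computation for $\rho(w)^2$, which is best handed to a computer algebra system; the conceptual obstacle is identifying the correct irreducible factor as the canonical component. For this last step I would check that the discrete faithful representation (whose $(Z,R)$-values can be recovered from the known hyperbolic structure on the complement of $7_4$, since $Z$ equals the square root of the derivative of the holonomy at the meridian evaluated appropriately, and $R$ is determined from the commutator trace) indeed satisfies $R^3 - R^2Z^2 + 2R^2 - 1 = 0$, while the discarded factors either force $r=0$ or fail to pass through this character. Alternatively, one may simply invoke the explicit computation in \cite[Section 5]{ChuCharacterVarieties}, where exactly this case is worked out.
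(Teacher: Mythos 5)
The paper does not actually supply a proof of this proposition: it simply states the computation and attributes it to \cite[Section 5]{ChuCharacterVarieties}, saying ``These facts and the following are in [Chu, Section 5].'' Your closing sentence — ``Alternatively, one may simply invoke the explicit computation in \cite[Section 5]{ChuCharacterVarieties}'' — is therefore exactly what the paper does, so at that level the approaches coincide. Your preceding sketch is a reasonable outline of what lies behind Chu's computation, and the structure (impose the relator on the generic matrices, pass to the symmetric functions $Z = x + x^{-1}$ and $r$, factor, identify the canonical component via the discrete faithful character) is the standard route. Two small refinements worth noting: for a two-bridge relator $av = vb$ the four entries of $\rho(a)\rho(v)-\rho(v)\rho(b)$ do not give four independent equations — the trace and determinant constraints collapse the ideal to a single ``Riley-type'' polynomial in $(Z,r)$ of degree $(p-1)/2 = 7$ in $r$ for $7_4 = (15,11)$ — and the extraneous factors to discard are not only the reducible locus ($r = 0$) but also a nontrivial second component containing irreducible characters (this is precisely what Chu finds). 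Both points are addressed by your proposed verification at the discrete faithful character, so the argument you outline would close, but it is substantially more work than the paper undertakes, which is none.
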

\subsection{Quaternion Algebras over Fields} We now recall some facts about quaternion algebras (see, e.g., \cite[Ch.2]{MR}). Recall that a \textbf{quaternion algebra} $A$ over a field $F$ of characteristic not equal to $2$ is a $4$-dimensional central simple algebra over $F$. More concretely, $A$ is a $4$-dimensional algebra over $F$ admitting an  $F$-basis $\{1,i,j,ij\}$ with $i^2=a$, $j^2=b$, and $ij=-ji$ where $a,b \in F^{*}$. One may efficiently encode this information with a \textbf{Hilbert symbol}, $\HilbertSymbol{a}{b}{F}.$ Note that any quaternion algebra is described by many Hilbert symbols. \par
Though we will have occasion to consider quaternion algebra over function fields, our real objective is to study the quaternion algebras that are associated to Dehn surgery points. These are quaternion algebras over number fields. In this situation, there is a powerful classification theorem that in some sense justifies investigating the ramification set of Theorem \ref{thm:1.2OfCRS} and Theorem \ref{maintheorem} in the first place. We begin with a quaternion algebra $A$ over a number field $L$. Given a place $\mathfrak{p}$ of $L$, one may form the completion $L_{\mathfrak{p}}$ and extend $A$ to a quaternion algebra $A_{\mathfrak{p}} = A \otimes_L L_{\mathfrak{p}}$. There are exactly two isomorphism classes of quaternion algebras over the local field $L_{\mathfrak{p}}$. If $A_{\mathfrak{p}}$ is isomorphic to $M_2(L_{\mathfrak{p}})$ then $A$ is said to \textbf{split} at $\mathfrak{p}$. Otherwise, $A_{\mathfrak{p}}$ is the unique division quaternion algebra over $L_{\mathfrak{p}}$ and $A$ is said to \textbf{ramify} at $\mathfrak{p}$.
We state the version of the classification theorem for quaternion algebras over number fields as it appears in \cite[Theorem 7.3.6]{MR}.
\begin{thm}\label{classificationtheorem} Let $A$ be a quaternion algebra over the number field $L$ and let $\mathrm{Ram}(A)$ denote the set of places at which $A$ is ramified. Then,
	\begin{enumerate}
		\item $\mathrm{Ram}(A)$ is finite of even cardinality.
		\item Let $A_1$ and $A_2$ be two quaternion algebras over $L$. Then $A_1 \equiv A_2$ if and only if $\mathrm{Ram}(A_1) = \mathrm{Ram}(A_2)$.
		\item Let $S$ be any finite set of even cardinality of finite and nonreal infinite places, then there exists a quaternion algebra $A$ over $L$ with $\mathrm{Ram}(A) = S$. 
	\end{enumerate}
\end{thm}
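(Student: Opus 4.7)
The plan is to deduce Theorem \ref{classificationtheorem} from the Hasse--Brauer--Noether--Albert theorem, which (by global class field theory) identifies the image of $\Br(L) \to \bigoplus_v \Br(L_v)$ with the kernel of the summation map $\sum_v \mathrm{inv}_v : \bigoplus_v \Br(L_v) \to \Q/\Z$ built from the local invariants $\mathrm{inv}_v$. Since quaternion algebras over $L$ correspond to $2$-torsion classes in $\Br(L)$, and the local invariant of a quaternion algebra lies in $\{0,1/2\} \subset \Q/\Z$, the whole statement reduces to extracting the $2$-torsion from this exact sequence.

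For part (1), first I would prove finiteness of $\mathrm{Ram}(A)$ directly. Write $A = \HilbertSymbol{a}{b}{L}$ for some $a,b \in L^{*}$. At any finite place $\mathfrak{p}$ of odd residue characteristic at which $a,b$ are both units, a Hensel's lemma argument on the conic $ax^2 + by^2 = z^2$ produces a nontrivial zero over $L_{\mathfrak{p}}$, so $A$ splits at $\mathfrak{p}$. Only finitely many places fail these hypotheses. Evenness then follows from Hasse reciprocity: $\sum_v \mathrm{inv}_v(A) = 0$ in $\Q/\Z$, and since every nonzero summand equals $1/2$, the number of nonzero summands must be even.

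For part (2), if $\mathrm{Ram}(A_1) = \mathrm{Ram}(A_2)$ then $\mathrm{inv}_v(A_1) = \mathrm{inv}_v(A_2)$ at every place (each is either $0$ or $1/2$, and the support coincides by hypothesis). Hence $A_1 \otimes_L A_2^{\mathrm{op}}$ has trivial invariant at every place, so by injectivity of $\Br(L) \to \bigoplus_v \Br(L_v)$ it is trivial in $\Br(L)$, i.e., isomorphic to $M_4(L)$. Dimension counting and Wedderburn then force $A_1 \cong A_2$. For part (3), given a finite even-cardinality set $S$ of finite and nonreal infinite places, I would construct the element of $\bigoplus_v \Br(L_v)$ whose component at $v \in S$ is the unique nontrivial $2$-torsion class of $\Br(L_v)$ and is $0$ elsewhere. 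The sum of its invariants is $|S|/2 \in \Z$, hence $0$ in $\Q/\Z$, so by the Hasse--Brauer--Noether theorem this element lifts to a class in $\Br(L)[2]$, and any such class is represented by a quaternion algebra $A$ with $\mathrm{Ram}(A) = S$.

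The genuine content and the main obstacle is the Hasse--Brauer--Noether theorem itself, which packages together the reciprocity law and the Hasse principle for the Brauer group; developing this from scratch requires the full machinery of global class field theory. Once it is available as a black box, the three parts of Theorem \ref{classificationtheorem} are formal consequences of specializing to $2$-torsion, as outlined above, which is why the statement is simply quoted from \cite[Theorem 7.3.6]{MR}.
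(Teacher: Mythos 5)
The paper does not prove this statement; it is quoted verbatim from Maclachlan--Reid \cite[Theorem 7.3.6]{MR}, and your outline reproduces the standard proof given there and in any treatment of the Brauer group of a global field, so the two approaches coincide. Your sketch is correct; the only step left implicit in part (3) is that a $2$-torsion class in $\Br(L)$ is actually represented by a quaternion algebra, which follows because period equals index over a number field (itself a corollary of the Hasse--Brauer--Noether--Albert theorem you are already invoking).
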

There is a relatively easy way to compute the ramification sets. We use the following description of the ramification of a quaternion algebra over a $\mathfrak{p}$-adic field which is sufficient for our purposes.
\begin{thm} [{{\cite[Theorem 2.6.6.(b)]{MR}}}] \label{MRRamification}
Let $L$ be a non-dyadic $\mathfrak{p}$-adic field, with ring of integers $\mathcal{O}$ and maximal ideal $\mathfrak{p}$. Let $A = \HilbertSymbol{a}{b}{L}$, where $a,b \in \mathcal{O}$. If $a \notin \mathfrak{p}$, $b \in \mathfrak{p}\backslash \mathfrak{p}^2$, then $A$ splits if and only if $a$ is a square modulo $\mathfrak{p}$.
\end{thm}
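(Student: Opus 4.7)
The plan is to use the classical norm criterion for quaternion splitting: when $a$ is not a square in $L$, $\HilbertSymbol{a}{b}{L}$ splits if and only if $b$ lies in $N_{L(\sqrt{a})/L}\bigl(L(\sqrt{a})^{*}\bigr)$. Given this, the theorem splits into two cases depending on whether the reduction $\bar a \in k := \mathcal{O}/\mathfrak{p}$ is a square.

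First, if $\bar a$ is a square in $k$, I would invoke Hensel's lemma on $f(X) = X^2 - a \in \mathcal{O}[X]$. Because $L$ is non-dyadic, $2$ is a unit in $\mathcal{O}$, and because $a \notin \mathfrak{p}$ we have $\bar a \neq 0$; so at any root $\alpha_0 \in k$ of $\bar f$, the derivative $f'(\alpha_0) = 2\alpha_0$ is a unit. Hensel's lemma then lifts $\alpha_0$ to an actual square root $\alpha \in \mathcal{O}$ of $a$, giving $A \cong \HilbertSymbol{\alpha^2}{b}{L} \cong M_2(L)$, so $A$ splits.

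Next, suppose $\bar a$ is not a square in $k$. Then $a$ cannot be a square in $L$ (reducing a square root would produce one for $\bar a$), so $K := L(\sqrt{a})$ is a genuine quadratic extension. Its minimal polynomial $X^2 - a$ lies in $\mathcal{O}[X]$ and reduces to the irreducible polynomial $X^2 - \bar a$ over $k$; hence the residue field of $K$ has degree $2$ over $k$, matching $[K:L]=2$, which forces $e(K/L)=1$. Thus $K/L$ is unramified and $\pi_L$ remains a uniformizer of $K$. Writing $\sigma$ for the nontrivial element of $\Gal(K/L)$ and $v_K$ for the normalized valuation on $K$, continuity of $\sigma$ gives $v_K \circ \sigma = v_K$, so for any $\gamma \in K^{*}$ one has
\[
v_L\bigl(N_{K/L}(\gamma)\bigr) = v_K(\gamma) + v_K(\sigma(\gamma)) = 2\, v_K(\gamma),
\]
i.e.\ every norm has even $\mathfrak{p}$-adic valuation. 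Since $v_L(b) = 1$, $b$ is not a norm, and the norm criterion forces $A$ to be the division quaternion algebra over $L$.

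The only real obstacle is assembling the norm criterion for splitting and correctly identifying $K/L$ as unramified; both are classical in the local theory of quaternion algebras, and the non-dyadic hypothesis is used exactly to ensure $2\alpha_0$ is a unit so that Hensel applies to $X^2 - a$.
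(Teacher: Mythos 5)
The paper does not prove this statement: it is quoted verbatim from Maclachlan--Reid \cite[Theorem 2.6.6.(b)]{MR} and used as a black box, so there is no in-paper argument to compare against. Your proof is a correct and self-contained derivation by the standard route: the norm criterion for splitting, Hensel's lemma to handle the case where $\bar a$ is a square (the non-dyadic hypothesis and $a\notin\mathfrak p$ together make $2\alpha_0$ a unit, so the lift exists), and the observation that when $\bar a$ is a nonsquare, $L(\sqrt a)/L$ is unramified, so norms have even valuation while $v_L(b)=1$. One small cosmetic point: the derivative you evaluate should be written $\bar f'(\alpha_0)$ rather than $f'(\alpha_0)$, since the computation takes place in $k$; and the first equality $v_L(N_{K/L}(\gamma))=v_K(\gamma)+v_K(\sigma\gamma)$ implicitly uses $v_K|_L=v_L$, which is exactly the unramifiedness you just established, so it is worth flagging that dependence explicitly. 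Otherwise the argument is complete, and the two cases exactly partition ``$\bar a$ a square'' versus ``$\bar a$ a nonsquare,'' giving the stated biconditional.
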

\subsection{Number Fields and Quaternion Algebras Associated to Subgroups of \texorpdfstring{$\SL{2}{\C}$}{SL2(C)}}We next turn to some background information about subgroups of $\SL{2}{\C}$. A subgroup $\Gamma$ of $\SL{2}{\C}$ is \textbf{non-elementary} if its image in $\mathrm{PSL}_2\C$ has no finite orbit in its action on $\mathbf{H}^3\cup\widehat{\C}$. Given an non-elementary subgroup $\Gamma$ of $\SL{2}{\C}$, we define its \textbf{trace field} by $k_{\Gamma} = \Q\left(\tr \gamma \mathspace | \mathspace \gamma \in \Gamma\right)$ and \textbf{quaternion algebra} by the $k_{\Gamma}$-span of elements of $\Gamma$. That is,
	\[
	A_{\Gamma} = \left\{\sum_{\text{finite}}\alpha_i \gamma_i \mathspace \big| \mathspace \alpha_i \in k_{\Gamma}, \gamma_i \in \Gamma \right\}.
	\]
	As shown in \cite[p.78]{MR}, we may write a Hilbert symbol for this quaternion algebra as
	\[
	\HilbertSymbol{\chi(g)^2-4}{\chi(g,h)-2}{k_{\Gamma}},
	\]
	where $g, h$ are noncommuting hyperbolic elements of $\Gamma$. 
	In fact, this pointwise construction extends to define a quaternion algebra over the function field of the curve. 

\begin{prop}[{\cite[Corollary 2.9]{CRS}}] \label{CRSFunctionFieldHilbertSymbol} Let $\Gamma$ be a finitely generated group, and $C$ an irreducible component of the character variety of $\Gamma$ defined over the number field $k$. Assume that $C$ contains the character of an irreducible representation, and let $g,h \in \Gamma$ be two elements such that there exists a representation $\rho$ with character $\chi_{\rho} \in C$ for which the restriction of $\rho$ to $\langle g, h \rangle$ is irreducible. Then the canonical quaternion algebra $A_{k(C)}$ is described by the Hilbert symbol
	\[
	\HilbertSymbol{I_g^2-4}{I_{[g,h]}-2}{k(C)}.	
	\] 
\end{prop}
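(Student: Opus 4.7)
The plan is to construct the canonical quaternion algebra concretely at the generic point of $C$, via the standard formula for the Hilbert symbol associated to a non-elementary subgroup of $\SL{2}{\C}$ (cf.\ \cite[Ch.~3]{MR}), applied to a tautological representation of $\Gamma$ over the function field. The calculation will then reduce, via the classical Fricke identity
\[
\tr(\rho([g,h])) = I_g^2 + I_h^2 + I_{gh}^2 - I_g\,I_h\,I_{gh} - 2,
\]
to the claimed symbol.

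First I would produce a representation $\widetilde\rho : \Gamma \to \SL{2}{K}$ for some field extension $K/k(C)$ such that $\tr(\widetilde\rho(\gamma)) = I_\gamma$ in $K$ for every $\gamma \in \Gamma$. Existence is standard given the hypothesis that $C$ contains an irreducible character: over the irreducible locus of the GIT map $R(\Gamma) \to X(\Gamma)$ the fibers are single closed $\mathrm{PGL}_2$-orbits, so a rational section exists after at worst a finite extension of $k(C)$, yielding $\widetilde\rho$ with the prescribed trace function.

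Next, set $M = \widetilde\rho(g)$, $N = \widetilde\rho(h)$ and form the trace-free parts $I_0 = M - \tfrac{I_g}{2}\,\mathrm{Id}$, $J_0 = N - \tfrac{I_h}{2}\,\mathrm{Id}$, together with $K_0 = I_0 J_0 - J_0 I_0$. Using $\det M = \det N = 1$ one computes $I_0^2 = (I_g^2-4)/4$ as a scalar in $k(C)$, and a short calculation gives $K_0 I_0 = -I_0 K_0$. Expanding $K_0^2$ and applying the Fricke identity above collapses it to $I_{[g,h]}-2$, also a scalar in $k(C)$. Thus $2I_0$ and $K_0$ are anticommuting elements of $M_2(K)$ whose squares $I_g^2-4$ and $I_{[g,h]}-2$ lie in $k(C)$. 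Irreducibility of $\widetilde\rho|_{\langle g,h\rangle}$ (a Zariski open condition on $C$, guaranteed by the hypothesis to hold at the generic point) ensures that $\{1, I_0, K_0, I_0 K_0\}$ are linearly independent over $k(C)$, so they span a $4$-dimensional central simple $k(C)$-subalgebra of $M_2(K)$ with Hilbert symbol $\HilbertSymbol{I_g^2-4}{I_{[g,h]}-2}{k(C)}$. This subalgebra coincides with $A_{k(C)}$ by definition of the latter as the $k(C)$-span of $\widetilde\rho(\Gamma)$, which contains our four generators and must itself be $4$-dimensional by the same irreducibility.

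The main obstacle I anticipate lies in the first step: producing the generic representation $\widetilde\rho$ and controlling the extension $K/k(C)$. The character variety does not in general carry a tautological representation — such a representation is only defined after inverting suitable functions and possibly passing to a quadratic extension of $k(C)$. In the irreducible regime, however, the representation is determined up to $\SL{2}{}$-conjugacy by its character, which is exactly what is needed to eliminate the cohomological obstruction to realizing $A_{k(C)}$ as an algebra over $k(C)$ even when $\widetilde\rho$ itself lives over a larger field.
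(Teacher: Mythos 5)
The paper does not give a proof of this proposition; it cites it directly as \cite[Corollary 2.9]{CRS}, so there is no internal argument here to compare against.

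Your proposal follows the standard route (the Kleinian-group computation of \cite[Ch.\ 3]{MR}, transported to the generic point of $C$) and the computational core is correct: Cayley--Hamilton gives $I_0^2 = \tfrac{I_g^2-4}{4}$, the commutator $K_0=[I_0,J_0]$ anticommutes with $I_0$ because $I_0^2$ is scalar, and expanding $K_0^2 = (\tr(I_0J_0)^2 - 4\det(I_0J_0))\mathrm{Id}$ and invoking the Fricke identity does collapse $K_0^2$ to $(I_{[g,h]}-2)\mathrm{Id}$. The one step you pass over too quickly is the final identification. Irreducibility plus Burnside gives that $\{1,I_0,K_0,I_0K_0\}$ is a $K$-basis of $M_2(K)$, hence $k(C)$-independent, so its $k(C)$-span is a genuine quaternion algebra contained in $A_{k(C)}$; but the asserted equality with $A_{k(C)}$ needs the converse inclusion, namely that every element of $\widetilde\rho(\Gamma)$ has $k(C)$-coefficients in this basis. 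That is not a consequence of irreducibility alone; it is the trace-restriction argument (the trace form on $M_2$ is nondegenerate with Gram matrix in $k(C)$ for this basis, and all the relevant traces $\tr(\widetilde\rho(\gamma)X)$ lie in $k(C)$), which is exactly the content of the $A_\Gamma$-is-a-quaternion-algebra theorem in \cite{MR} and what CRS adapts to the function field setting. With that step supplied, your generators exhibit the claimed Hilbert symbol and the proof is complete.
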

For the remainder of the section, let us specialize to the case of $K = 7_4$, $\Gamma = \pi_1(S^3 \backslash K)$, and $C$ the canonical component of the $\SL{2}{\C}$-character variety of $\Gamma$. Recall that $C$ is cut out by $R^3-R^2 Z^2+2 R^2-1$. We now give an explicit Hilbert symbol for the canonical quaternion algebra associated to $\Gamma$.
\begin{lem} \label{functionfieldhilbertsymbol} The canonical quaternion algebra over $k(C)$ is given by
	\[
	\HilbertSymbol{Z^2-4}{R-2}{k(C)}.	
	\]
	If we use the coordinate $r = R-2$, then the Hilbert symbol is given by
	\[
	\HilbertSymbol{-r^3+4r^2-4r-1}{-r}{k(C)}.
	\]
\end{lem}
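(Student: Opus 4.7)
The plan is to apply Proposition \ref{CRSFunctionFieldHilbertSymbol} with $g = a$ and $h = b$. Since $\langle a, b \rangle$ equals all of $\Gamma$, the discrete faithful representation (whose character lies on $C$) restricts to an irreducible representation on the pair, verifying the hypothesis of the proposition. From the matrix normal form for $\rho$, one reads off $\chi_{\rho}(a) = x + 1/x = Z$, so $I_g^2 - 4 = Z^2 - 4$ yields the first entry of the Hilbert symbol immediately.

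To produce the second entry, I would compute $\chi_{\rho}([a,b]) - 2$ via the Fricke identity
\begin{equation*}
\chi_{\rho}([a,b]) = \chi_{\rho}(a)^2 + \chi_{\rho}(b)^2 + \chi_{\rho}(ab)^2 - \chi_{\rho}(a)\chi_{\rho}(b)\chi_{\rho}(ab) - 2,
\end{equation*}
combined with the standard two-generator trace identity $\chi_{\rho}(ab) + \chi_{\rho}(ab^{-1}) = \chi_{\rho}(a)\chi_{\rho}(b)$, which gives $\chi_{\rho}(ab) = Z^2 - R$. Substituting $\chi_{\rho}(a) = \chi_{\rho}(b) = Z$ and expanding produces
\begin{equation*}
\chi_{\rho}([a,b]) - 2 \;=\; -(R-2)(Z^2 - R - 2).
\end{equation*}

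The decisive algebraic step is to invoke the defining equation $R^3 - R^2 Z^2 + 2R^2 - 1 = 0$ of $C$, which rearranges to $Z^2 - R - 2 = -1/R^2$ in $k(C)$. This collapses the above expression to $\chi_{\rho}([a,b]) - 2 = (R-2)/R^2$, which sits in the same square class as $R-2$ in $k(C)^{\ast}$. Proposition \ref{CRSFunctionFieldHilbertSymbol} therefore delivers the Hilbert symbol $(Z^2 - 4, R - 2)$.

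For the alternative presentation, I would rewrite the first slot modulo squares by multiplying $Z^2 - 4$ by the square $R^2$ and substituting $R^2 Z^2 = R^3 + 2R^2 - 1$, obtaining $R^3 - 2R^2 - 1$. Re-expressing in the coordinate $r$ with $R = 2 - r$ (so that $R - 2 = -r$, matching the second slot) and expanding yields $-r^3 + 4r^2 - 4r - 1$, giving the Hilbert symbol as $(-r^3 + 4r^2 - 4r - 1, -r)$. The only step with any real content is the single use of the curve equation to absorb the factor $Z^2 - R - 2$ into the square $1/R^2$; everything else reduces to routine trace calculus and polynomial manipulation, so I do not anticipate any serious obstacle.
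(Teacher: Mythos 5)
Your proposal is correct and follows essentially the same route as the paper: invoke Proposition \ref{CRSFunctionFieldHilbertSymbol} with $g=a$, $h=b$, compute $I_{ab}$ and $I_{[a,b]}$ via the trace identities, use the curve equation to put $I_{[a,b]}-2$ in the square class of $R-2$, and then clear the square $R^2$ from the first slot before changing to the $r$ coordinate. The only presentational difference is that you first factor $I_{[a,b]}-2=-(R-2)(Z^2-R-2)$ and then absorb $Z^2-R-2=-1/R^2$, whereas the paper multiplies $I_{[a,b]}-2$ by $R^2$ and expands directly; these are the same calculation.
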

\begin{proof} If we let $a,b$ be the two generators for the knot group, they satisfy the hypotheses of Proposition \ref{CRSFunctionFieldHilbertSymbol}. We then know that our Hilbert symbol is given by
	\[
	\HilbertSymbol{I_a^2-4}{I_{[a,b]} - 2}{k(C)}.
	\]
For the first term, we have that $I_a^2 - 4 = Z^2 - 4$. \par 
Then from the description of the canonical component, we have $Z^2R^2 = R^3 + 2R^2 - 1$. Since multiplying by a square doesn't affect the Hilbert symbol, we can substitute $Z^2 - 4$ with $Z^2R^2 - 4R^2$. Then,
	\[
	\begin{aligned}
	Z^2R^2 - 4R^2 &= R^3 + 2R^2 - 1 - 4R^2 \\
	&= R^3 - 2R^2 - 1.
	\end{aligned}
	\]
From Proposition \ref{charactervariety}, we may substitute the relation $R = 2-r$ to obtain
	\[
	\begin{aligned}
	R^3 - 2R^2 - 1 &= (2-r)^3 - 2(2-r)^2 - 1 \\
		&= -r^3 + 4r^2 - 4r -1.
	\end{aligned}
	\]
For the second term, we use the trace relations (\cite[p.121]{MR}):

\[
\begin{aligned}
I_{ab} &=  I_{a}I_{b} - I_{ab^{-1}} \\
&= Z^2 - R,
\end{aligned}
\]
so
\[
\begin{aligned}
I_{[a,b]} &= I_a^2 + I_b^2 + I_{ab}^2 - I_aI_bI_{ab} - 2 \\
&= 2Z^2 +\left(Z^2-R\right)^2 - Z^2\left(Z^2-R\right) - 2 \\
&= 2Z^2 - Z^2R + R^2 - 2.
\end{aligned}
\]
We can multiply $\left(2Z^2 - Z^2R + R^2 - 2\right) - 2 = 2Z^2 - Z^2R + R^2 - 4$ through by $R^2$ and use the relation from the canonical component to obtain
\[
\begin{aligned}
2Z^2R^2 - Z^2R^3 + R^4 - 4R^2 &= 2\left( R^3 + 2R^2 - 1 \right) - R\left( R^3 + 2R^2 - 1 \right) + R^4 - 4R^2 \\
&= 2R^3 + 4R^2 - 2 - R^4 - 2R^3 + R + R^4 - 4R^2 \\
&= R - 2. \\
\end{aligned}
\]
\end{proof}

Given the description of the canonical quaternion algebra over the function field $k(C)$, one may also pass back to the pointwise-defined quaternion algebras by specializing the entires of the canonical quaternion algebra to points on the curve $C$. One must pay attention to the field over which these quaternion algebras are defined however. Fortunately, the trace field and the residue field (in the sense of algebraic geometry) coincide.
\begin{lem}[{[Lemma 2.5]{CRS}}] \label{CRS:2.5} Let $C$ be an irreducible affine or projective curve defined over $\Q$. Let $\tilde{C}$ be the smooth projective completion of the normalization of the reduction of $C$. For any $z \in \tilde{C} \backslash \mathcal{I}(\tilde{C})$, let $\chi_{\rho} \in C$ be the associated character, i.e., the image of $z$ on $C$ under the rational map $\tilde{C} \rightarrow C$. Then
	\[
	k(z) = \Q\left(\tr(\rho(\gamma)) \mathspace | \mathspace \gamma \in \Gamma\right)=k_{\rho}.
	\]
is the trace field of some (hence any) representation $\rho \in R(\Gamma)$ with character $\chi_{\rho}$.	
\end{lem}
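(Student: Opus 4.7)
The plan is to show that both $k(z)$ and $k_\rho$ coincide with $\Q$ adjoined the values at $z$ of a finite generating set of trace functions. By the Culler--Shalen finite generation theorem (Proposition 1.4.1 of \cite{CullerShalen}), I fix a generating set $I_{\gamma_1},\ldots,I_{\gamma_m}$ for the ring $T$ of trace functions on $R(\Gamma)$. The $\mathrm{SL}_2$ trace identities then express every $I_\gamma$ as an integer-coefficient polynomial in these generators, so
\[ k_\rho = \Q\bigl(\tr\rho(\gamma_1),\ldots,\tr\rho(\gamma_m)\bigr). \]
These same $I_{\gamma_i}$ realize the embedding $X(\Gamma) \hookrightarrow \mathbb{A}^m$, so they serve as affine coordinates on $C$; hence at any closed point $p \in C$ the residue field is $\Q(I_{\gamma_1}(p),\ldots,I_{\gamma_m}(p))$. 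Applied to $p = \chi_\rho$, this gives $k(\chi_\rho) = k_\rho$.

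To transfer this identification from $C$ to $\tilde{C}$: since $z \notin \mathcal{I}(\tilde{C})$ by hypothesis, the rational map $\tilde{C} \dashrightarrow C$ is regular at $z$ and sends $z$ to $\chi_\rho$. The pullbacks of the $I_{\gamma_i}$ are therefore regular at $z$ with values $\tr\rho(\gamma_i) \in k(z)$, giving $k_\rho \subseteq k(z)$. The induced map of local rings simultaneously yields an embedding $k_\rho = k(\chi_\rho) \hookrightarrow k(z)$, so what remains is to upgrade this inclusion to equality. At a smooth point $\chi_\rho$ of $C$ the normalization is a local isomorphism, so $k(z) = k(\chi_\rho)$ is immediate; otherwise I would use that the function fields of $\tilde{C}$ and $C$ coincide, whence any local parameter of $\tilde{C}$ at $z$ is a rational function in the $I_{\gamma_i}$ and its value in $k(z)$ must lie in $k_\rho$.

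The main obstacle is controlling the residue-field extension at points $z \in \tilde{C}$ lying over singular points of $C$, where a priori the desingularization could introduce a nontrivial residue extension. The cleanest way to rule this out is to observe that distinct characters on $X(\Gamma)$ are separated by traces, so the intrinsic data of $z$ is already captured by the values $I_{\gamma_i}(z) = \tr\rho(\gamma_i)$; hence $k(z)$ cannot strictly contain $k_\rho$, and the desired equality follows.
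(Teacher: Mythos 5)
The paper does not prove this lemma; it is cited verbatim from \cite{CRS}, so there is no in-paper proof to compare against. Evaluating your argument on its own merits: the core is correct for points $z$ lying over smooth points of $C$. The Culler--Shalen finite generation together with the integer-coefficient $\mathrm{SL}_2$ trace identities gives $k_\rho = \Q(\tr\rho(\gamma_1),\dots,\tr\rho(\gamma_m))$, the $I_{\gamma_i}$ serve as affine coordinates on $C\subset\mathbb{A}^m$ so that $k(\chi_\rho)=\Q(I_{\gamma_1}(\chi_\rho),\dots,I_{\gamma_m}(\chi_\rho))$, and at a smooth point the normalization map is a local isomorphism, giving $k(z)=k(\chi_\rho)=k_\rho$. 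That is all the paper actually uses, as the Remark following the lemma emphasizes.

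Your handling of the singular case, however, has a genuine gap. The observation that a local parameter of $\tilde{C}$ at $z$ is a rational function in the $I_{\gamma_i}$ does not force its value to lie in $k_\rho$: a rational function $P/Q$ can be regular at $z$ with both $P$ and $Q$ vanishing at $\chi_\rho$, and then $P(z)/Q(z)\in k(z)$ need not lie in $k(\chi_\rho)$. For a concrete instance of the phenomenon you are trying to exclude, take the nodal cubic $y^2 = x^3-2x^2$ over $\Q$, with normalization $\mathbb{A}^1\to C$, $t\mapsto(t^2+2,\,t^3+2t)$. The $\Q$-rational node $(0,0)$ has residue field $\Q$, but its fiber in the normalization is the single closed point cut out by $t^2+2=0$, with residue field $\Q(\sqrt{-2})$. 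Your final paragraph does not repair this: the concern is not two distinct characters separated by traces, but two distinct geometric points $z_1\neq z_2$ of $\tilde{C}$ lying over the \emph{same} $\chi_\rho$, which have identical trace values, so the trace functions cannot tell them apart while $k(z_i)$ can strictly contain $k_\rho$. A genuine proof of the singular case must rely on how $\mathcal{I}(\tilde{C})$ is defined in \cite{CRS} (the paper never spells this out) or on representation-theoretic input ruling out this type of singularity for canonical components; it cannot come from the formal residue-field manipulations alone.
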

\begin{rem} Until Section \ref{TorsionPointsSection}, we can ignore the the normalizations, reductions, and smooth projective closures of $C$ because the canonical component is a smooth affine curve. It is not smooth at infinity, but our primary object of interest, Dehn surgery points, lie on $C$.
\end{rem}
\section{Extending Quaternion Algebras Over Function Fields to Azumaya Algebras} \label{AzumayaSection}
In this section we describe some of the algebro-geometric considerations for our problem. In particular, we explore the problem of extending a quaternion algebra defined over the function field of a scheme to an element of the Brauer group of that scheme. We begin with general discussion of Brauer groups before specializing to curves, and eventually to the canonical component coming from $7_4$.
\subsection{Brauer Groups of Schemes} For any scheme $X$, one defines the Brauer group by $\Br X = H^2_{\mathrm{\acute{e}t}}(X, \mathbf{G}_m)$, where $\mathbf{G}_m$ is the multiplicative group scheme. We will have no need for the details of \'{e}tale cohomology, and the reader may think of $X$ as a variety in this paper. Given this definition, we have an injection $\Br X \hookrightarrow \Br k(X)$ and exact sequence that describes precisely which elements of $\Br k(X)$ are in the image of this injection. We present it as it appears in \cite[Theorem 6.8.3]{QPoints}, though the result itself is due to Grothendieck and Gabber. 
	\begin{thm} \label{BrauerExactSequence}
		Let $X$ be a regular integral Noetherian scheme. Let $X^{(1)}$ be the set of codimension $1$ points of $X$. Then the sequence
		\[
		0 \rightarrow \Br X \rightarrow \Br k(X) \xrightarrow{\text{res}} \bigoplus_{x \in X^{(1)}} H^1(k(x), \Q/\Z)
		\]
		is exact with the caveat that one must exclude the $p$-primary part of all the groups if $X$ is of dimension $\leq 1$ and some $k(x)$ is imperfect of characteristic $p$, or if $X$ is of dimension $\geq 2$ and some $k(x)$ is of characteristic $p$.
	\end{thm}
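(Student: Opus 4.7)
The plan is to decompose the sequence into two separate claims—injectivity of $\Br X \hookrightarrow \Br k(X)$, and exactness at $\Br k(X)$ via the residue map—and to reduce both to purely local statements at codimension-one points, where they follow from the structure of Brauer groups of discrete valuation rings, together with a codimension-$\geq 2$ purity input for the global assembly.

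First I would construct the local residue map. Fix $x \in X^{(1)}$; then $\mathcal{O}_{X,x}$ is a discrete valuation ring with fraction field $k(X)$ and residue field $k(x)$. Writing $j\colon \Spec k(X) \to \Spec \mathcal{O}_{X,x}$ and $i\colon \Spec k(x) \to \Spec \mathcal{O}_{X,x}$ for the two natural morphisms, the short exact sequence of étale sheaves
\[
0 \to \mathbf{G}_m \to j_*\mathbf{G}_m \to i_*\Z \to 0
\]
yields, via its long exact sequence in étale cohomology and Hilbert 90 applied to $k(x)$, a local boundary map $\partial_x\colon \Br k(X) \to H^1(k(x), \Q/\Z)$ identifying $\Br \mathcal{O}_{X,x}$ with $\ker \partial_x$. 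Summing these over $x \in X^{(1)}$ gives the map $\res$ in the statement.

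For injectivity of $\Br X \to \Br k(X)$, I would run the Leray spectral sequence for $j\colon \Spec k(X) \to X$. Since $X$ is regular, Hilbert 90 applied stalk by stalk gives $R^1 j_*\mathbf{G}_m = 0$, so the low-degree terms of Leray identify $\Br X = H^2_{\mathrm{\acute{e}t}}(X,\mathbf{G}_m)$ with a subgroup of $H^2_{\mathrm{\acute{e}t}}(\Spec k(X),\mathbf{G}_m) = \Br k(X)$.

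Exactness at $\Br k(X)$ is the substantive assertion: a class $\alpha \in \Br k(X)$ extends to $\Br X$ if and only if $\partial_x \alpha = 0$ for every $x \in X^{(1)}$. For $X$ of dimension $1$ this follows directly from the local analysis above, glued via a standard Mayer--Vietoris argument on the regular Noetherian base. For higher-dimensional $X$ the key input is Gabber's absolute cohomological purity: for a regular closed immersion $i\colon Z \hookrightarrow X$ of codimension $c$ between regular Noetherian schemes, $R^q i^!\mathbf{G}_m$ vanishes in the relevant range. Feeding this into the coniveau spectral sequence converging to $H^*_{\mathrm{\acute{e}t}}(X,\mathbf{G}_m)$ shows that any Brauer class on $k(X)$ with vanishing codimension-one residues already lifts to $\Br X$, and isolates the $p$-primary caveats stated in the theorem exactly where purity fails in positive residue characteristic.

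The main obstacle is Gabber's absolute purity theorem itself, which is a deep result in étale cohomology (resolving a conjecture of Grothendieck) and which is also the source of the characteristic-$p$ exceptions in the statement. Once purity and Hilbert 90 are granted, the rest is standard spectral-sequence bookkeeping and the local DVR computation sketched above.
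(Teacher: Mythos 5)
The paper does not prove this theorem; it is quoted from Poonen's \textit{Rational Points on Varieties} (Theorem 6.8.3 there) and attributed to Grothendieck and Gabber, so there is no in-paper proof to compare against. That said, your sketch is a reasonable reconstruction of the argument that exists in the literature: the local residue maps do come from the sheaf sequence $0 \to \mathbf{G}_m \to j_*\mathbf{G}_m \to i_*\Z \to 0$ over a discrete valuation ring, injectivity of $\Br X \hookrightarrow \Br k(X)$ is a low-degree Leray term for the inclusion of the generic point and uses regularity, and exactness at $\Br k(X)$ rests on Gabber's absolute cohomological purity, which is precisely where the characteristic-$p$ caveats in the statement originate. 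Two small points of phrasing are worth flagging. First, the global assembly, even in dimension one, is usually not Mayer--Vietoris but rather the sheaf-level sequence $0 \to \mathbf{G}_m \to j_*\mathbf{G}_{m,\eta} \to \bigoplus_{x \in X^{(1)}} i_{x,*}\Z \to 0$ on all of $X$ (valid on a regular integral scheme because regular local rings are UFDs, so Weil divisors are Cartier), whose long exact sequence produces the three-term complex directly. Second, the vanishing $R^1 j_*\mathbf{G}_m = 0$ you invoke for injectivity is really a Picard-group vanishing for the fraction field of a strictly henselian regular local ring; regularity, not Hilbert 90 alone, is what makes that stalk vanish. Neither issue changes the overall shape of your argument, and the sketch correctly identifies that purity is the deep input and the source of the exceptions.
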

In the above theorem, $k(x)$ is the residue field at the point $x$, and $\mathrm{res}$ denotes the residue homomorphism into the Galois cohomology group $H^1(k(x), \Q/\Z) = H^1(\Gal(k(x)^{sep}/k(x)), \Q/\Z)$. Note that smooth varieties are regular schemes. We say that $A_{k(X)}$ ``extends" over a point $x \in X$ if the residue is trivial at $x$. This exact sequence says that $A_{k(X)}$ extends to an element of $\Br X$ if and only if it has trivial residue at every codimension $1$ point $x$ in $X$. Elements of $\Br X$ are called \textbf{Azumaya algebras}. Quaternion Azumaya algebras are Azumaya algebras that locally look like quaternion algebras. Elements of $\Br k(X)$ that do not belong to $\Br X$ are characterized in terms of their ramification sets as the following result shows.
\begin{thm}[{\cite[Theorem 1.1.(3)]{CRS}}] \label{thm:CRS-Harari} Let $\Gamma$ be a finitely generated group with $\SL{2}{\C}$ character variety $X(\Gamma)$. Let $C$ be a geometrically integral $1$-dimensional subvariety defined over $\Q$ that contains the character of an irreducible representation and write $\widetilde{C}$ for the smooth projective closure of the normalization of $C$. Finally suppose that $A_{k(C)}$ is not in the image of the canonical injection $\Br \widetilde{C} \rightarrow \Br k(C)$. Then there is no finite set of places $S$ of $\Q$ with the following property: the $k(w)$-quaternion algebra $A_{\rho} \otimes_{k_\rho}k(w)$ is unramified outside the places of $k(w)$ over $S$ for all but finitely many smooth points $w \in C(\overline{\Q})$ for which $\rho = \rho_{w}$ is absolutely irreducible.
\end{thm}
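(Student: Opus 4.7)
The plan is to convert the hypothesis that $A_{k(C)}$ does not lie in $\Br\widetilde{C}$ into the existence of a nontrivial residue at some closed point of $\widetilde{C}$, and then to invoke Harari's ``lemme formel'' from \cite{Harari94} to produce, for any finite set $S$ of places of $\Q$, infinitely many smooth points $w \in C(\overline{\Q})$ whose specialized quaternion algebra ramifies at a place of $k(w)$ lying over a rational prime outside $S$.

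By Theorem~\ref{BrauerExactSequence}, the assumption supplies a closed point $x_0 \in \widetilde{C}$ at which $\res_{x_0}(A_{k(C)}) \in H^1(k(x_0), \Q/\Z)$ is nonzero. Because $A_{k(C)}$ is $2$-torsion in $\Br k(C)$, this residue corresponds to a nontrivial quadratic character of $\Gal(k(x_0)^{\mathrm{sep}}/k(x_0))$, cut out by some quadratic extension $L/k(x_0)$. Thus $A_{k(C)}$ is Azumaya on $\widetilde{C} \setminus \{x_0\}$ but carries a genuine cohomological obstruction to extending across $x_0$; this is the local feature to exploit.

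Harari's formal lemma, in the form needed, says: for a smooth geometrically integral variety $X$ over a number field $K$, a Brauer class $\alpha \in \Br k(X)$ with nontrivial residue along some divisor, and any finite set $T$ of places of $K$, there exist a place $v \notin T$ and a point $P_v \in X(K_v)$ at which the local evaluation of $\alpha$ is nontrivial in $\Br K_v$. Applied to $X = \widetilde{C}$, $K = \Q$, $\alpha = A_{k(C)}$, and $T = S$, this produces a place $v_1 \notin S$ and a local point $P_{v_1}$; iterating with $T = S \cup \{v_1, \ldots, v_{i-1}\}$ yields an infinite sequence of distinct places $v_1, v_2, \ldots$ outside $S$, each equipped with a local point $P_{v_i}$. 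Because $C$ is smooth near each $P_{v_i}$, a $v_i$-adic approximation (Krasner/implicit function theorem) replaces $P_{v_i}$ by an algebraic point $w_i \in C(\overline{\Q})$ whose trace field $k(w_i)$, identified via Lemma~\ref{CRS:2.5}, has a place above $v_i$ at which the specialization $A_{\rho_{w_i}}$ is ramified. The infinitude of the $v_i$ then forbids any finite $S$ from containing all residue characteristics of ramification.

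The main obstacle is securing simultaneous control of two independent features: the specialization point must lie $v_i$-adically close enough to $x_0$ for the residue data to be transmitted to the fiber, and the corresponding $v_i$ must evade any prescribed finite $S$. Harari's formal lemma is precisely the density statement that supplies this simultaneous control; a Chebotarev-style argument for one fixed trace field cannot evade arbitrary finite sets of $\Q$-primes, because it only sees one specialization at a time. Once the local nontriviality is in hand, translating it into ramification of the specialized quaternion algebra over $k(w_i)$ is routine via the local invariant map and Theorem~\ref{MRRamification}.
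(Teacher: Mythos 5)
This statement is quoted from \cite{CRS} (their Theorem~1.1(3)), and the present paper does not supply a proof of it; the author simply cites the result, noting that it relies on Harari's work in \cite{Harari94}. Your proposal reconstructs the CRS argument rather than any argument that appears in this paper.

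That said, as a reconstruction your outline is essentially correct and follows the same path CRS take: first translate non-extension of the Brauer class through the residue exact sequence (Theorem~\ref{BrauerExactSequence}) into a nontrivial quadratic residue at a closed point of $\widetilde{C}$, then run Harari's formal lemma iteratively to produce local points at arbitrarily many places outside any given finite $S$ where the evaluation of $A_{k(C)}$ is nontrivial, and finally transport this to ramification of the specialized quaternion algebras. Two places where you should tighten the details before calling it a proof: (i) the version of the formal lemma you invoke requires the variety to have sufficiently many local points — for a smooth curve this holds for all but finitely many $v$ by the Weil bounds plus Hensel, but that must be stated since it is the input that makes the iteration possible; and (ii) when you pass by Krasner's lemma from a $\Q_v$-point $P_{v}$ to a nearby algebraic point $w$, you need to say why some place $\mathfrak{p}$ of $k(w)$ above $v$ has $k(w)_\mathfrak{p} \cong \Q_v$, because restriction on $\Br$ along an even-degree local extension would annihilate the $2$-torsion class and destroy the ramification you want. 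A sufficiently fine approximation in the smooth locus indeed gives a degree-one place, so this works, but it is precisely the point where a careless invocation of continuity would leave a gap. You should also note that the absolute irreducibility of $\rho_w$ for the approximating points $w$ is automatic away from the (finite) set of reducible characters on $C$, so it imposes no extra constraint.
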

\begin{rem} Both the $\SL{2}{\C}$ character variety and canonical component for $7_4$ are defined over $\Q$, and the canonical component is geometrically integral because $R^3-R^2Z^2+2R^2-1$ is irreducible even after passing to an algebraic closure. The canonical component is singular at infinity, but---as we will see in the next section---the obstructions to coming from an Azumaya algebra are residues associated to smooth affine points on the canonical component, so we will often slightly abuse notation and write $C$ in place of $\widetilde{C}$.
\end{rem}
\subsection{Quaternion Azumaya Algebras on Dimension \texorpdfstring{$1$}{1} Canonical Components} \label{subsec:dim1canonicalcomponents}	
Now let $\Gamma = \pi_1(S^3 \backslash K)$ for $K$ a hyperbolic knot. Write $C$ for the normalization of a canonical component of $\SL{2}{\C}$ character variety. Since this scheme has dimension $1$ and its residue fields have characteristic zero, we may ignore all the caveats in Theorem \ref{BrauerExactSequence}. Moreover, on a curve, the codimension $1$ points are just all the points on the curve except for the generic point. The authors of \cite{CRS} consider the question of whether the canonical quaternion algebra $A_{k(C)}$ extends over all of $C$. Essentially what they prove is that $A_{k(C)}$ always extends over the points that are characters of irreducible representations and over points at infinity. In the case of canonical components coming from knots in $S^3$, they further cast the residue condition at the characters of reducible representations in terms of the arithmetic of the Alexander polynomial. In particular
\begin{defn} \label{def:condition star} Let $K$ be a knot in $S^3$. If for each root $z$ of its Alexander polynomial in a fixed algebraic closure of $\Q$ and each square root $w$ of $z$, we have an equality of fields $\Q(w+w^{-1}) = \Q(w)$, then we say that $K$ (or its Alexander polynomial) satisfies condition $(\star)$.
\end{defn}
\begin{thm}[{\cite[Theorems 1.2, 1.4]{CRS}}] \label{thm:1.2OfCRS} Let $K$ be a hyperbolic knot with $\Gamma = \pi_1\left(S^3 \backslash K\right)$, and suppose that $\Delta_K$ satisfies condition $(\star)$. Then,
	\begin{enumerate}
		\item $A_{k(C)}$ comes from an Azumaya algebra in $\Br \tilde{C}$ where $\tilde{C}$ denotes the normalization of the projective closure of $C$. \\
		\item Furthermore, if the canonical component is defined over $\Q$, there exists a finite set $S_K$ of rational primes such that, for any hyperbolic Dehn surgery $N$ on $K$ with trace field $k_N$, the $k_N$-quaternion algebra $A_N$ can only ramify at real places of $k_N$ and finite places lying over primes in $S_K$. 
	\end{enumerate} 
\end{thm}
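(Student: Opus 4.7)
The plan is to exploit Theorem \ref{BrauerExactSequence}: the canonical quaternion algebra $A_{k(C)}$, presented by the Hilbert symbol from Proposition \ref{CRSFunctionFieldHilbertSymbol}, extends to an Azumaya algebra on $\tilde{C}$ if and only if its residue is trivial at every codimension-one point. Since $\tilde{C}$ is one-dimensional, these are the closed points, and I would handle them case by case using the tame symbol formula $\partial_v(a,b) = (-1)^{v(a)v(b)}\bar{a}^{v(b)}\bar{b}^{-v(a)}$ in $\kappa(v)^*/\kappa(v)^{*2}$.

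First, at smooth points that are characters of irreducible representations, the idea is to exploit the freedom to choose $g, h$ in the Hilbert symbol. For any irreducible character one has $I_{[g,h]} - 2 \ne 0$ for some pair $g,h$, and generically one can arrange $I_g^2 - 4$ to be a unit as well. Then both valuations vanish, so the tame symbol is trivial. This step is essentially automatic once the right pair of generators is chosen, possibly after moving to a different Hilbert symbol representative away from problematic loci.

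The main obstacle is at characters of reducible representations, which is precisely where condition $(\star)$ must come into play. For knot complements, a classical result of de Rham identifies reducible characters on the canonical component with roots of the Alexander polynomial $\Delta_K$: a root $z$ gives a reducible character on which the meridian has eigenvalues $w^{\pm 1}$ for some $w$ with $w^2 = z$. At such a character one of the trace expressions in the Hilbert symbol vanishes, making the tame symbol a priori nontrivial. I would compute this symbol explicitly in terms of $w$ and identify the residue field as $\Q(w + w^{-1})$, then show that condition $(\star)$, which imposes $\Q(w+w^{-1}) = \Q(w)$, is exactly what forces the relevant class to be a square and hence trivial in $\kappa(v)^*/\kappa(v)^{*2}$. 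Points at infinity require a parallel analysis via a local uniformizer of $\tilde{C}$, using that the ideal points arising from Culler--Shalen theory come with controlled valuation data for $I_g^2-4$ and $I_{[g,h]}-2$ that again force the tame symbol to vanish.

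Having established (1), part (2) follows by a standard spreading-out argument. Since $C$ is defined over $\Q$, pick a regular proper model $\mathcal{C} \to \Spec \Z[1/N]$ for some $N$; shrinking $\Spec \Z[1/N]$ if necessary, the Azumaya algebra of (1) extends to an Azumaya algebra $\mathcal{A}$ on $\mathcal{C}$. A hyperbolic Dehn surgery point $w \in \tilde{C}(\overline{\Q})$ corresponds, via the valuative criterion, to a section $\Spec \mathcal{O}_{k_N}[1/N'] \to \mathcal{C}$ for some $N'$ depending on $w$; the specialization $\mathcal{A}|_w$ is then an Azumaya algebra over $\mathcal{O}_{k_N}[1/N']$ and in particular unramified at all finite places of $k_N$ not lying over primes of $N \cdot N'$. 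Taking $S_K$ to consist of the primes dividing $N$ together with any primes at which the spreading-out of $\mathcal{C}$ or $\mathcal{A}$ fails gives a finite set of rational primes independent of $w$, outside of which $A_N$ can ramify only at real infinite places, as desired.
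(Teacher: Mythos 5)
This theorem is quoted verbatim from \cite{CRS} (their Theorems 1.2 and 1.4) and is not reproved in the paper; the paper only gives a one-paragraph sketch of the idea in Section \ref{subsec:dim1canonicalcomponents}. So there is no in-paper proof to match against line by line. That said, your reconstruction does follow the route the paper describes: reduce to triviality of tame-symbol residues at codimension-one points of $\tilde{C}$ via the exact sequence of Theorem \ref{BrauerExactSequence}, observe that the dangerous points are exactly the characters of reducible representations, invoke de~Rham to identify those with roots $z$ of $\Delta_K$ (with meridian eigenvalue $w$, $w^2 = z$), compute that the residue field there is $\Q(w+w^{-1})$ and that the residue class cuts out the at-most-quadratic extension $\Q(w)/\Q(w+w^{-1})$, and conclude that condition $(\star)$ is precisely the condition that the residue is a square, hence trivial. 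For part (2), spreading out the Azumaya algebra to a regular proper model over $\Spec \Z[1/N]$ and pulling back along the section produced by the valuative criterion is the right mechanism.

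Two places in your write-up are less careful than a full proof requires. First, at characters of irreducible representations where $I_g^2 - 4$ vanishes, your ``choose a different $g,h$ away from problematic loci'' is not an obvious escape: any trace-built Hilbert symbol has the same shape $\HilbertSymbol{I_g^2-4}{I_{[g,h]}-2}{k(C)}$ and its first entry always has zeros on the curve, so one genuinely has to check that the residue vanishes there; the paper notes this is the content of a separate argument in CRS (their Proposition 4.1), not something that falls out of generic position. Second, in part (2) you introduce an $N'$ depending on the Dehn surgery point $w$ and then assert $S_K$ is independent of $w$; these are in tension. In fact no $N'$ is needed: once $\mathcal{C}$ is proper over $\Spec \Z[1/N]$, the valuative criterion already extends a $k_N$-rational smooth point of the generic fiber to a section $\Spec \mathcal{O}_{k_N}[1/N] \to \mathcal{C}$, so $S_K$ can be taken to be exactly the primes dividing $N$ (together with whatever primes one inverted to make $\mathcal{A}$ spread out). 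Finally, the vanishing of the residue at ideal points is asserted rather than argued; CRS carry out an actual computation there using the Culler--Shalen structure of ideal points.
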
 
In particular, if condition ($\star$) holds for the Alexander polynomial of the knot, then $A_{k(C)}$ extends over a smooth, projective model of $C$ and is hence a quaternion Azumaya algebra. In view of the above theorem, we say that $K$, $\Delta_K(t)$, and $A_{k(C)}$ are \textbf{Azumaya positive} if condition $(\star)$ holds and \textbf{Azumaya negative} if not. \par
Let us comment on the connection between the Alexander polynomial and the question of extending $A_{k(C)}$. If $z$ is a root of the Alexander polynomial and $w$ is a square root of $z$, then condition ($\star$) says that $\Q(w+w^{-1})=\Q(w)$. In fact $\Q(w+w^{-1})$ is the residue field for the character of a reducible representation $\chi_{\rho}$, and $\Q(w)$ is the extension of $\Q(w+w^{-1})$ obtained by adjoining the residue of $A_{k(C)}$ at $\chi_{\rho}$. So if the fields are equal, the residue is trivial and the result follows. To be precise, in the case of a quaternion algebra like $A_{k(C)}$, its residue at any point $x$ belongs to the Galois cohomology group $H^1(k(x), \Z/2\Z)$. This group classifies (at most) quadratic extensions of $k(x)$ and is isomorphic to $k(x)^{*}/k(x)^{*^2}$ by Kummer theory. What is shown in \cite{CRS} is that the at most quadratic extension at the character of a reducible representation $\chi_{\rho}$ is precisely $\Q(w)/\Q(w+w^{-1})$, so if there is an equality of these fields, then the residue must be trivial. We note that their method of proof makes use of the tame symbol, which gives a relatively easy way to compute residue homomorphisms in this context. Given any pair of elements $\alpha, \beta \in k(C)$, the tame symbol of the quaternion algebra $\HilbertSymbol{\alpha}{\beta}{k(C)}$ at $x \in C$ is (see \cite[Theorem 3.1.(8)]{CRS})
	\[
	(-1)^{\ord_x(\alpha)\ord_x(\beta)}\beta^{\ord_x(\alpha)}/\alpha^{\ord_x(\beta)},
	\]
	where this is understood as an element of $k(x)^{*}/k(x)^{*^2}$. The point is that when the characteristic of $k(x)$ is not $2$, then this agrees with the residue. That is, the residue at $x$ is trivial if and only if this tame symbol represents $1$ in $k(x)^{*}/k(x)^{*^2}$. \par
	In particular, the Brauer class of
	\[
	\HilbertSymbol{I_g^2-4}{I_{[g,h]}-2}{k(C)}.	
	\]
	can only have nontrivial residue when $I_g=\pm 2$ or $I_{[g,h]}=2$. We note that $I_{[g,h]}=2$ corresponds to the character of reducible representations, and it turns out to account for all the nontrivial residues. This is proved in \cite[Proposition 4.1]{CRS}.
\subsection{Calculations for \texorpdfstring{$K = 7_4$}{K = 7 4}} \label{subsec:SevenFourAzumaya}
In this subsection our goal is to show how the residues may be calculated either directly or by using the Alexander polynomial, so let us now specialize for the remainder of the section to $K = 7_4$, $\Gamma = \pi_1(S^3 \backslash K)$, and $C$ the canonical component of the $\SL{2}{\C}$-character variety of $\Gamma$. Let us make some easy observations about $\Delta_K(t) = 4t^2 - 7t + 4$, the Alexander polynomial of $7_4$. Its roots are $(7 \pm \sqrt{-15})/8$, so the square roots $w$ of its roots are $\pm\frac{\sqrt{15}}{4}\pm\frac{i}{4}$. From this description it is clear that $\Q(w) = \Q(\sqrt{15},i)$ for each value of $w$. Also note that $w^{-1}=\overline{w}$, so $\Q(w+w^{-1}) = \Q(\sqrt{15})$ for each value of $w$. We will shortly see these fields emerge in calculating the tame symbol at characters of reducible representations. For now, note that these calculations show that $7_4$ does not satisfy condition $(\star)$. \par
Our curve $C$ is given by the vanishing of $R^3-R^2Z^2+2R^2-1$. As we mentioned at the end of the previous section all nontrivial residues occur at characters of reducible representations, that is, when $R=2$, and at such characters we compute the residue field as
	\[
	\Q[R,Z]/(R^3-R^2Z^2+2R^2-1,R-2) \cong \Q[Z]/(-4Z^2+15) \cong \Q[Z]/(Z^2-15) \cong \Q(\sqrt{15}).
	\] 
Not coincidentally, the residue field here is $\Q(\sqrt{15})$. The tame symbol becomes 
	\[
	\dfrac{1}{\left(\dfrac{\sqrt{15}}{2}\right)^2-4} = -4 = -1 \in \Q(\sqrt{15})^{\ast}/{\Q(\sqrt{15})^{\ast}}^2
	\]
Via Kummer theory, we identify $\Q(\sqrt{15})^{\ast}/{\Q(\sqrt{15})^{\ast}}^2$ with quadratic extensions of $\Q(\sqrt{15})$. So in our case, the class of $-1$ corresponds to the quadratic extension $\Q(\sqrt{15},i)/\Q(\sqrt{15})$. In view of the exact sequence in Theorem \ref{BrauerExactSequence}, this shows that $A_{k(C)}$ is not in the image of $\Br C \rightarrow \Br k(C)$. In other words $A_{k(C)}$ does not extend to an Azumaya algebra. Then Theorem \ref{thm:CRS-Harari} says that the quaternion algebras obtained by specializing $A_{k(C)}$ at points in the character variety ramify at primes lying above infinitely many distinct rational primes. However we do not know that these representations are geometrically interesting just from Harari's work. In fact this setup leaves open the possibility that there exists a finite set $S_K$ for $7_4$ as in the statement of Theorem \ref{thm:1.2OfCRS}. Our result shows that even when one restricts to points corresponding to the characters of $(d,0)$ hyperbolic Dehn surgery, there is still no such finite set $S_K$.
\section{Proof of Theorem \ref{maintheorem}} \label{maintheoremproofsection}
Our goal is to understand the specializations of the canonical quaternion algebra at Dehn surgery points. We already have a fairly explicit description by combining Lemma \ref{CRS:2.5} with Lemma \ref{functionfieldhilbertsymbol}. Indeed, we have that specifying a point $(r,Z)$ gives a field $k_{\rho}$ and a quaternion algebra over that field given by the Hilbert symbol
\begin{equation} \label{HilbertSymbolSpecialization}
\HilbertSymbol{-r^3+4r^2-4r-1}{-r}{k_{\rho}}.
\end{equation} \par
We now give a description of the ramification. It is stated purely algebraically, but in our applications the field $k$ will be the trace field of $(d,0)$ surgeries, and $r$ will be the corresponding coordinate coming from the character variety.
\begin{prop}\label{ramification1}
Let $r$ be an algebraic integer, $k$ a finite extension of $\Q$ containing $r$, and $\mathcal{O}$ the ring of integers of $k$. Let $N_{k/\Q}(r) = \pm p_1^{d_1} \cdot \cdot \cdot p_m^{d_m}$ be the prime factorization of the field norm of $r$ in $k/\Q$. For each $p_i \equiv 3 \Mod{4}$ with $d_i$ odd, there is a prime ideal $\mathfrak{p}_i  \subseteq \mathcal{O}$ containing $r$ and lying above $p_i$ such that 
\[
\HilbertSymbol{-r^3+4r^2-4r-1}{-r}{k_{\mathfrak{p}_i}}
\]
is a division algebra, where $k_{\mathfrak{p}_i}$ denotes $k$ completed at $\mathfrak{p}_i$.
\end{prop}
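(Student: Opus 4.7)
The plan is to construct a prime $\mathfrak{p}_i$ above $p_i$ with the desired properties, then apply Theorem \ref{MRRamification} directly. The argument hinges on the simple observation that the polynomial $-r^3+4r^2-4r-1$ has constant term $-1$.

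First, I would unpack the norm condition. Using $N_{k/\Q}(r) = \pm \prod_{\mathfrak{p}} N(\mathfrak{p})^{v_\mathfrak{p}(r)}$ and grouping by rational prime, one gets
\[
d_i \;=\; v_{p_i}\!\left(N_{k/\Q}(r)\right) \;=\; \sum_{\mathfrak{p} \mid p_i} f(\mathfrak{p}|p_i)\, v_\mathfrak{p}(r).
\]
Since $d_i$ is odd, at least one summand is odd; for such a summand both $v_{\mathfrak{p}_i}(r)$ and $f(\mathfrak{p}_i|p_i)$ are forced to be odd. In particular $v_{\mathfrak{p}_i}(r) \geq 1$, so $r \in \mathfrak{p}_i$, which provides the sought prime.

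Next, I would verify the hypotheses of Theorem \ref{MRRamification} for $A = \HilbertSymbol{-r^3+4r^2-4r-1}{-r}{k_{\mathfrak{p}_i}}$. Since $p_i \equiv 3 \pmod{4}$, the completion $k_{\mathfrak{p}_i}$ is non-dyadic. Because $r \in \mathfrak{p}_i$, the first slot reduces as $-r^3+4r^2-4r-1 \equiv -1 \pmod{\mathfrak{p}_i}$, so it is a unit. The second slot $-r$ has odd valuation, but possibly greater than $1$; since the Brauer class of a quaternion algebra depends only on the square classes of its entries, I can multiply $-r$ by an even power of a uniformizer (a square) to replace it with a representative in $\mathfrak{p}_i \setminus \mathfrak{p}_i^2$ without altering $A$. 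Theorem \ref{MRRamification} then reduces the problem to deciding whether $-1$ is a square in the residue field $\mathcal{O}/\mathfrak{p}_i$, which has cardinality $q = p_i^{f(\mathfrak{p}_i|p_i)}$.

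Finally, I would invoke the standard fact that $-1$ is a square in $\mathbf{F}_q$ if and only if $q \equiv 1 \pmod{4}$. With $p_i \equiv 3 \pmod{4}$ and $f(\mathfrak{p}_i|p_i)$ odd, one computes $q \equiv 3 \pmod{4}$, so $-1$ is not a square in the residue field. Hence $A$ does not split, i.e., $A$ is a division algebra, as required. The argument is essentially bookkeeping; the one step requiring a little care is the square-class reduction of $-r$ to valuation exactly $1$, since the hypothesis of Theorem \ref{MRRamification} is phrased in terms of $b \in \mathfrak{p} \setminus \mathfrak{p}^2$ rather than $b$ of odd valuation. There is no real obstacle beyond that.
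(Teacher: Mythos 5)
Your proposal is correct and follows essentially the same route as the paper's proof: find a prime $\mathfrak{p}_i \mid p_i$ with both $v_{\mathfrak{p}_i}(r)$ and the residue degree odd by examining the factorization of $N_{k/\Q}(r)$, reduce to valuation $1$ by scaling by a square power of a uniformizer, observe that the first Hilbert-symbol slot reduces to $-1$ modulo $\mathfrak{p}_i$, and conclude non-splitting from $-1$ not being a square in $\mathbf{F}_{p_i^f}$ when $p_i \equiv 3 \pmod{4}$ and $f$ is odd. The only cosmetic difference is that the paper phrases the parity argument via the ideal norm $\mathfrak{N}\bigl(\prod\mathfrak{p}_j^{g_j}\bigr)=p^{\sum g_jf_j}$, whereas you use the valuation formula $d_i=\sum_{\mathfrak{p}\mid p_i}f(\mathfrak{p}\mid p_i)\,v_\mathfrak{p}(r)$; these are the same identity.
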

The above lemma is purely algebraic, but we will apply it when $k$ is the trace field of a $(d,0)$ surgery. In this setting $r$ will specialize to the algebraic number appearing in the lower left entry of $\rho(b)$ (with the notation of Subsection \ref{subsec:character varieties}) at the character of a Dehn surgery. To apply this proposition, we write $r_d$ for a root of the polynomial obtained by specializing the character variety (with coordinates $r$ and $Z$) to $Z=2\cos(2\pi/d)$. Write $q_d(r)$ for this polynomial. Note that $(d,0)$ hyperbolic Dehn surgery points are obtained by this specialization so that $\Q(r_d, \zeta_d + \zeta_d^{-1}) = k_d$ is the trace field at the $(d,0)$ surgery. We have
\begin{equation} \label{littlersurgerycubic}
\begin{aligned}
q_d(r) = r^3 + \left(6-\zeta_d^2-\zeta_d^{-2}\right)r^2+\left(12-4\zeta_d^2-4\zeta_d^{-2}\right)r-\left(4\left(\zeta_d^2+\zeta_d^{-2}\right)-7\right).
\end{aligned}
\end{equation}
In this notation $r_d$ is a root of $q_d$ and is an algebraic integer. Of course, if $q_d(r)$ is not irreducible, then $r_d$ is not well-defined. Even when $q_d(r)$ is irreducible, $r_d$ is only defined up to Galois conjugation; however, the ramified residue characteristics will not depend on the choice of Galois conjugate. For irreducibility we have
\begin{lem}\label{irreducibility} Let $\zeta_d$ be a primitive $d$th root of unity for $d \in \Z_{\geq 1}$ odd. The polynomial $q_d(r)\in \Q\left(\zeta_d+\zeta_d^{-1}\right)[r]$ is irreducible.
\end{lem}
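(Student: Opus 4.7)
The plan is to prove that $q_d(r)$ has no root in $K_d = \Q(\zeta_d+\zeta_d^{-1})$; for a cubic, this is equivalent to irreducibility. Suppose for contradiction that $r_0 \in K_d$ is a root. The substitution $s_0 = r_0 + 2$ rewrites $q_d(r_0) = 0$ in the cleaner form
\[
s_0^{2}(s_0 - W) = 1, \qquad W = \zeta_d^{2}+\zeta_d^{-2},
\]
so both $s_0$ and $s_0 - W$ are units in $\mathcal{O}_{K_d}$.

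My first attempt would be reduction modulo a prime $\mathfrak{p}$ lying over $2$. Since $d$ is odd, $2$ is unramified in $\mathcal{O}_{K_d}$, and the residue field $\mathbb{F}_{2^{f}}$ contains a primitive $d$-th root of unity $\bar\zeta$. The reduction of the equation becomes $\bar s^{2}(\bar s + \bar W) = 1$ in characteristic two, an explicit equation one hopes has no solution in $\mathbb{F}_{2^{f}}$; when that is the case, $q_d$ must have been irreducible. For those $d$ where the mod-$2$ step is inconclusive (the smallest instance being $d = 7$), I would combine it with an archimedean norm argument: $K_d \subset \R$ forces every Galois conjugate of $s_0$ to be a real root of some specialized cubic $s^{3} - \sigma(W)s^{2} - 1 = 0$, and the constraint $N_{K_d/\Q}(s_0) = \pm 1$ is compared against the explicit product of these real roots.

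The main obstacle is stitching these two strands together into a uniform proof for every odd $d$: the archimedean step becomes delicate when some conjugate $\sigma(W)$ satisfies $\sigma(W)^{3} < -27/4$, since the specialized cubic then has three real roots rather than one, so multiple choices of conjugate for $s_0$ must simultaneously be ruled out. A more elegant uniform approach might instead leverage the identity $q_d(0) = -\zeta_d^{-2}\Delta_K(\zeta_d^{2})$ together with the fact that the Alexander polynomial $\Delta_K(t) = 4t^{2} - 7t + 4$ has no roots of unity among its roots (they lie on the unit circle but are non-cyclotomic), to constrain the divisibility of $r_0$ in $\mathcal{O}_{K_d}$ and force the contradiction directly.
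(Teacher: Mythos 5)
Your proposal is not a complete proof; it is an outline of three candidate strategies, each of which you yourself flag as having an unresolved gap, and none is carried through. The rewriting $s_0^2(s_0 - W) = 1$ with $W = \zeta_d^2 + \zeta_d^{-2}$ (so that a hypothetical root $s_0 \in \mathcal{O}_{K_d}$ and its shift $s_0 - W$ are both units) is a clean observation and genuinely different from the paper's starting point, but everything built on it trails off. The mod-$2$ strand is explicitly stated to be ``inconclusive'' for some $d$ (you name $d = 7$), and you give no criterion for when it succeeds, no proof that it succeeds often enough, and no mechanism for handling the failures. The archimedean strand is described only as comparing $N_{K_d/\Q}(s_0) = \pm 1$ against ``the explicit product of these real roots,'' with no actual inequality derived; worse, you note that when several conjugates $\sigma(W)$ give a cubic with three real roots, ``multiple choices of conjugate must simultaneously be ruled out,'' which is exactly the combinatorial difficulty you would need to resolve and do not. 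The third idea, via $q_d(0) = -\zeta_d^{-2}\Delta_K(\zeta_d^2)$ and the non-cyclotomic roots of the Alexander polynomial, is offered only as something that ``might'' work. In a referee's eyes this is a research plan, not a proof.

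For comparison, the paper's proof takes an entirely different route: it passes to $p_d(R) = R^3 - (\zeta_d^2 + \zeta_d^{-2})R^2 - 1$, observes that a root in $K_d$ would be a \emph{real cyclotomic integer}, bounds its house by about $2.21$ (Lemma \ref{cubicbound}), and then invokes the Calegari--Morrison--Snyder classification of real cyclotomic integers with house below $76/33$ to conclude that the only possible value is $(\sqrt{7}+\sqrt{3})/2$. A monotonicity/size argument rules this out for $d \geq 43$, and the finitely many remaining $d \leq 41$ are checked by computer. If you want to salvage your approach, the most promising direction is probably the unit observation combined with a careful archimedean estimate on $\prod_\sigma \sigma(s_0)$, but you would need to actually prove a bound that forces $|N_{K_d/\Q}(s_0)| > 1$ uniformly, handling the three-real-root regime; as written, that work has not been done.
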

We prove this lemma using a result of \cite{CalegariMorrisonSnyder} on real cyclotomic integers. Irreduciblity also allows us to compute the norm of $r_d$. In particular, if we let $k_d$ be the field generated by $r_d$ and $\zeta_d+\zeta_d^{-1}$ (so that $k_d$ is the trace field of the $(d,0)$ surgery), then the relative field norm $N_{k_d/\Q(\zeta_d+\zeta_d^{-1})}(r_d)$ is just the negative of the constant term of $q_d(r)$, namely $c_d := 4\left(\zeta_d^2+\zeta_d^{-2}\right)-7$. So then the absolute field norm of $r_d$ is equal to $N_{\Q(\zeta_d+\zeta_d^{-1})/\Q}(c_d)$. Then to apply Proposition \ref{ramification1}, we want to find $d$ such that the factorization of $N_{\Q(\zeta_d+\zeta_d^{-1})/\Q}(c_d)=N_{k_d/\Q}(r_d)$ contains prime divisors congruent to $3$ modulo $4$ an odd number of times. Such prime divisors imply the existence of a prime above them at which the canonical quaternion algebra is ramified by Proposition \ref{ramification1}. We summarize this as
\begin{prop} \label{rephrasedramification} Let $d \geq 3$ be an odd positive integer. Let $k_d$ and $A_d$ be respectively the trace field and the canonical quaternion algebra associated to the $(d,0)$ surgery. Let $p$ be a positive rational prime such that
\begin{enumerate}
	\item $p \equiv 3 \Mod{4}$ and
	\item $p$ divides $N_{\Q\left(\zeta_d+\zeta_d^{-1}\right)/\Q}\left(c_d\right)$ an odd number of times,
\end{enumerate}
then there is a finite place $\mathfrak{p}$ of $k_d$ lying above $p$ such that $A_d$ is ramified at $p$.
\end{prop}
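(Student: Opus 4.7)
The plan is to view Proposition \ref{rephrasedramification} as a direct translation of the algebraic criterion in Proposition \ref{ramification1} into the specific Dehn surgery setting, so the proof becomes an assembly of three earlier ingredients (the Hilbert symbol specialization \eqref{HilbertSymbolSpecialization}, the irreducibility lemma, and Proposition \ref{ramification1}) together with one short norm computation.

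First, I would fix a $(d,0)$ Dehn surgery point and recall that by Proposition \ref{charactervariety} and Lemma \ref{CRS:2.5} it corresponds to a root $r_d$ of $q_d(r)$ in $k_d = \Q(\zeta_d+\zeta_d^{-1})(r_d)$, so that, by Lemma \ref{functionfieldhilbertsymbol} and \eqref{HilbertSymbolSpecialization}, the canonical quaternion algebra $A_d$ is
\[
A_d = \HilbertSymbol{-r_d^3+4r_d^2-4r_d-1}{-r_d}{k_d}.
\]
Since $q_d$ is monic with integer-looking coefficients in $\Q(\zeta_d+\zeta_d^{-1})$, its roots are algebraic integers, so $r_d \in \mathcal{O}_{k_d}$. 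This puts us in the hypothesis setting of Proposition \ref{ramification1}.

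Second, I would compute $N_{k_d/\Q}(r_d)$. By Lemma \ref{irreducibility}, $q_d(r)$ is irreducible over $\Q(\zeta_d+\zeta_d^{-1})$, so it is the minimal polynomial of $r_d$ and $[k_d : \Q(\zeta_d+\zeta_d^{-1})] = 3$. The relative norm $N_{k_d/\Q(\zeta_d+\zeta_d^{-1})}(r_d)$ is therefore the negative of the constant term of $q_d$, which from \eqref{littlersurgerycubic} equals $c_d = 4(\zeta_d^2+\zeta_d^{-2})-7$. Taking the norm down to $\Q$ in stages then gives
\[
N_{k_d/\Q}(r_d) = N_{\Q(\zeta_d+\zeta_d^{-1})/\Q}(c_d).
\]

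Third, suppose $p$ satisfies the two hypotheses of the proposition. Then $p \equiv 3 \pmod 4$ and $p$ appears with odd exponent in the prime factorization of $N_{k_d/\Q}(r_d)$. Proposition \ref{ramification1} applied with $k=k_d$, $\mathcal{O} = \mathcal{O}_{k_d}$, and $r = r_d$ then produces a prime $\mathfrak{p}$ of $\mathcal{O}_{k_d}$ lying over $p$ and containing $r_d$ at which the Hilbert symbol $\HilbertSymbol{-r_d^3+4r_d^2-4r_d-1}{-r_d}{k_{d,\mathfrak{p}}}$ is a division algebra, i.e.\ at which $A_d$ ramifies. This is exactly the conclusion of the proposition.

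The only ``content'' here is the norm identity and the check that the specialized Hilbert symbol really is $A_d$; everything else is quotation. The genuine difficulty has been pushed into Proposition \ref{ramification1} (the algebraic ramification criterion) and Lemma \ref{irreducibility} (irreducibility of $q_d$, which relies on the Calegari--Morrison--Snyder result on real cyclotomic integers). Thus the main obstacle in proving the statement \emph{as stated} is essentially bookkeeping: tracking that the constant term of $q_d$ gives the correct norm and that the hypothesis ``$p$ divides $N_{\Q(\zeta_d+\zeta_d^{-1})/\Q}(c_d)$ an odd number of times'' is precisely the hypothesis ``$p$ appears with odd exponent in the prime factorization of $N_{k/\Q}(r)$'' needed to invoke Proposition \ref{ramification1}.
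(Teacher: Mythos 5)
Your proposal is correct and takes essentially the same route as the paper: it specializes the function-field Hilbert symbol of Lemma \ref{functionfieldhilbertsymbol} at the $(d,0)$ surgery point to get $A_d$, uses Lemma \ref{irreducibility} to identify $N_{k_d/\Q(\zeta_d+\zeta_d^{-1})}(r_d)$ with $c_d$ (hence $N_{k_d/\Q}(r_d) = N_{\Q(\zeta_d+\zeta_d^{-1})/\Q}(c_d)$ by transitivity), and then invokes Proposition \ref{ramification1}. This matches the paper's argument, which treats the proposition as an immediate corollary of those same three ingredients.
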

To apply this to proving Theorem \ref{maintheorem}, we prove that infinitely many rational primes $p$ satisfy the hypotheses of Proposition \ref{rephrasedramification}. In particular, we prove
\begin{prop} \label{primeset} Let $U$ be the set of positive rational primes with
\begin{enumerate}
	\item If $p \in U$, then $p\equiv 3 \Mod{4}$,
	\item If $p \in U$, then $p$ divides $N_{k_d/\Q}\left(r_d\right)$ for some $d \in \Z_{\geq 3}$ an odd number of times.
	\end{enumerate} 
Then $U$ is infinite.
\end{prop}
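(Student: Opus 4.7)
The plan is to recast $N_{k_d/\Q}(r_d)$ as a resultant of the $d$-th cyclotomic polynomial with the Alexander polynomial $\Delta_K(t) = 4t^2 - 7t + 4$, then invoke Theorem~\ref{thm:nonlacunary} together with quadratic reciprocity. Since $r_d$ is a root of the monic cubic $q_d(r)$ over $\Q(\zeta_d+\zeta_d^{-1})$, its relative norm equals $\pm c_d = \mp\bigl(4(\zeta_d^2+\zeta_d^{-2})-7\bigr)$; transitivity of norms then gives $N_{k_d/\Q}(r_d) = \pm N_{\Q(\zeta_d+\zeta_d^{-1})/\Q}(c_d)$. For $d$ odd the Galois automorphism of $\Q(\zeta_d+\zeta_d^{-1})/\Q$ sending $\zeta_d+\zeta_d^{-1}$ to $\zeta_d^2+\zeta_d^{-2}$ shows that $c_d$ and $4(\zeta_d+\zeta_d^{-1})-7$ are Galois conjugates, hence have equal norm. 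Squaring, extending the product from $(\Z/d\Z)^\ast/\{\pm 1\}$ to $(\Z/d\Z)^\ast$ (which squares the value), and applying the identity $4\zeta^k + 4\zeta^{-k} - 7 = \zeta^{-k}\Delta_K(\zeta^k)$ together with $\prod_{k\in(\Z/d\Z)^\ast}\zeta_d^{-k}=1$, I expect to arrive at
\[
N_{k_d/\Q}(r_d)^2 \;=\; \mathrm{Res}(\Phi_d,\,\Delta_K) \;=\; 4^{\phi(d)}\bigl|\Phi_d(\alpha)\bigr|^2,
\]
where $\alpha = (7+\sqrt{-15})/8$ is a root of $\Delta_K$ lying on the unit circle but not a root of unity (since $\Delta_K$ is not cyclotomic).

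With this reformulation in hand, I would invoke Theorem~\ref{thm:nonlacunary} applied to the integer sequence $d \mapsto \mathrm{Res}(\Phi_d,\Delta_K)$, whose non-lacunarity should follow from $\Delta_K$ not being a cyclotomic polynomial. This implies that the union over odd $d \geq 3$ of the prime divisors of $N_{k_d/\Q}(r_d)$ is infinite.

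It remains to restrict to primes $p \equiv 3 \pmod 4$ occurring with odd multiplicity. I would concentrate on primes $p$ that are both $\equiv 3\pmod 4$ and inert in $\Q(\sqrt{-15})$; quadratic reciprocity combined with Dirichlet's theorem supplies infinitely many such $p$. For each such $p$ the roots $\alpha,\bar\alpha$ of $\Delta_K$ reduce to a Frobenius-conjugate pair in $\mathbb{F}_{p^2}^\times$; taking $d$ to be (the odd part of) the multiplicative order of $\alpha$ modulo $p$, a primitive-prime-divisor argument should give $\ord_p\bigl(\mathrm{Res}(\Phi_d,\Delta_K)\bigr)=2$ in the generic case, equivalently $\ord_p(N_{k_d/\Q}(r_d))=1$, which is odd.

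The main obstacle is exactly this primitive-divisor step: ensuring that the $p$-adic valuation of the resultant is precisely $2$ rather than $4$ or higher, and simultaneously arranging that the chosen order $d$ is odd and at least $3$. In a hand-crafted approach this would require a careful analysis of the Lucas-like recurrence $v_{n+1} = 7v_n - 16v_{n-1}$ (with $v_n = (4\alpha)^n + (4\bar\alpha)^n$) together with Chebotarev-type inputs to control the $2$-part of the order of $\alpha$; the value of Theorem~\ref{thm:nonlacunary} is precisely to bundle these considerations and sidestep this delicate case analysis, as noted in the acknowledgement.
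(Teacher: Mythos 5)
Your reformulation of $N_{k_d/\Q}(r_d)$ as a resultant of $\Phi_d$ with $\Delta_K$ is sound and agrees with the paper's setup (your $\alpha$ is $\beta^2$ in the paper's notation, where $\beta=(i+\sqrt{15})/4$ is a root of $\Delta_K(x^2)=4x^4-7x^2+4$). But the middle of the argument has a genuine gap: Theorem~\ref{thm:nonlacunary} cannot be ``applied to the integer sequence $d\mapsto \mathrm{Res}(\Phi_d,\Delta_K)$.'' Furstenberg's theorem is a statement about multiplicative \emph{semigroups} $\Sigma\subseteq\Z$ acting on an irrational $x$: if $\Sigma$ is not contained in the powers of a single integer, then $\Sigma x$ is dense mod $1$. ``Non-lacunary'' there is a property of a semigroup of integers, not of an integer-valued sequence, and ``$\Delta_K$ is not cyclotomic'' has no logical connection to it. The theorem says nothing about the set of prime divisors of a sequence.

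The paper uses Furstenberg quite differently: setting $e^{2\pi i x}=\beta$ with $x$ irrational, the density of $\Sigma x$ mod $1$ (for $\Sigma$ generated by primes $\equiv 1\pmod 4$) produces an increasing sequence $n_1\mid n_2\mid\cdots$ along which the sign of $\Im(\beta^{n_i})$ alternates. Since $N_{\Q(\zeta_d+\zeta_d^{-1})/\Q}(c_d)\equiv 1\pmod 4$ for every odd $d$ (Lemma~\ref{normisonemodfour}), a sign change forces some new divisor $d_i\mid n_i$ with $\abs{N_{\Q(\zeta_{d_i}+\zeta_{d_i}^{-1})/\Q}(c_{d_i})}\equiv 3\pmod 4$, and any such integer must have a prime factor $\equiv 3\pmod 4$ with odd multiplicity. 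Lemma~\ref{primefinitelymanytimes} then guarantees these are infinitely many \emph{distinct} primes. Your alternative ``inert prime plus primitive-divisor'' sketch is a reasonable instinct, but you correctly flag that it is incomplete: you would need to control the $p$-adic valuation of the resultant, ensure that the chosen order $d$ is odd and at least $3$, and still prove infinitude of the resulting prime set (a given prime could in principle recur); those are exactly the things the sign/mod-$4$ machinery in the paper is built to handle, and Furstenberg's theorem does not ``bundle'' this primitive-divisor analysis.
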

Then Theorem \ref{maintheorem} follows by noting that $U \subseteq S$ where $U$ is as in the statement of Proposition \ref{primeset} and $S$ is as in the statement of Theorem \ref{maintheorem}.
\section{Proofs of Lemmas and Propositions} \label{proofs}
In this section we record the proofs of the lemmas appearing in the proof of Theorem \ref{maintheorem}
\subsection{Irreducibility}
First we prove Lemma \ref{irreducibility}. The key ingredient is the following result of \cite{CalegariMorrisonSnyder}.
\begin{thm}[{\cite[Theorem 1.0.5]{CalegariMorrisonSnyder}}] Let $\alpha \in \Q(\zeta)$ be a real algebraic integer in some cyclotomic extension of the rationals. Let $\house{\alpha}$ denote the largest absolute value of all conjugates of $\alpha$. If $\house{\alpha} \leq 2$, then $\house{\alpha} = 2\cos\left(\pi/n\right)$ for some integer $n$. If $2 \leq \house{\alpha} < 76/33$, then $\house{\alpha}$ is one of the following five numbers:
	\[
	\begin{aligned}
	\dfrac{\sqrt{7}+\sqrt{3}}{2} &= 2.188901059\dots, \\
	\sqrt{5} &= 2.236067977\dots, \\
	1+2\cos\left(2\pi/7\right) &= 2.246979602\dots, \\
	\dfrac{1+\sqrt{5}}{\sqrt{2}} &= 2\cos\left(\pi/20\right)+2\cos\left(9\pi/20\right) = 2.288245611\dots, \\
	\dfrac{1 + \sqrt{13}}{2} &= 2.302775637\dots
	\end{aligned}
	\]	
	\end{thm}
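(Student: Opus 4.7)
The plan is to handle the two regimes of $\house{\alpha}$ separately, reducing the first to Kronecker's theorem and tackling the exceptional list in the second via a conductor bound followed by finite enumeration.

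For the regime $\house{\alpha}\le 2$, I would introduce the auxiliary $\beta$ satisfying $\beta^2-\alpha\beta+1=0$. Since $\beta$ is integral over $\Z[\alpha]$, it is an algebraic integer; and since each Galois conjugate $\alpha'$ of $\alpha$ is real and lies in $[-2,2]$, the corresponding conjugate of $\beta$ lies on the unit circle. Kronecker's theorem forces $\beta$ to be a root of unity, say $\beta=e^{i\pi k/n}$ with $\gcd(k,n)=1$, and then the conjugates of $\alpha=\beta+\beta^{-1}$ are $2\cos(\pi k j/n)$ for $j$ ranging over an orbit of $(\Z/2n\Z)^{\times}$, whence $\house{\alpha}=2\cos(\pi/n')$ for an integer $n'$ determined by the orbit.

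For the regime $2\le\house{\alpha}<76/33$, the strategy has two stages. Stage one is a conductor bound: show that any real cyclotomic integer $\alpha$ with $\house{\alpha}<76/33$ lies in $\Q(\zeta_N)^+$ for some $N$ below an explicit numerical cutoff. The method is to expand $\alpha$ as an integer combination of roots of unity in some $\Q(\zeta_N)$ and to control the number and size of nonzero coefficients using trace inequalities of the form $\sum_{\sigma}|\sigma(\alpha)|^2\le\varphi(N)\house{\alpha}^2$, combined with Cassels--Loxton-style estimates for cyclotomic integers that exploit the arithmetic of roots of unity beyond generic size considerations. Stage two is a finite enumeration: for each admissible conductor $N$, the lattice $\Z[\zeta_N+\zeta_N^{-1}]$ intersected with the bounded region of its Minkowski embedding cut out by $\house{\alpha}<76/33$ is finite, and one checks directly that every real cyclotomic integer in this region with house at least $2$ has one of the five claimed values. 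Realization of the five values is straightforward: $\sqrt{5}$ arises in $\Q(\zeta_5)^+$, $1+2\cos(2\pi/7)$ in $\Q(\zeta_7)^+$, $(1+\sqrt{13})/2$ in $\Q(\zeta_{13})^+$, and the remaining two from $\Q(\zeta_{24})^+$ and $\Q(\zeta_{40})^+$ (or similarly small-conductor fields), with houses verified by direct computation.

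The main obstacle is the conductor bound in stage one. Naive Parseval-type inequalities alone are not sharp enough to push the cutoff to a tractable size; one must use refined arithmetic of cyclotomic integers that detects structure invisible at the level of algebraic integers in general. The delicacy is that the threshold $76/33$ is nearly sharp: perturbed values $2\cos(\pi/n)$ for large $n$ accumulate just above $2$, so the governing inequalities must be tight enough to exclude such near-misses while still admitting the five exceptional houses, which requires a careful inductive analysis of coefficient patterns in the expansion of $\alpha$.
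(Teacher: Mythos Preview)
The paper does not prove this theorem; it is quoted verbatim as \cite[Theorem~1.0.5]{CalegariMorrisonSnyder} and used as a black box in the proof of Lemma~\ref{irreducibility}. There is therefore nothing in the paper to compare your proposal against.

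For what it is worth, your outline is in the spirit of the actual Calegari--Morrison--Snyder argument: the $\house{\alpha}\le 2$ case does reduce to Kronecker, and the $2\le\house{\alpha}<76/33$ case does proceed by bounding the conductor and then enumerating. However, the conductor bound is the entire substance of that paper and is considerably more intricate than a Parseval-plus-Cassels--Loxton estimate. Their method hinges on an analysis of $\mathscr{M}(\alpha)=\lim_{n\to\infty}\bigl(\frac{1}{[\Q(\alpha):\Q]}\sum_{\sigma}|\sigma(\alpha)|^{2n}\bigr)^{1/n}$ together with a delicate case analysis on the prime factorization of the conductor, and the threshold $76/33$ is chosen precisely because it is the infimum of $\mathscr{M}$-values beyond a certain point. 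Your sketch correctly identifies this stage as the obstacle but does not supply the mechanism that actually closes it; as written it is a plausible plan rather than a proof.
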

To apply this result, it will be easier to work with $p_d(R) \eqdef q_d(R-2)$, so that
\begin{equation}\label{surgerycubic}
\begin{aligned}
p_d(R) &= R^3-R^2\left(\zeta_d+\zeta_d^{-1}\right)^2+2R^2-1  \\
&= R^3-\left(\zeta_d^2+\zeta_d^{-2}\right)R^2-1.
\end{aligned}
\end{equation}
The basic idea is to prove that any root of $p_n(R)$ must lie in an interval where there are only finitely many cyclotomic integers. 
\begin{lem}\label{cubicbound}
	Let $a \in [-2,2]$. Then the absolute value of the largest real root of $R^3-aR^2-1$ is less than $2.21$.
\end{lem}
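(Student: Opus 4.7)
The plan is to first observe that the largest real root must be positive, since $f(R) := R^3 - aR^2 - 1$ satisfies $f(0) = -1 < 0$ and $f(R) \to +\infty$. Let $R_{\max}$ denote this largest real root. Using $a \leq 2$, the equation $f(R_{\max}) = 0$ rearranges to
\[
R_{\max}^3 - 2R_{\max}^2 - 1 = (a - 2)R_{\max}^2 \leq 0,
\]
so it suffices to bound the largest positive zero of the auxiliary polynomial $g(R) := R^3 - 2R^2 - 1$.

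To bound that zero, I would compute $g'(R) = R(3R - 4)$, which shows $g$ is strictly increasing on $(4/3, \infty)$, with $g(4/3) = -59/27 < 0$. Hence $g$ has a unique real zero $R_0 > 4/3$. The key step is a numerical check at $R = 2.21$: evaluating $g(2.21) = (2.21)^2 \cdot 0.21 - 1$ and verifying it is positive places $R_0 < 2.21$. For $R_{\max}$, either $R_{\max} \leq 4/3 < 2.21$ trivially, or $R_{\max} > 4/3$ and the inequality $g(R_{\max}) \leq 0$ combined with monotonicity on $(4/3, \infty)$ forces $R_{\max} \leq R_0 < 2.21$.

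The argument reduces to a rearrangement plus a single numerical check, with no serious conceptual obstacle. The only delicacy is that the bound $2.21$ is nearly tight: the extremal root corresponds to $a = 2$ and is approximately $2.206$, so the numerical verification must be done with enough precision. For instance, observing $(2.21)^2 = 4.8841$ and $4.8841 \cdot 0.21 = 1.025661 > 1$ makes $g(2.21) > 0$ manifest by a three-digit hand computation. If one wanted to handle negative real roots as well (not required by the statement, but useful in applications), the substitution $S = -R$ turns the inequality into $S^3 - 2S^2 + 1 \leq 0$, which factors as $(S-1)(S^2 - S - 1) \leq 0$ and caps $|R|$ at the golden ratio $(1+\sqrt{5})/2 \approx 1.618$, well below $2.21$.
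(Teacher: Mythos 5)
Your proof is correct, and it is in fact more rigorous than the argument in the paper, which simply exhibits a plot of the largest real root as a function of $a$ and observes that the maximum value occurs at $a=2$ and is approximately $2.2056$. Your version supplies the reasoning the figure leaves implicit. The essential observations are the same — the extremal case is $a=2$, and the bound reduces to a numerical check on a single cubic — but where the paper relies on the reader trusting the picture, you have made it airtight: positivity of the largest real root (since $f(0)=-1$), the monotone rearrangement $R_{\max}^3 - 2R_{\max}^2 - 1 = (a-2)R_{\max}^2 \le 0$ which transfers the problem to the cubic $g(R)=R^3-2R^2-1$, the derivative computation locating $g$'s unique real zero $R_0>4/3$, and the hand check $g(2.21)=4.8841\cdot 0.21 - 1 > 0$. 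The one thing worth flagging is that you must (and do) handle the possibility $R_{\max}\le 4/3$ separately, since the monotonicity argument only runs on $(4/3,\infty)$; this is a small point but exactly the kind of detail the figure-based proof sweeps under the rug. Your side remark bounding negative real roots by the golden ratio is also correct and is a nice bonus, though not needed for the statement as written.
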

\begin{proof}
Figure \ref{fig:largestrealroot} is a graph of the absolute value of the largest real root of $x^3-ax^2-1$ pictured as a function of $a$ for $a \in [-2,2]$. \par
	\begin{figure}[h]
			\includegraphics{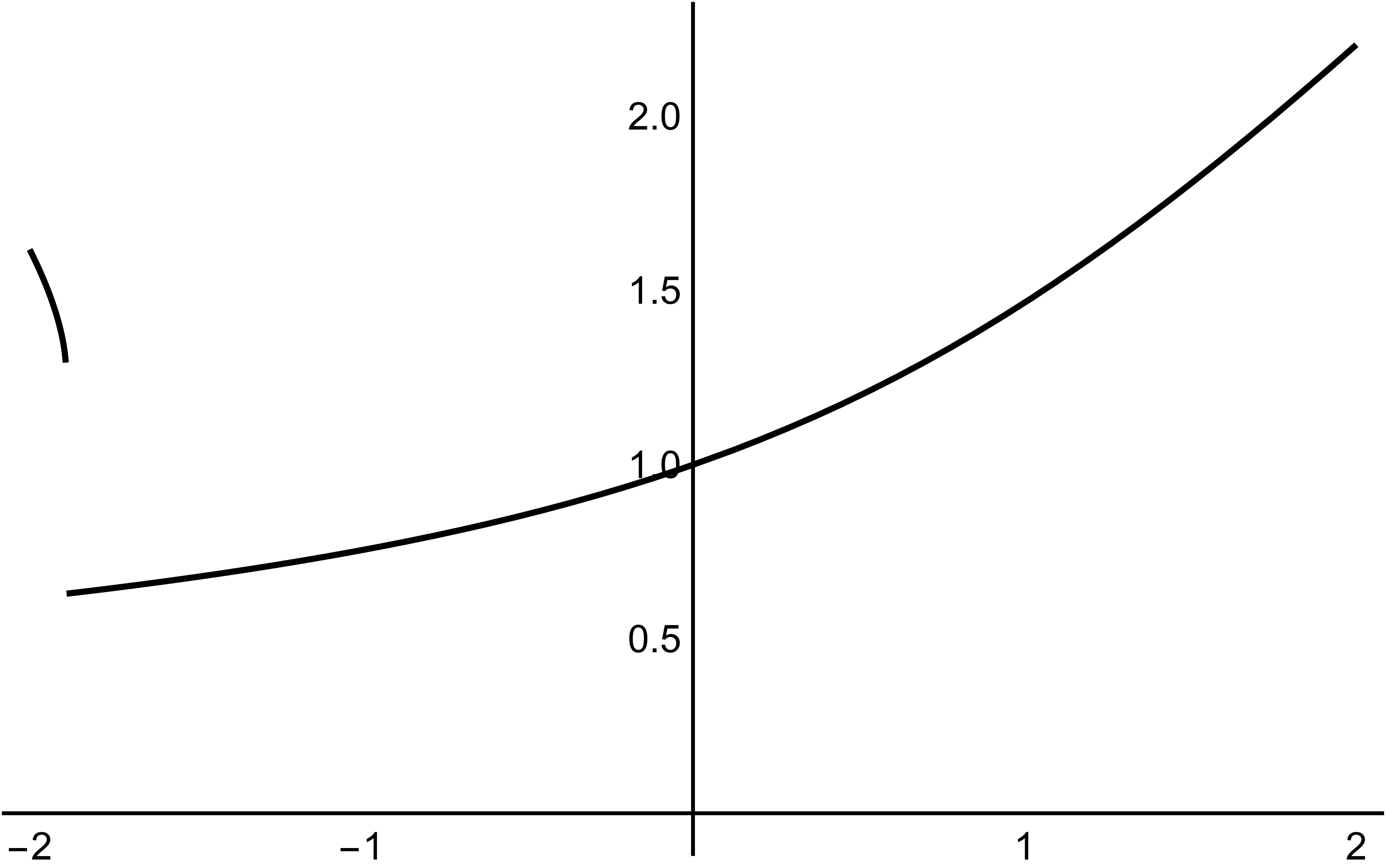}
			\caption{The largest real root of $R^3-aR^2-1$ as a function of $a \in [-2,2]$.}
			\label{fig:largestrealroot}
	\end{figure}
The right end point is the real root of $R^3-2R^2-1$ and is approximately $2.20556943040059$.
\end{proof}
\begin{rem}
The discontinuity in Figure \ref{fig:largestrealroot} comes from the fact that the discriminant is zero for $a \approx -1.88988$. For larger values of $a$ there is exactly one real root, and for smaller values there are three real roots.
\end{rem}
	Now we may prove Lemma \ref{irreducibility}.
	\begin{proof}[Proof of Lemma \ref{irreducibility}]
		Since $p_n(R)$ is of degree $3$, it suffices to show that $p_n(R)$ has no root in $\Q\left(\zeta_n+\zeta_n^{-1}\right)$. Suppose that $p_n(R)$ has a root $\alpha$. Then after Galois conjugating $p_n(R)$, we may assume it is the largest among its Galois conjugates. That is, $\alpha = \house{\alpha}$. Indeed, from Figure \ref{fig:largestrealroot}, it is clear that to obtain a root that is largest in complex absolute value among its Galois conjugates, we must choose the largest Galois conjugate of $\zeta_n^2+\zeta_n^{-2}$. This Galois conjugate is the real number $2\cos\left(4\pi/n\right)$. By Lemma \ref{cubicbound} and \cite[Theorem 1.0.5]{CalegariMorrisonSnyder}, we then have that $\alpha = \dfrac{\sqrt{7}+\sqrt{3}}{2}$. However, for $n \geq 45$, the largest real root of (the Galois conjugates of) $p_n(R)$ is greater than $\dfrac{\sqrt{7}+\sqrt{3}}{2}$. For $n = 43$, the largest real root is approximately $2.18763964834393$, which in particular is smaller than $\dfrac{\sqrt{7}+\sqrt{3}}{2}$.\par
		For $n \leq 41$, we may use a software package to verify that each of those polynomials are irreducible.
	\end{proof}
	\begin{rem} We have $p_4(R) = R^3+2R^2-1 = (R+1)(R^2+R+1)$, and $p_8(R) = R^3-1 = (R-1)(R^2+R+1)$, but $p_n(R)$ is in fact irreducible for all other even values of $n$.
	\end{rem}
\subsection{Ramification}
Next we establish the ramification behavior of the canonical quaternion algebra that we will use to produce the infinite set of primes in the statement of Theorem \ref{maintheorem}. We now prove Proposition \ref{ramification1}.
	\begin{proof}[Proof of Proposition \ref{ramification1}]
		To fix notation, let $p$ be a rational prime appearing in the factorization of $N_{k/\Q}(r)$ to an odd power, $d$. Suppose also that $p \equiv 3 \Mod{4}$. Then we know that there are prime ideals $\mathfrak{p}_1, \cdots, \mathfrak{p}_{m'}$ of $\mathcal{O}$ that $r$ belongs to. In fact, for some such prime ideal $\mathfrak{p}_i$, we have that $r$ belongs to $\mathfrak{p}_i^{g}$ but not $\mathfrak{p}_i^{g+1}$ for some odd integer $g$. To see this, first write $(r) \subseteq \mathfrak{p}_1^{g_1}\cdots\mathfrak{p}_{m'}^{g_{m'}}$, where each $\mathfrak{p}_i$ lies above $p$, $\mathfrak{p}_i \neq \mathfrak{p}_j$ if $i \neq j$,  and each power $g_i$ is maximal. Furthermore suppose that every prime ideal lying above $p$ and containing $r$ appears in this factorization. Then we take the ideal norm $\mathfrak{N}\left(\mathfrak{p}_1^{g_1}\cdots\mathfrak{p}_{m'}^{g_{m'}}\right) = p^{g_1f_1}\cdots p^{g_{m'}f_{m'}}=p^{\sum_{i=1}^{m'}g_if_i}=p^d$, where $f_i$ is the residue class degree. If each $g_if_i$ were even, then $\sum_{i=1}^{m'}g_if_i$ would be an even integer, but $d$ is odd, so there is some $i$ with $g_i$ and $f_i$ both odd. Then, we may assume after scaling $r$ by squares that $r \in \mathfrak{p}\backslash \mathfrak{p}^2$ where $\mathfrak{p}$ has odd residue class degree $f$. \par
		Now applying Theorem \ref{MRRamification}, we have that
		\[
		\HilbertSymbol{-r^3+4r^2-4r-1}{-r}{k_{\mathfrak{p}}}
		\]
		is ramified if and only if $-r^3 + 4r^2 - 4r - 1$ is not a square modulo $\mathfrak{p}$. Since $r \in \mathfrak{p}$, this is equivalent to asking whether $-1$ is a square modulo $\mathfrak{p}$. Indeed, $-1$ is not a square in the finite field $\mathbf{F}_{p^f}$ if and only if $p \equiv 3 \Mod{4}$ and $f$ is odd. This can be seen via Jacobi symbols, for example.
	\end{proof}
	Recall that Lemma \ref{functionfieldhilbertsymbol} gives a description of the quaternion algebra over the function field of the canonical component. Specializing $r$ to $r_d$ and taking the ground field to be the trace field of the $(d,0)$ surgery, we obtain
	\[
	\HilbertSymbol{-r_d^3+4r_d^2-4r_d-1}{-r_d}{k_d}.
	\] 
Moreover, $N_{\Q\left(\zeta_d+\zeta_d^{-1}\right)/\Q}\left(c_d\right)$ is the norm of $r_d$. So Proposition \ref{ramification1} says that the quaternion algebra associated to $(d,0)$ hyperbolic Dehn surgery is ramified at some prime lying above any rational prime divisor of $N_{\Q(\zeta_d+\zeta_d^{-1})/\Q}(c_d)=N_{k_d/\Q}(r_d)$ that appears to an odd power and is congruent to $3 \Mod{4}$. This observation proves Proposition \ref{rephrasedramification}. Then we are left to prove that we can actually find infinitely many distinct such rational primes as $d$ varies. This is the content of Proposition \ref{primeset}, which we now turn to proving. \par
We wish to understand when $N_{\Q\left(\zeta_d+\zeta_d^{-1}\right)/\Q}\left(c_d\right)$ has a prime divisor $p$ that is congruent to $3 \Mod{4}$ and divides $N_{\Q\left(\zeta_d+\zeta_d^{-1}\right)/\Q}\left(c_d\right)$ a strictly odd number of times. In view of Proposition \ref{rephrasedramification}, this will say that $p \in T$ where $T$ is as in the statement of Theorem \ref{maintheorem}. We will accomplish this by showing that for certain $d$ that $\abs{N_{\Q\left(\zeta_d+\zeta_d^{-1}\right)/\Q}\left(c_d\right)} \equiv 3 \Mod{4}$. The lemma we now prove basically says that $N_{\Q\left(\zeta_d+\zeta_d^{-1}\right)/\Q}\left(c_d\right)$ is always $1 \Mod{4}$.
\begin{lem}\label{normisonemodfour} 
	Let $d \geq 3$ be an odd positive integer, $\zeta_d$ a primitive $d$th root of unity, and $c_d = 4(\zeta_d^2+\zeta_d^{-2})-7$. Then $N_{\Q\left(\zeta_d+\zeta_d^{-1}\right)/\Q}\left(c_d\right) \equiv 1 \Mod{4}.$
\end{lem}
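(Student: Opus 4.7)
The plan is to reduce modulo $4$ inside the ring of algebraic integers $\mathcal{O} = \Z[\zeta_d + \zeta_d^{-1}]$ (or, if one prefers, inside $\Z[\zeta_d]$), and observe that every Galois conjugate of $c_d$ is congruent to $1$ modulo $4\mathcal{O}$, so the product is too. The norm then lies in $\Z \cap 4\mathcal{O}$, and a basic integrality argument identifies this intersection with $4\Z$.

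In more detail, I would proceed as follows. First, since $\zeta_d + \zeta_d^{-1}$ is an algebraic integer and $(\zeta_d + \zeta_d^{-1})^2 - 2 = \zeta_d^2 + \zeta_d^{-2}$, we have $c_d = 4(\zeta_d^2 + \zeta_d^{-2}) - 7 \in \mathcal{O}$. The $4$ in front of $\zeta_d^2 + \zeta_d^{-2}$ makes that whole summand lie in $4\mathcal{O}$, so
\[
c_d \equiv -7 \equiv 1 \pmod{4\mathcal{O}}.
\]
Second, the extension $\Q(\zeta_d+\zeta_d^{-1})/\Q$ is Galois, with Galois group acting by $\zeta_d + \zeta_d^{-1} \mapsto \zeta_d^k + \zeta_d^{-k}$ for $k \in (\Z/d\Z)^{*}/\{\pm 1\}$. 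Each Galois conjugate $\sigma(c_d) = 4(\zeta_d^{2k} + \zeta_d^{-2k}) - 7$ therefore has the same shape $4\beta - 7$ with $\beta \in \mathcal{O}$, so $\sigma(c_d) \equiv 1 \pmod{4\mathcal{O}}$ as well.

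Third, the norm is the product of these conjugates, so
\[
N_{\Q(\zeta_d+\zeta_d^{-1})/\Q}(c_d) \;\equiv\; 1 \pmod{4\mathcal{O}}.
\]
The final ingredient is the elementary fact that $\Z \cap 4\mathcal{O} = 4\Z$: if a rational integer $m$ equals $4\alpha$ for some $\alpha \in \mathcal{O}$, then $\alpha = m/4 \in \Q$ is a rational algebraic integer, hence $\alpha \in \Z$, and thus $4 \mid m$. Applying this to the rational integer $N_{\Q(\zeta_d+\zeta_d^{-1})/\Q}(c_d) - 1$ finishes the proof.

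The main thing to be careful about, rather than a genuine obstacle, is the last integrality point: one must check that reducing modulo $4\mathcal{O}$ inside a larger ring of integers still controls reduction modulo $4$ in $\Z$. Note that the hypothesis that $d$ is odd plays no essential role in this particular lemma; it is presumably carried over from the context where $c_d$ actually arises as the norm $N_{k_d/\Q(\zeta_d+\zeta_d^{-1})}(r_d)$, and the irreducibility statement for $q_d(r)$ relied on $d$ being odd.
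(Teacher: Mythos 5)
Your proof is correct and takes essentially the same approach as the paper: the paper's entire argument is the one-line observation that $c_d \equiv 1 \pmod 4$, so the product over Galois conjugates is as well. You have simply filled in the details that the paper leaves implicit (checking that the conjugates have the same shape $4\beta - 7$ with $\beta$ integral, and the integrality step $\Z \cap 4\mathcal{O} = 4\Z$), and your remark that oddness of $d$ is not used here is accurate.
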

	\begin{proof}Observe that $c_d \equiv 1 \Mod{4}$, so the product over the Galois conjugates is also $1 \Mod{4}$. 
	\end{proof}
Then, if we want the \textit{absolute value} of $N_{\Q\left(\zeta_d+\zeta_d^{-1}\right)/\Q}\left(c_d\right)$ to be $3\Mod{4}$, we need the norm itself to be negative. Determining exactly which $d$ make $N_{\Q\left(\zeta_d+\zeta_d^{-1}\right)/\Q}\left(c_d\right)$ negative turns out to be somewhat difficult.
\subsection{The Sign of the Norm} We start with a lemma that is visibly not about signs, but will later give us some information.
	\begin{lem} \label{absolutevaluesareequal} Let $\beta = \dfrac{i+\sqrt{15}}{4}$. Then
		\[
		\abs{\prod_{d \mid n} N_{\Q\left(\zeta_d+\zeta_d^{-1}\right)/\Q}\left(c_d\right)} = \abs{2^{n+1}\Im\left(\beta^n\right)}.
		\]
	\end{lem}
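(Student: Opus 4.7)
The plan is to collapse the product of the $N_{F_d/\Q}(c_d)$ into a single product over all $n$-th roots of unity, then exploit the fact that $|\beta| = 1$ to extract the imaginary part of $\beta^n$. Set $f(\zeta) := 4(\zeta + \zeta^{-1}) - 7$, so that $c_d = f(\zeta_d^2)$, and let $\alpha := (7+i\sqrt{15})/8$, which is a root of the Alexander polynomial $4t^2 - 7t + 4$. A direct computation gives the factorization $f(\zeta) = 4\zeta^{-1}(\zeta - \alpha)(\zeta - \alpha^{-1})$, and the identity $\alpha = \beta^2$ combined with $|\beta| = 1$ yields $\alpha^{-1} = \overline{\alpha}$.

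For each odd $d \ge 3$, the map $\zeta_d \mapsto \zeta_d^2$ is a Galois automorphism of $K_d := \Q(\zeta_d)$, so $N_{K_d/\Q}(c_d) = N_{K_d/\Q}(f(\zeta_d))$. Combined with $c_d \in F_d := \Q(\zeta_d + \zeta_d^{-1})$ and $[K_d : F_d] = 2$, this gives $N_{K_d/\Q}(c_d) = N_{F_d/\Q}(c_d)^2$. Since $N_{F_1/\Q}(c_1) = 1$, restricting to odd $n$ and using that every $n$-th root of unity is primitive for a unique $d \mid n$ yields
\[
\Bigl(\prod_{d \mid n} N_{F_d/\Q}(c_d)\Bigr)^{\!2} \;=\; \prod_{d \mid n}\prod_{k \in (\Z/d)^*} f(\zeta_d^k) \;=\; \prod_{\zeta^n = 1} f(\zeta).
\]

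Then I would expand the global product using $\prod_{\zeta^n = 1} \zeta = (-1)^{n+1}$ and $\prod_{\zeta^n = 1}(x - \zeta) = x^n - 1$ (evaluated at $x = \alpha$ and $x = \alpha^{-1}$) to obtain $\prod_{\zeta^n = 1} f(\zeta) = (-1)^{n+1} 4^n (\alpha^n - 1)(\alpha^{-n} - 1)$. Since $\alpha^{-1} = \overline{\alpha}$, the two-factor product equals $|\alpha^n - 1|^2$. Writing $\beta^n = a + bi$ with $a^2 + b^2 = 1$ and noting $\alpha^n = \beta^{2n} = (a^2-b^2) + 2abi$, a brief calculation gives $|\alpha^n - 1|^2 = 4b^2 = 4(\Im \beta^n)^2$. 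Taking absolute values and square roots of the displayed equation then yields the claim.

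The main obstacle is the Galois bookkeeping in the first step: correctly identifying that the norm from $K_d$ descends as a square to $F_d$, and that $c_d$ and $f(\zeta_d)$ are interchangeable for the purposes of norms (this is where oddness of $d$ enters crucially, since $\zeta_d \mapsto \zeta_d^2$ fails to be an automorphism in the even case). Once this setup is in place, the reduction to a product over all $n$-th roots of unity and the final algebraic manipulation on the unit circle are routine.
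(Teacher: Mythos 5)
Your proof is correct, and it follows the same underlying scheme as the paper's — collapse the product of norms over divisors into a single product over $n$-th roots of unity, then extract $\Im(\beta^n)$ by factoring over the complex numbers and exploiting $|\beta|=1$ — but with a different parametrization that is arguably cleaner. The paper computes $\res_x(x^n-1,\,4x^4-7x^2+4)$, where the quartic has roots $\pm\beta,\pm\overline\beta$, and cleverly groups the linear factors as $2(x-\beta)(x+\overline\beta)\cdot 2(x+\beta)(x-\overline\beta)$ to obtain two complex-conjugate Gaussian-integer quantities whose imaginary part gives $2^{n+1}\Im(\beta^n)$. You instead stay with the quadratic Alexander polynomial: you factor $f(\zeta)=4\zeta^{-1}(\zeta-\alpha)(\zeta-\alpha^{-1})$ with $\alpha=\beta^2$ a root of $4t^2-7t+4$ and $\alpha^{-1}=\overline\alpha$, pass from $c_d=f(\zeta_d^2)$ to $f(\zeta_d)$ via the Galois automorphism $\zeta_d\mapsto\zeta_d^2$, and then the identity $|\alpha^n-1|^2=4\,\Im(\beta^n)^2$ on the unit circle finishes it. Your route makes the connection to the Alexander polynomial and the final unit-circle step more transparent, at the price of needing $\zeta_d\mapsto\zeta_d^2$ to be an automorphism, hence $d$ (and so $n$) odd. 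That restriction is in fact also implicit in the paper's argument (the step $(-\overline\beta)^n=-\overline\beta^n$ uses $n$ odd), and the lemma as stated does fail for even $n$ (for $n=2$ the left side is $1$ while the right side is $\sqrt{15}$); this causes no harm, since the lemma is only invoked for $n\equiv 1$ modulo $4$. Both proofs are sound for the range of $n$ that matters.
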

	\begin{proof}
		We prove that
		\[
		\left(\prod_{d \mid n} N_{\Q\left(\zeta_d+\zeta_d^{-1}\right)/\Q}\left(c_d\right)\right)^2 = \left(2^{n+1}\Im\left(\beta^n\right)\right)^2.
		\]
		Consider the function $f: \Z_{\geq 1} \rightarrow \Z_{\geq 0}$ defined by $f(n)^2 = \res_x(x^n-1,4x^4-7x^2+4)$. It's not completely obvious that $f(n)^2$ is a square integer. For now, however, note that $\beta$ is a root of $4x^4-7x^2+4$. The other roots are $-\beta$ and $\pm \overline{\beta}$. By the multiplicative property of the resultant we have 
		\[
		\begin{aligned}
		f(n)^2 &= \res_x(x^n-1, 4(x-\beta)(x+\beta)(x-\overline{\beta})(x+\overline{\beta})) \\
		&= \res_x(x^n-1,2(x-\beta)(x+\overline{\beta})\res_x(x^n-1, 2(x+\beta)(x-\overline{\beta})).
		\end{aligned}
		\]
		If we write $g(n) = \res_x\left(x^n-1,2(x-\beta)(x+\overline{\beta})\right)$ and $\gamma = 2\beta$, we may compute
		\[
		\begin{aligned}
		g(n) &= \res_x\left(x^n-1,2(x-\beta)(x+\overline{\beta})\right) \\
		&=2^n\left(\beta^n-1\right)(-\overline{\beta}^n-1) \\
		&=2^n\left(\overline{\beta}^n-\beta^n\right) \\
		&= \overline{\gamma}^n-\gamma^n.
		\end{aligned}
		\]
		Note that since $\gamma$ is integral over $\Z$ (its minimal polynomial is $x^4-7x^2+16$), the above calculation shows that $g(n)$ is as well. Moreover, the fixed field of the automorphism $\Q(\beta) \rightarrow \Q(\beta)$ determined by $\gamma \mapsto -\overline{\gamma}$ is $\Q(i)$. The easiest way to see this is to note that this automorphism fixes $i$ and takes $\sqrt{15}$ to $-\sqrt{15}$. It follows that $g(n) \in \Z[i]$. We can also calculate that
		\[
		\begin{aligned}
		g(n) &= 2^n\left(\overline{\beta}^n-\beta^n\right) \\
		&= -2^{n+1}\Im(\beta^n)i.
		\end{aligned}
		\]
		It then follows that $2^{n+1}\Im(\beta^n) \in \Z$. On the hand, we can compute that the other factor of the original resultant (that is, of $f(n)^2$) is
		\[
		\begin{aligned}
		\res_x\left(x^n-1, 2(x+\beta)(x-\overline{\beta})\right) &= 2^n(-\beta^n-1)(\overline{\beta}^n-1) \\
		&= 2^{n+1}\Im(\beta^n)i.
		\end{aligned}
		\]
		It follows that $f(n)^2 = 4^{n+1}\Im(\beta^n)^2 = \left(2^{n+1}\Im(\beta^n)\right)^2$. Since $2^{n+1}\Im(\beta^n) \in \Z$, it follows that $f(n)^2$ is in fact a positive square integer, and its positive square root is given by $\pm 2^{n+1}\Im(\beta^n)$.
		\item  Writing $\Phi_d(x)$ for the $d$th cyclotomic polynomial, we have that $x^n-1 = \prod_{d \mid n} \Phi_d(x)$, so
		\[
		\begin{aligned}
		\res_x\left(x^n-1, 4x^4-7x^2+4\right) &= \prod_{d \mid n}\res_x\left(\Phi_d(x), 4x^4-7x^2+4\right) \\
		&= \prod_{d \mid n}N_{\Q(\zeta_d)/\Q}\left(4\zeta_d^4-7\zeta_d^2+4\right) \\
		\end{aligned}
		\]
		Now consider $c_d$ first as an element of $\Q(\zeta_d)$. However note that $\zeta_d^2c_d = 4\zeta_d^4-7\zeta_d^2+4$, and $N_{\Q(\zeta_d)/\Q}\left(\zeta_d\right)=1$, so $N_{\Q(\zeta_d)/\Q}\left(c_d\right) = N_{\Q(\zeta_d)/\Q}\left(4\zeta_d^4-7\zeta_d^2+4\right)$. But $N_{\Q(\zeta_d)/\Q}\left(c_d\right) = \left(N_{\Q(\zeta_d+\zeta_d^{-1})/\Q}\left(c_d\right)\right)^2$, since $\Q(\zeta_d)$ is a quadratic extension of $\Q(\zeta_d+\zeta_d^{-1})$. We summarize this as
		\[
		\res_x\left(x^n-1, 4x^4-7x^2+4\right) = \left(\prod_{d \mid n}N_{\Q(\zeta_d+\zeta_d^{-1})/\Q}\left(c_d\right)\right)^2,
		\]
		so
		\[
		f(n)^2 = \left(\prod_{d \mid n}N_{\Q(\zeta_d+\zeta_d^{-1})/\Q}\left(c_d\right)\right)^2.
		\]
	\end{proof}
	Next we determine the residue class of $2^{n+1} \Im\left(\beta^n\right)$ for odd $n$.
	\begin{lem} \label{betaresidueclasses}
		For all $n \in \Z_{\geq 1}$ odd, we have
		\[
		2^{n+1}\Im(\beta^n) \equiv \begin{cases}
		1 \Mod{4} & \text{ if } n \equiv 1 \Mod{4} \\
		3 \Mod{4} & \text{ if } n \equiv 3 \Mod{4}.
		\end{cases}	
		\]
	\end{lem}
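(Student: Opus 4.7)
The plan is to clear denominators by passing to $\gamma=2\beta$, obtain an integer linear recurrence directly from the minimal polynomial of $\gamma$, and then reduce it modulo $4$. Setting $a_n := 2^{n+1}\Im(\beta^n)$, since $\gamma^n = 2^n\beta^n$ we have the identity $a_n = 2\Im(\gamma^n)$. The preceding lemma noted that $\gamma$ has minimal polynomial $x^4 - 7x^2 + 16$ over $\Q$, and also established $a_n \in \Z$ for all $n$, so there is no integrality concern.

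From $\gamma^4 = 7\gamma^2 - 16$ (and the same equation for $\overline{\gamma}$), I get $\gamma^{n+4} = 7\gamma^{n+2} - 16\gamma^n$. Taking imaginary parts and doubling yields the $\Z$-linear recurrence
$$a_{n+4} = 7\, a_{n+2} - 16\, a_n.$$
Restricting to odd indices, I set $d_k := a_{2k+1}$, which satisfies $d_{k+1} = 7 d_k - 16 d_{k-1}$ with initial values $d_0 = a_1 = 2\Im(\gamma) = 1$ and $d_1 = a_3 = 11$ (a one-line computation from $\gamma^3 = (11i+3\sqrt{15})/2$).

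Reducing modulo $4$, the recurrence collapses to $d_{k+1} \equiv 7 d_k \equiv -d_k \pmod 4$, so $d_k \equiv (-1)^k \pmod 4$. Translating back: writing $n = 2k+1$, the case $n\equiv 1\pmod 4$ corresponds to $k$ even (giving $d_k \equiv 1\pmod 4$), and $n \equiv 3 \pmod 4$ corresponds to $k$ odd (giving $d_k \equiv -1 \equiv 3 \pmod 4$). This is exactly the statement of the lemma.

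There is no real obstacle here once the substitution $\gamma = 2\beta$ is made; the entire content of the argument is the observation that replacing $\beta$ by $\gamma$ converts the problem into one about an integer sequence whose generating recurrence has coefficients $7$ and $-16$, both of which are congruent to $-1$ and $0$ modulo $4$, producing the alternating sign pattern for free.
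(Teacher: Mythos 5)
Your proof is correct, and it uses the same key idea as the paper: pass to $\gamma = 2\beta$, which is an algebraic integer, and reduce modulo $4$. Where the paper's proof stops at ``We may compute that $i(\overline{\gamma}^n - \gamma^n) \equiv \pm 1 \pmod 4$'' and remarks that the reduction takes place in $\mathcal{O}_K/4\mathcal{O}_K$, you make the computation explicit by deriving the integer linear recurrence $a_{n+4} = 7a_{n+2} - 16a_n$ from the minimal polynomial $x^4 - 7x^2 + 16$ of $\gamma$ and observing that it collapses mod $4$ to $a_{n+4} \equiv -a_{n+2}$. I checked the details: $\gamma^4 = 7\gamma^2 - 16$, so the recurrence holds for $a_n = 2\Im(\gamma^n) = 2^{n+1}\Im(\beta^n)$; the initial values $a_1 = 1$ and $a_3 = 11$ are right; and $d_k := a_{2k+1}$ satisfies $d_{k+1} \equiv -d_k \pmod 4$, giving $d_k \equiv (-1)^k$, which translates exactly to the claimed congruence in terms of $n = 2k+1$. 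This is a clean way to supply the computation the paper omits, and it avoids having to think explicitly about the structure of $\mathcal{O}_K/4\mathcal{O}_K$.
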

	\begin{proof}
		As in the proof of Lemma \ref{absolutevaluesareequal}, if we write $\gamma = 2\beta$, then $\gamma$ is an algebraic integer, and
		$2^{n+1}\Im(\beta^n)i = \gamma^n-\overline{\gamma}^n.$ Then, we have $i\left(\overline{\gamma}^n-\gamma^n\right)=2^{n+1}\Im(\beta^n)$. We may compute that
		\[
		i\left(\overline{\gamma}^n-\gamma^n\right) \equiv \begin{cases}
		1 \Mod{4} & \text{ if } n \equiv 1 \Mod{4} \\
		3 \Mod{4} & \text{ if } n \equiv 3 \Mod{4}.
		\end{cases}	
		\]
		It's worth pointing out that this reduction is $\mathcal{O}_K \twoheadrightarrow \mathcal{O}_K/4\mathcal{O}_K$ where $\mathcal{O}_K$ is the ring of integers of the field $K=\Q(\gamma)$.
	\end{proof}
	Combining Lemmas \ref{absolutevaluesareequal}, \ref{betaresidueclasses}, and \ref{normisonemodfour} gives
	\begin{lem} \label{signsareequal}
		Let $n\geq 5$ be a positive integer such that $n\equiv1\Mod{4}$ and $\beta = \dfrac{\sqrt{15}+i}{4}.$ Then
		\begin{equation}
		\prod_{d \mid n} N_{\Q\left(\zeta_d+\zeta_d^{-1}\right)/\Q}\left(c_d\right) = 2^{n+1} \Im\left(\beta^n\right).
		\end{equation}
	\end{lem}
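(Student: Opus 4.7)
The plan is to leverage Lemma \ref{absolutevaluesareequal}, which already yields
$\bigl|\prod_{d \mid n} N_{\Q(\zeta_d+\zeta_d^{-1})/\Q}(c_d)\bigr| = \bigl|2^{n+1}\Im(\beta^n)\bigr|$.
Thus, under the hypothesis $n \equiv 1 \pmod{4}$, only the sign remains to be pinned down, and I propose to do this by reducing both sides of the target equality modulo $4$ and invoking Lemmas \ref{normisonemodfour} and \ref{betaresidueclasses}.

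For the left-hand side, since $n \geq 5$ is odd, every divisor $d$ of $n$ is odd. The trivial factor at $d=1$ is $c_1 = 4(1+1)-7 = 1$, and for each $d \geq 3$ dividing $n$, Lemma \ref{normisonemodfour} gives $N_{\Q(\zeta_d+\zeta_d^{-1})/\Q}(c_d) \equiv 1 \pmod{4}$. Multiplying over $d \mid n$, the entire product is $\equiv 1 \pmod{4}$. For the right-hand side, since $n \equiv 1 \pmod{4}$, Lemma \ref{betaresidueclasses} immediately gives $2^{n+1}\Im(\beta^n) \equiv 1 \pmod{4}$.

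To close, I would invoke the elementary observation that two integers $x, y$ satisfying $|x| = |y|$ and $x \equiv y \equiv 1 \pmod{4}$ must coincide: otherwise $x = -y$ forces $0 = x + y \equiv 2 \pmod{4}$, a contradiction. Applying this with $x$ the left-hand side and $y$ the right-hand side of the claimed identity finishes the argument. There is no substantive obstacle here; the lemma is essentially a repackaging of the three preceding results, the only bookkeeping being the separate observation that the $d=1$ factor (not covered by Lemma \ref{normisonemodfour}) contributes $1 \pmod{4}$ trivially.
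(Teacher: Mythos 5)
Your proof is correct and follows exactly the same route as the paper: cite Lemma \ref{absolutevaluesareequal} for the equality of absolute values, then use Lemmas \ref{normisonemodfour} and \ref{betaresidueclasses} to conclude both sides are $1 \pmod{4}$ and hence equal. The explicit check of the $d=1$ factor is a small, sensible addition the paper leaves implicit.
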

	\begin{proof}
		By Lemma \ref{absolutevaluesareequal}, we have an equality of absolute values. So we just have to check that the signs are equal. It suffices to check that they are both congruent to $1$ modulo $4$. Indeed,  each $N_{\Q(\zeta_d+\zeta_d^{-1})/\Q}(c_d) \equiv 1\Mod{4}$ by Lemma \ref{normisonemodfour}, and when $n\equiv 1 \Mod{4}$, so is $2^{n+1}\Im(\beta^n)$ by Lemma \ref{betaresidueclasses}. 
	\end{proof}
We are left to compute the sign of $\Im(\beta^n)$ for integers $n$. Since $\beta$ is on the unit circle in the complex plane, we may write $\beta = e^{2\pi i x}$ so that $\beta^n = e^{2\pi i nx}$. Then $\Im(\beta^n) < 0$ if and only if $\dfrac{nx}{2\pi}$ is greater than $1/2 \Mod{1}$. The following result of Furstenberg allows us to easily prove the existence of such $n$. Before stating it, we recall that a multiplicative semigroup of the integers is called \textbf{lacunary} if it consists of powers of a single integer and \textbf{non-lacunary} otherwise.
\begin{thm}[{\cite[Theorem IV.1]{FurstenbergNonlacunary}}] \label{thm:nonlacunary}
If $\Sigma$ is a non-lacunary semigroup of integers and $\alpha$ is irrational, then $\Sigma \alpha$ is dense modulo $1$.
\end{thm}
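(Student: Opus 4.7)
The plan is to deduce the density statement from the topological-dynamics rigidity theorem of Furstenberg governing jointly $\times a,\times b$-invariant closed subsets of the circle. First, since $\Sigma$ is non-lacunary, by definition it fails to be contained in $\{c^k : k \geq 0\}$ for any integer $c$, so I can extract two elements $a,b \in \Sigma$ with $\log a / \log b$ irrational; equivalently, $a$ and $b$ are multiplicatively independent.

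Next, set $K = \overline{\Sigma \alpha} \subset \R/\Z$. For each $n \in \Sigma$, the multiplication map $T_n \colon x \mapsto nx$ on $\R/\Z$ sends $\Sigma \alpha$ into itself (because $n\Sigma \subseteq \Sigma$), and hence preserves the closure $K$. Thus $K$ is a nonempty closed subset of $\R/\Z$ invariant under the commuting maps $T_a$ and $T_b$. Since $\alpha$ is irrational, the map $n \mapsto n\alpha \pmod 1$ is injective on $\Z$, and since $\Sigma$ is non-lacunary (hence infinite), the image $\Sigma \alpha$ is infinite mod $1$, so $K$ is infinite.

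Now I would invoke the key rigidity statement: for multiplicatively independent integers $a, b \geq 2$, the only infinite closed subset of $\R/\Z$ simultaneously invariant under $T_a$ and $T_b$ is $\R/\Z$ itself. Applied to $K$, this forces $K = \R/\Z$, i.e., $\Sigma \alpha$ is dense modulo $1$, which is the desired conclusion.

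The genuine obstacle is the rigidity theorem itself. The standard topological-dynamics proof proceeds by a rescaling argument: around any accumulation point of $K$ one zooms in to unit scale and extracts a limit set $\widetilde{K} \subseteq \R$; multiplicative independence of $a, b$ combined with $T_a, T_b$-invariance then forces $\widetilde{K}$ to be invariant under scaling by a dense subgroup of $\R_{>0}$, which compels $\widetilde{K} = \R$ and pulls back to $K = \R/\Z$. Converting the discrete invariance under a pair of commuting endomorphisms into continuous scale invariance is the heart of Furstenberg's argument and is where all the difficulty lies; for the purposes of the present paper it is invoked as a black box from \cite{FurstenbergNonlacunary}.
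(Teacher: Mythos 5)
The paper does not prove this statement at all: it is quoted verbatim, with its proof delegated entirely to Furstenberg's 1967 paper (cited as Theorem~IV.1 there). There is therefore no ``paper's own proof'' to compare against.

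Your reduction is nevertheless sound as far as it goes. The observation that the closure $K = \overline{\Sigma\alpha}$ is a closed infinite subset of $\R/\Z$ invariant under $T_a$ and $T_b$ is correct (infiniteness follows from irrationality of $\alpha$ together with infiniteness of $\Sigma$, and invariance from the semigroup property). The passage from ``$\Sigma$ non-lacunary'' to ``$\Sigma$ contains two multiplicatively independent elements $a,b \geq 2$'' deserves a sentence: given the paper's definition that lacunary means $\Sigma \subseteq \{c^k\}$, one checks via prime factorization that if every pair of elements of $\Sigma$ were multiplicatively dependent then all of $\Sigma$ would be powers of a common integer $c$, contradicting non-lacunarity. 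This is a standard but not entirely trivial lemma, and you assert it implicitly.

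The genuine issue is that after this reduction you invoke Furstenberg's $\times a,\times b$ rigidity theorem (infinite closed subsets of $\R/\Z$ invariant under two multiplicatively independent endomorphisms must be all of $\R/\Z$) as a black box. That rigidity statement is a companion theorem in the very same Furstenberg paper and carries essentially all of the difficulty; your final paragraph candidly acknowledges this. So what you have written is a correct outline of how Furstenberg's density theorem follows from his rigidity theorem, not an independent proof. In the context of this paper, which itself treats the entire statement as an external citation, your proposal is reasonable exposition but should not be mistaken for a self-contained argument.
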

We remark that the non-lacunary semigroups we consider are those of the form
\[
\{l_1^{r_1}l_2^{r_2}\cdots l_m^{r_m} \> | \> l_i \text{ prime, } l_i \equiv 1 \Mod{4} \}
\]
with $m \geq 2$.

\subsection{Proof of Proposition \ref{primeset}}
Let us briefly say where we are going. Recall our notation from Section \ref{maintheoremproofsection} that $r_d$ is the coordinate appearing in the Hilbert symbol for the $(d,0)$ hyperbolic Dehn surgery. Proposition \ref{rephrasedramification} reduced the ramification of the quaternion algebra to finding rational prime divisors on the norm of $r_d$, and it is technically simpler to work with $c_d = 4(\zeta_d^2+\zeta_d^{-2})-7$, which has the property that $N_{\Q(\zeta_d+\zeta_d^{-1})/\Q}\left(c_d\right)=N_{k_d/\Q}(r_d)$. Using Furstenberg's Theorem \ref{thm:nonlacunary}, we wish to construct a sequence $(d_i)$ such that the set of residue characteristics of ramified places of the $(d_i,0)$ surgeries form an infinite set. 
In view of Proposition \ref{rephrasedramification}, this amounts to finding infinitely many distinct rational prime divisors of $N_{\Q(\zeta_d+\zeta_d^{-1})/\Q}\left(c_d\right)=N_{k_d/\Q}(r_d)$ which are equivalent to $3 \Mod{4}$ and appear to an odd power in the prime factorization of the norm. Such a sequence will be constructed in Lemma \ref{divisorsequence}. \par
Our first goal is to prove
\begin{lem} \label{primefinitelymanytimes} Let $p$ be a rational prime. Then, $p$ divides $N_{\Q(\zeta_d+\zeta_d^{-1})/\Q}\left(c_d\right)=N_{k_d/\Q}(r_d)$ for only finitely many values of $d$ coprime to $p$.
\end{lem}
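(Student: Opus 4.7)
The plan is to translate divisibility of the norm $N_{\Q(\zeta_d+\zeta_d^{-1})/\Q}(c_d)$ by $p$ into a statement about roots of the Alexander polynomial $\Delta_K(y) = 4y^2 - 7y + 4$ modulo $p$, and then exploit the fact that a fixed polynomial has only finitely many roots in $\overline{\mathbf{F}_p}$, each of fixed multiplicative order. The crucial observation, which makes everything go through, is that $\zeta_d^2 \cdot c_d = 4\zeta_d^4 - 7\zeta_d^2 + 4 = \Delta_K(\zeta_d^2)$.

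First, I would rephrase divisibility: $p \mid N_{\Q(\zeta_d+\zeta_d^{-1})/\Q}(c_d)$ if and only if $c_d$ lies in some prime $\mathfrak{p}$ of $\Z[\zeta_d+\zeta_d^{-1}]$ over $p$, which in turn is equivalent to $c_d$ lying in some prime $\mathfrak{P}$ of $\Z[\zeta_d]$ over $p$. Since $\zeta_d$ is a unit, this condition is equivalent to $\Delta_K(\zeta_d^2) \in \mathfrak{P}$, i.e.\ the image of $\zeta_d^2$ in the residue field $\kappa(\mathfrak{P})$ is a root of $\Delta_K$. Because $\gcd(d,p) = 1$, the polynomial $y^d - 1$ is separable modulo $p$, so the reduction map is injective on the group of $d$-th roots of unity. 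Hence the order of the image of $\zeta_d^2$ in $\kappa(\mathfrak{P})^\ast$ equals its order in $\Z[\zeta_d]$, namely $d/\gcd(d,2)$.

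On the other hand, $\Delta_K$ has at most two roots in $\overline{\mathbf{F}_p}$, each lying in $\mathbf{F}_p$ or $\mathbf{F}_{p^2}$ and thus having some fixed finite multiplicative order; call these orders $N_1$ and $N_2$. The previous paragraph forces $d/\gcd(d,2) \in \{N_1, N_2\}$, an equation with only finitely many solutions $d$, which is the desired conclusion. The case $p = 2$ is handled separately and is immediate: $c_d \equiv -7 \equiv 1 \pmod{2}$ in $\Z[\zeta_d+\zeta_d^{-1}]$, so $N_{\Q(\zeta_d+\zeta_d^{-1})/\Q}(c_d)$ is odd and no $d$ coprime to $2$ contributes. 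I do not anticipate a substantive obstacle; the main content is the identification of $\Delta_K$ inside $\zeta_d^2 \cdot c_d$, after which the argument is a short exercise in the reduction of cyclotomic integers modulo primes of good reduction.
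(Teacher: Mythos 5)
Your argument is correct and follows essentially the same route as the paper: both hinge on the observation that, for a prime $\mathfrak{P}$ of $\Z[\zeta_d]$ above $p$ containing $c_d$, the image of $\zeta_d^2$ in the residue field is a root of the fixed quadratic $\Delta_K \bmod p$. Where the paper's Lemma \ref{pm1modl} deduces from this that the residue degree, and hence the multiplicative order of $p$ modulo $d$, is at most $2$ (so $d \mid p^2-1$), you instead use separability of $y^d-1$ over $\mathbf{F}_p$ to conclude that $d/\gcd(d,2)$ must equal one of the at most two multiplicative orders of roots of $\Delta_K$ in $\overline{\mathbf{F}_p}$ --- a cosmetically different, slightly sharper, but essentially equivalent endgame.
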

Let us recall a fact from basic number theory. See, e.g., \cite[Proposition 1.10.3]{NeukirchANT}. 
\begin{prop}\label{finitedegrees} Let $p$ be a rational prime and $d$ an integer such that $p \nmid d$. Let $\mathbf{F}_p(\zeta_d)$ be the field obtained by adjoining a primitive $d$th root of unity to the finite field with $p$ elements, $\mathbf{F}_p$. Then this extension is cyclic of degree equal to the multiplicative order of $p \Mod{d}$.
\end{prop}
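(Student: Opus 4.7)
The plan is to deduce this from two standard facts about finite fields: every finite extension of $\mathbf{F}_p$ has the form $\mathbf{F}_{p^f}$ and is Galois with cyclic Galois group generated by the Frobenius $x \mapsto x^p$; and the multiplicative group $\mathbf{F}_{p^f}^*$ is cyclic of order $p^f - 1$.

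First I would observe that since $p \nmid d$, the polynomial $x^d - 1 \in \mathbf{F}_p[x]$ is separable (its derivative $d \cdot x^{d-1}$ is a nonzero multiple of $x^{d-1}$, which shares no root with $x^d - 1$), so it has $d$ distinct roots in $\overline{\mathbf{F}_p}$, exactly $\phi(d)$ of which are primitive $d$-th roots of unity. Pick any such $\zeta_d$, and let $n$ denote the multiplicative order of $p$ modulo $d$.

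The key step is then to identify the smallest $f \geq 1$ for which $\mathbf{F}_{p^f}$ contains $\zeta_d$. Because $\mathbf{F}_{p^f}^*$ is cyclic of order $p^f - 1$, it contains an element of order $d$ if and only if $d \mid p^f - 1$, i.e., if and only if $p^f \equiv 1 \pmod{d}$, i.e., if and only if $n \mid f$. The minimal such $f$ is $n$ itself, so $\mathbf{F}_p(\zeta_d) = \mathbf{F}_{p^n}$, and cyclicity over $\mathbf{F}_p$ is immediate from the first standard fact above, giving the degree $n = \mathrm{ord}_d(p)$ as claimed. There is no genuine obstacle; the only place requiring care is the use of the hypothesis $p \nmid d$ to guarantee, via separability of $x^d - 1$, that a primitive $d$-th root of unity exists in characteristic $p$ to begin with.
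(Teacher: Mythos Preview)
Your argument is correct and is the standard proof of this classical fact. The paper does not actually supply a proof of this proposition; it simply states it as ``a fact from basic number theory'' with a reference to \cite[Proposition 1.10.3]{NeukirchANT}, so there is nothing to compare against beyond noting that your proof is essentially what one finds in that reference.
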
 
The next lemma follows from well-known facts, but we include a proof for completeness.
\begin{lem}\label{pm1modl} Let $d \in \Z_{\geq 3}$ be odd and $\zeta_d$ a primitive $d$th root of unity. Then the prime divisors of $N_{\Q(\zeta_d)/\Q}(c_d)$ not dividing $d$ have multiplicative order modulo $d$ equal to $1$ or $2$.
\end{lem}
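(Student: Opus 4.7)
The plan is to convert the divisibility condition $p \mid N_{\Q(\zeta_d)/\Q}(c_d)$ into a condition about roots of the Alexander polynomial $\Delta_K(y) = 4y^2 - 7y + 4$ in a finite residue field, and then read off the degree of that residue field via Proposition \ref{finitedegrees}.

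First I would reformulate the hypothesis. Given $p \nmid d$, the prime $p$ divides $N_{\Q(\zeta_d)/\Q}(c_d)$ exactly when there is a prime $\mathfrak{p}$ of $\Q(\zeta_d)$ above $p$ with $c_d \in \mathfrak{p}$. Working in the residue field $\mathbf{F}_p(\bar\zeta_d)$, this says $4(\bar\zeta_d^2 + \bar\zeta_d^{-2}) = 7$. Since $p \nmid d$, the reduction $\bar\zeta_d$ is a unit (indeed a primitive $d$th root of unity), so multiplying through by $\bar\zeta_d^2$ gives $4\bar\zeta_d^4 - 7\bar\zeta_d^2 + 4 = 0$. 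That is, $\bar\zeta_d^2$ is a root of $\Delta_K(y)$ in the residue field.

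Next I would observe that all roots of $\Delta_K(y) = 4y^2 - 7y + 4$ in $\overline{\mathbf{F}_p}$ lie in $\mathbf{F}_{p^2}$. For odd $p$ this is visible from the formula $\tfrac{7 \pm \sqrt{-15}}{8}$: if $-15$ is a nonzero square modulo $p$ the roots are in $\mathbf{F}_p$; if not (or if $p \in \{3,5\}$, where the discriminant vanishes) the splitting field still has degree at most $2$. The prime $p = 2$ cannot occur at all, since $c_d \equiv -7 \equiv 1 \pmod 2$ and hence $2 \nmid N_{\Q(\zeta_d)/\Q}(c_d)$. Consequently $\bar\zeta_d^2 \in \mathbf{F}_{p^2}$ whenever $p$ divides the norm.

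Finally I would use that $d$ is odd. Because $\gcd(2,d) = 1$, some integer $k$ satisfies $2k \equiv 1 \pmod d$, so $\bar\zeta_d = (\bar\zeta_d^2)^k$. Therefore $\mathbf{F}_p(\bar\zeta_d) = \mathbf{F}_p(\bar\zeta_d^2) \subseteq \mathbf{F}_{p^2}$, and Proposition \ref{finitedegrees} identifies $[\mathbf{F}_p(\bar\zeta_d):\mathbf{F}_p]$ with $\ord_d(p)$. Hence $\ord_d(p) \in \{1,2\}$, as claimed.

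There is no real obstacle in this argument; the only delicate point is to confirm that $p = 2$ never occurs so that the denominator $8$ in the root formula is harmless, and to keep track of the fact that $\mathbf{F}_p(\bar\zeta_d^2) = \mathbf{F}_p(\bar\zeta_d)$ requires the oddness of $d$. Everything else is a direct application of the relation between $c_d$ and $\Delta_K$ together with the standard description of residue fields of cyclotomic integers.
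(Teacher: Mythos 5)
Your proof is correct and takes essentially the same route as the paper's. The only cosmetic difference is in how the $\zeta_d$ versus $\zeta_d^2$ issue is handled: the paper first replaces $c_d = \Delta_K(\zeta_d^2)$ by its Galois conjugate $\Delta_K(\zeta_d)$ (valid since $d$ is odd, so $\zeta_d \mapsto \zeta_d^2$ is an automorphism) and then concludes directly that $\{1, \bar\zeta_d, \bar\zeta_d^2\}$ is $\mathbf{F}_p$-linearly dependent; you instead keep $\bar\zeta_d^2$ as the root of $\Delta_K$ downstairs and observe that $\mathbf{F}_p(\bar\zeta_d^2) = \mathbf{F}_p(\bar\zeta_d)$ because $2$ is invertible mod $d$. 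These are the same use of the oddness of $d$ transported to a different level. You are more explicit than the paper about the exclusion of $p=2$ (the paper implicitly relies on $\gcd(4,7)=1$ making the linear combination nontrivial at every prime), which is a welcome clarification rather than a new idea.
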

\begin{proof}
	Let $\mathfrak{p}$ be a prime ideal of $\Q(\zeta_d)$ lying above the rational prime $p$ such that $c_d=4\zeta_d^4-7\zeta_d^2+4$ belongs to $\mathfrak{p}$. Note that since $d$ is odd, $c_d$ is Galois conjugate in $\Q(\zeta_d)/\Q$ to $4\zeta_d^2-7\zeta_d+4$, so it suffices to show the lemma for this latter algebraic integer. Also suppose that $p \nmid d$. Consider the reduction map $\Z[\zeta_d] \rightarrow \Z[\zeta_d]/\mathfrak{p} \cong \mathbf{F}_p(\zeta_d).$ Note that this reduction takes $d$th roots of unity of $\Z[\zeta_d]$ bijectively onto $d$th roots of unity of $\mathbf{F}_p(\zeta_d)$ hence primitive $d$th roots of unity remain primitive. By assumption, $4\zeta_d^2-7\zeta_d+4$ is in the kernel of this map. Henceforth we write $\zeta_d$ for the image of $\zeta_d \in \Z(\zeta_d)$ under this reduction map. That is, $4\zeta_d^2-7\zeta_d+4 = 0$ in $\mathbf{F}_p(\zeta_d)$. This implies that $\{1, \zeta_d, \zeta_d^2\}$ is linearly dependent over $\mathbf{F}_p$. Since $\{1, \zeta_d, \zeta_d^2, \dots, \zeta_d^{m-1}\}$ is an $\mathbf{F}_p$-basis for $\mathbf{F}_p(\zeta_d)$ where $m$ is the degree of the extension $\mathbf{F}_p(\zeta_d)/\mathbf{F}_p$, we have that $m \leq 2$. Then by Proposition \ref{finitedegrees}, the multiplicative order of $p \Mod{d}$ is either $1$ or $2$.
\end{proof}
We may now easily prove Lemma \ref{primefinitelymanytimes}.
\begin{proof}[Proof of Lemma \ref{primefinitelymanytimes}]
	Any prime divisor $p \in \Z_{\geq 2}$ of $N_{\Q(\zeta_d+\zeta_d^{-1})}(c_d)$ is either divides $d$ itself or has multiplicative order equal to $1$ or $2$ modulo $d$. Any prime $p$ has multiplicative order modulo $d$ equal to $1$ or $2$ for only finitely many values of $d$ (e.g. take $d > p^2$).  We conclude that a given prime $p$ divides $N_{\Q(\zeta_d+\zeta_d^{-1})}(c_d)$ for finitely many values of $d$.
\end{proof}
\begin{rem} The sequence of $(d,0)$ surgeries we construct have $d$ only divisible by primes congruent to $1\Mod{4}$, but the ramified primes we find are all $-1\Mod{4}$, so there is no issue of finding the same prime infinitely often as a divisor of the surgery coefficients.
\end{rem}
\begin{lem}\label{auxilarysequnce} Let $\Sigma$ be a non-lacunary semigroup of integers of the form $\{l_1^{r_1}l_2^{r_2}\cdots l_m^{r_m} \> | \> l_i \text{ prime, } l_i \equiv 1 \Mod{4} \}$. Then there exists a sequence $\left(n_i\right)_{i=1}^{\infty}$ of positive integers such that
	\begin{enumerate}
		\item Each $n_i$ is divisible only by the primes $\{l_j\}$ appearing in $\Sigma$,
		\item If $j > i$, $n_i \mid n_j$.
		\item If $i \neq j$, then $n_i \neq n_j$,
		\item If $i$ is even, then $2^{n_i+1}\Im\left(\beta^{n_i}\right) > 0$, and
		\item If $i$ is odd, then $2^{n_i+1}\Im\left(\beta^{n_i}\right) < 0$.
	\end{enumerate}
\end{lem}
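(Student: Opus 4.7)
The plan is to build the sequence $(n_i)$ inductively, using Furstenberg's Theorem \ref{thm:nonlacunary} at each step to choose the next multiplier from $\Sigma$ so as to place $n_{i+1} x \bmod 1$ into the correct half of $(0,1)$, where $\beta = e^{2\pi i x}$.

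First I would check that $\beta = (i+\sqrt{15})/4$ lies on the unit circle (since $|\beta|^2 = (1+15)/16 = 1$) and that it is not a root of unity. Indeed, $\beta$ satisfies $4x^4 - 7x^2 + 4 = 0$, and the monic form $x^4 - (7/4)x^2 + 1$ has non-integer coefficients, so $\beta$ is not an algebraic integer and hence not a root of unity. Writing $\beta = e^{2\pi i x}$, this shows $x$ is irrational. Moreover $\Im(\beta^n) = \sin(2\pi n x)$ is positive (respectively negative) precisely when $nx \bmod 1$ lies in $(0,1/2)$ (respectively in $(1/2,1)$), and since $2^{n+1}>0$, this alone controls the sign of $2^{n+1}\Im(\beta^n)$.

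For the induction, suppose $n_1 \mid n_2 \mid \cdots \mid n_k$ have been chosen satisfying (1)--(5) up to index $k$. Let $y_k = n_k x \bmod 1$; since $n_k \neq 0$ and $x$ is irrational, $y_k$ is irrational. Because $\Sigma$ is non-lacunary by hypothesis, Furstenberg's theorem gives that $\Sigma \cdot y_k$ is dense modulo $1$. So I can find $s \in \Sigma$ with $s > 1$ and $s y_k \bmod 1$ lying in the appropriate open half-interval of $(0,1)$: in $(0,1/2)$ when $k+1$ is even and in $(1/2,1)$ when $k+1$ is odd. Set $n_{k+1} = n_k s$. Then (1) holds because both factors use only primes in $\Sigma$; (2) is immediate; (3) follows from $s > 1$; and (4) or (5) holds because $n_{k+1} x \equiv s y_k \pmod 1$ lies in the chosen interval. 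To start the recursion, pick any $n_1 \in \Sigma$ with $n_1 > 1$ and $n_1 x \bmod 1$ in $(1/2, 1)$, which is possible by the same density argument applied to $x$ itself.

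I expect the only substantive point to be the irrationality of $x$, i.e., that $\beta$ is not a root of unity; beyond that, Furstenberg's theorem and the bookkeeping of the induction make the construction essentially automatic, since each $y_k$ remains irrational and so density in $[0,1)$ is preserved at every stage.
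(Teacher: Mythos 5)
Your proposal is correct and follows essentially the same approach as the paper: iteratively apply Furstenberg's density theorem to $\Sigma \cdot (n_k x) \bmod 1$, choosing a multiplier $s \in \Sigma$ at each step to steer $n_{k+1}x \bmod 1$ into the required half-interval, so that divisibility $n_k \mid n_{k+1}$ is automatic. The only cosmetic difference is in establishing irrationality of $x$: you deduce it from $\beta$ not being an algebraic integer (and hence not a root of unity), whereas the paper simply asserts it with a parenthetical appeal to Gelfond--Schneider; both rest on $\beta$ not being a root of unity, and irrationality is all that Furstenberg requires.
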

That the sequence is built out of powers of primes from $\Sigma$ guarantees that each $n_i \equiv 1 \Mod{4}$, and in fact each divisor of $n_i$ must also be congruent to $1 \Mod{4}$.
\begin{proof}
	We construct such a sequence by repeatedly applying Theorem \ref{thm:nonlacunary}. Let $x_1$ be defined by $e^{2\pi i x_1} = \beta$. Note that $x_1$ is irrational (in fact transcendental by Gelfond-Schneider). Let $n_1$ be any element of $\Sigma$ such that $n_1 x_1 > 1/2 \Mod{1}$. This implies that $2^{n_1+1}\Im(\beta^{n_1}) < 0$. To construct $n_2$, set $x_2 = n_1 x_1$ so that $e^{2 \pi i x_2} = \beta^{n_1}$. Since $x_2$ is also irrational, Theorem \ref{thm:nonlacunary} applies to $\Sigma x_2$ to prove an $m_2 \in \Sigma$ such that $m_2 x_2 < 1/2 \Mod{1}$. Set $n_2 = m_2n_1$. Note that $n_1 \mid n_2$. Proceeding in this manner constructs the desired sequence.
\end{proof}
We now extract a sequence $(d_i)_{i=1}^{\infty}$ where $d_i \mid n_i$ and $d_i$ satisfies the hypotheses of Proposition \ref{ramification1}.
\begin{lem}\label{divisorsequence} There exists a sequence $\left(d_i\right)_{i=1}^{\infty}$ of positive integers such that
	\begin{enumerate}
		\item If $i \neq j$, then $d_i \neq d_j$.
		\item For each $i$, $\abs{N_{\Q\left(\zeta_{d_i}+\zeta_{d_i}^{-1}\right)/\Q}\left(c_{d_i}\right)} \equiv 3 \Mod{4}$.
	\end{enumerate}
\end{lem}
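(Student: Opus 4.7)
The plan is to build the desired sequence from divisors of consecutive terms of the sequence $(n_i)$ produced by Lemma \ref{auxilarysequnce}, exploiting the strict sign alternation of $2^{n_i+1}\Im(\beta^{n_i})$ together with the product formula of Lemma \ref{signsareequal}. Since each $n_i$ is a product of primes $\equiv 1\Mod{4}$, we have $n_i\equiv 1\Mod{4}$ (and $n_i\geq 5$), so Lemma \ref{signsareequal} applies and yields
\[
\prod_{d\mid n_i} N_{\Q(\zeta_d+\zeta_d^{-1})/\Q}(c_d) = 2^{n_i+1}\Im(\beta^{n_i}).
\]
By Lemma \ref{auxilarysequnce} the right-hand side is strictly negative when $i$ is odd and strictly positive when $i$ is even; strictness holds because $\beta$ is not a root of unity (as recorded in the proof of Lemma \ref{auxilarysequnce}, $e^{2\pi i x_1}=\beta$ with $x_1$ irrational), so $\Im(\beta^n)\neq 0$ for every $n\geq 1$.

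Next, using the chain condition $n_i\mid n_{i+1}$, I would factor, for each $i\geq 1$,
\[
\prod_{d\mid n_{i+1}} N_{\Q(\zeta_d+\zeta_d^{-1})/\Q}(c_d) = \left(\prod_{d\mid n_i} N_{\Q(\zeta_d+\zeta_d^{-1})/\Q}(c_d)\right)\left(\prod_{\substack{d\mid n_{i+1}\\ d\nmid n_i}} N_{\Q(\zeta_d+\zeta_d^{-1})/\Q}(c_d)\right).
\]
The two outer products have opposite signs by the previous paragraph, forcing the ``new'' factor on the right to be negative. Hence there is at least one divisor $d_i$ of $n_{i+1}$ with $d_i\nmid n_i$ and $N_{\Q(\zeta_{d_i}+\zeta_{d_i}^{-1})/\Q}(c_{d_i}) < 0$. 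Combined with Lemma \ref{normisonemodfour}, which asserts that this norm is $\equiv 1\Mod{4}$, its absolute value is then $\equiv 3\Mod{4}$, establishing item (2) of the statement.

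For distinctness, observe that $d_i\mid n_{i+1}$ and $n_{i+1}\mid n_j$ for all $j>i$ by item (2) of Lemma \ref{auxilarysequnce}, so $d_i\mid n_j$ whenever $j>i$; on the other hand $d_j\nmid n_j$ by construction, so $d_i\neq d_j$. Beyond invoking Lemma \ref{auxilarysequnce}, the argument presents no substantive obstacle: the alternating sign property and the chain divisibility together force the existence and distinctness of the $d_i$ via this telescoping identity, with the only nontrivial input being Furstenberg's Theorem \ref{thm:nonlacunary} applied to the irrational argument of $\beta$, which has already been absorbed into the construction of $(n_i)$.
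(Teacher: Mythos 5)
Your proof is correct and takes essentially the same approach as the paper: it factors the product over divisors of $n_{i+1}$ through the product over divisors of $n_i$, uses the sign alternation from Lemma \ref{auxilarysequnce} together with Lemma \ref{signsareequal} to extract a divisor of $n_{i+1}$ not dividing $n_i$ whose norm is negative, and invokes Lemma \ref{normisonemodfour} to turn negativity into the residue class $3\Mod{4}$. The only difference from the paper is a trivial index shift (the paper seeds the sequence with some $d_1 \mid n_1$ and then proceeds inductively), and the distinctness argument via $d_j \nmid n_j$ matches the paper's.
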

\begin{proof}
	We use the sequence $\left(n_i\right)_{i=1}^{\infty}$ constructed in Lemma \ref{auxilarysequnce}. Let us first construct $d_1$. We have that
	\[
	2^{n_1+1}\Im\left(\beta^{n_1}\right) = \prod_{d \mid n_1} N_{\Q\left(\zeta_d+\zeta_d^{-1}\right)/\Q}\left(c_d\right),
	\]
	by Lemma \ref{signsareequal} and the fact that the $l_i$ appearing in the definition of $\Sigma$ in Lemma \ref{auxilarysequnce} are all $1\Mod{4}$. Since $\prod\limits_{d \mid n_1} N_{\Q\left(\zeta_d+\zeta_d^{-1}\right)/\Q}\left(c_d\right)$ is negative by construction, we must have that some $N_{\Q\left(\zeta_d+\zeta_d^{-1}\right)/\Q}\left(c_d\right)$ is negative. Since $N_{\Q\left(\zeta_d+\zeta_d^{-1}\right)/\Q}\left(c_d\right) \equiv 1\Mod{4}$, by Lemma \ref{normisonemodfour}, we have that $\abs{N_{\Q\left(\zeta_d+\zeta_d^{-1}\right)/\Q}\left(c_d\right)} \equiv 3 \Mod{4}$. Set this $d$ equal to $d_1$. To construct $d_2$, we first consider $n_2$. We have that
	\[
	\prod_{d \mid n_1} N_{\Q\left(\zeta_d+\zeta_d^{-1}\right)/\Q}\left(c_d\right) \bigg| \prod_{d \mid n_2} N_{\Q\left(\zeta_d+\zeta_d^{-1}\right)/\Q}\left(c_d\right),
	\]
	because $n_1 \mid n_2$. However $\prod\limits_{d \mid n_2} N_{\Q\left(\zeta_d+\zeta_d^{-1}\right)/\Q}\left(c_d\right)$ is positive, so there must be some $d_2$ such that $d_2 \nmid n_1$ but $d_2 \mid n_2$ with $N_{\Q\left(\zeta_{d_i}+\zeta_{d_i}^{-1}\right)/\Q}\left(c_{d_i}\right)$ negative. Then, as before, $\abs{N_{\Q\left(\zeta_{d_2}+\zeta_{d_2}^{-1}\right)/\Q}\left(c_{d_2}\right)} \equiv 3\Mod{4}$. Proceeding in this manner we obtain the sequence.
\end{proof}
	Now Proposition \ref{primeset} can be proved easily.
	\begin{proof}[Proof of Proposition \ref{primeset}]
		We consider the sequence $(d_i)_{i=1}^{\infty}$ of Lemma \ref{divisorsequence}. For each such $d_i$, we have $\abs{N_{\Q(\zeta_{d_i}+\zeta_{d_i}^{-1})/\Q}(c_{d_i})} \equiv 3 \Mod{4}$, so there is some prime $p$ with $p \equiv 3\Mod{4}$ that divides $\abs{N_{\Q(\zeta_{d_i}+\zeta_{d_i}^{-1})/\Q}(c_{d_i})}$ an odd number of times. By Lemma \ref{primefinitelymanytimes}, any such prime $p$ divides $\abs{N_{\Q(\zeta_{d_i}+\zeta_{d_i}^{-1})/\Q}(c_{d_i})}$ for only finitely many $i$. Since $(d_i)_{i=1}^{\infty}$ is an infinite sequence, we conclude that there must be infinitely many distinct rational primes that are congruent to $3$ modulo $4$ that divide $\abs{N_{\Q(\zeta_d+\zeta_d^{-1})/\Q}(c_{d})}$ an odd number of times where $d$ ranges over the odd positive integers. Recalling that $N_{\Q(\zeta_d+\zeta_d^{-1})/\Q}\left(c_d\right)=N_{k_d/\Q}(r_d)$ completes the proof. 
	\end{proof}
\section{Torsion Points} \label{TorsionPointsSection}
Recall from the introduction that $E$ is the elliptic curve defined by $y^2 = x^3 + 2x^2 - 1$. We use the traditional variables $x$ and $y$ for an elliptic curve, and one may specify the birational map from the canonical component which is cut out by $R^3+(2-Z^2)R^2-1=0$ either by the map $C \dashrightarrow E$, $(R,Z) \mapsto (R,RZ)$ or by the coordinate change $R=x, Z=\dfrac{y}{x}$. Theorem \ref{maintheorem2} follows from the following proposition.
\begin{prop} \label{prop:2-adic valuation} Let $E$ be the elliptic curve defined by the Weierstrass equation $y^2 = x^3 + 2x^2 - 1$. Then, excepting the $2$-torsion points, every torsion point of $E$ has $x$-coordinate equal to an algebraic integer with positive $2$-adic valuation.
\end{prop}
We now introduce the relevant topological background material to explain how Proposition \ref{prop:2-adic valuation} implies Theorem \ref{maintheorem2}.
We begin by fixing some notation and recalling results of Hatcher and Hatcher-Thurston. 
\begin{thm}[\cite{HatcherThurston85}, \cite{HatcherBoundarySlopes}] \label{HatcherThurstonBundle}
Let $K$ be a hyperbolic two-bridge knot. Then
	\begin{enumerate}
		\item $E(K)$ has no closed, embedded, essential surface.
		\item All but finitely many Dehn surgeries are non-Haken and hyperbolic.
	\end{enumerate}
\end{thm}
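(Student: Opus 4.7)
The plan is to treat the two parts separately, since part (1) is the substantive Hatcher--Thurston result on incompressible surfaces in two-bridge knot exteriors and part (2) is a consequence of part (1) together with Thurston's hyperbolic Dehn surgery theorem and Hatcher's finiteness of boundary slopes.

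For part (1), I would follow the strategy of \cite{HatcherThurston85}. The input is that a two-bridge knot $K$ is presented by a 2-sphere $\Sigma \subset S^3$ meeting $K$ transversely in four points, decomposing $(S^3,K)$ into two trivial rational tangles. Given a putative closed, embedded, essential surface $S \subset E(K)$, I would first isotope $S$ so that it meets $\Sigma$ transversely with the minimum number of components of intersection. Each component of $S \cap \Sigma$ is then a simple closed curve in $\Sigma$ missing the four punctures, and the pieces of $S$ cut out in each rational tangle complement are incompressible planar surfaces with boundary on the four-punctured sphere. The heart of the argument is a combinatorial analysis of such planar surfaces in a rational tangle complement: using the standard product structure on the complement of a trivial two-string tangle, one classifies these surfaces by their boundary patterns and shows that either some piece is boundary-parallel (contradicting the minimality of $|S \cap \Sigma|$) or some piece admits a compression (contradicting incompressibility of $S$). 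This yields $S \cap \Sigma = \emptyset$, and then $S$ lies entirely in one tangle complement, where a direct argument rules it out.

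For part (2), the hyperbolicity half is immediate from Thurston's hyperbolic Dehn surgery theorem applied to the hyperbolic cusped manifold $E(K)$: all but finitely many slopes yield hyperbolic fillings. For the non-Haken half, I would combine part (1) with Hatcher's theorem \cite{HatcherBoundarySlopes} that the set of boundary slopes of incompressible, $\partial$-incompressible surfaces in a knot exterior in $S^3$ is finite. Suppose $E(K)(\alpha)$ is Haken and contains a closed essential surface $S'$. Isotope $S'$ to meet the core of the filling solid torus transversely in the minimum number of points. If this number is zero, then $S'$ lies in $E(K)$, contradicting part (1). Otherwise $S' \cap E(K)$ is a properly embedded essential surface in $E(K)$ whose boundary slope is $\alpha$; by Hatcher's theorem there are only finitely many such $\alpha$, so all but finitely many slopes yield non-Haken fillings. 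Intersecting the two cofinite sets of slopes proves (2).

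The main obstacle is part (1), which rests on the careful combinatorial case analysis of planar incompressible surfaces in rational tangle complements carried out in \cite{HatcherThurston85}; the other ingredients (Thurston's theorem and Hatcher's boundary-slope finiteness) are used as black boxes. Since both statements are attributed to the cited papers, in the present paper they would be invoked rather than re-proved, and the sketch above is meant only to indicate the structure of the original arguments being cited.
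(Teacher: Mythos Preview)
Your proposal is correct: the paper states this theorem purely as a citation of \cite{HatcherThurston85} and \cite{HatcherBoundarySlopes} with no proof, exactly as you anticipate in your final paragraph. Your sketch of the underlying arguments from those references is accurate in outline.
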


As noted in the introduction, the knot $7_4$ is a two-bridge knot, so Theorem \ref{HatcherThurstonBundle} implies that the exterior of the knot has no closed, embedded essential surface and all but finitely many of its surgeries are hyperbolic and non-Haken. Bass's theorem (see \cite{BassTheorem} or \cite[Section 5.2]{MR}) implies that if $N = \mathbf{H}^3/\Gamma$ is a hyperbolic surgery on $7_4$, then the traces of $\Gamma$ are algebraic integers. In particular at points corresponding the character of Dehn surgeries, the trace of a meridian (with finitely many exceptions) is an algebraic integer. However, when $Z$ is integral, the relation $R^3+(2-Z^2)R^2-1=0$ implies that $R$ is a unit. Then Proposition \ref{prop:2-adic valuation} says that $R=x$ is never a unit when $R$ is the first coordinate of a torsion point on $X$. \par
Let us briefly treat the finitely many exceptions. There are 3 boundary slopes: $0/1$, $-8/1$, and $-14/1$ (see \cite{DunfieldBoundarySlopes}). The first is not hyperbolic; the second has integral traces as one can check in Snap; the third does have non-integral traces, so we must check it directly. One may compute that $R$ has negative $2$-adic valuation at this point and hence Proposition \ref{prop:2-adic valuation} also implies that this point cannot be torsion. \par
Finally, the $2$-torsion points are not covered by \ref{prop:2-adic valuation}. The $1$-torsion is just the point at infinity, and is in particular not in the image of the birational map defined above. The nontrivial $2$-torsion consists of three points. The $y$ coordinate of each of them is $0$ and the three $x$-coordinates are the roots of $x^3+2x^2-1$. All of these roots are real, which implies that the trace field associated to the representation is real, but every finite covolume Kleinian group has nonreal trace field, so no $2$-torsion can be a hyperbolic Dehn surgery point.
\subsection{Division Polynomials for Elliptic Curves}
We now recall some basic facts and fix notation about the division polynomials associated to an elliptic curve. These polynomials will be the main tool in the proof of Proposition \ref{prop:2-adic valuation}. In this section we use the variables $x$ and $y$ to be consistent with the literature on elliptic curves, but one may convert back to traces on the canonical component with the relations $R=x$ and $Z=\dfrac{y}{x}$.
\begin{defn} For an elliptic curve $E$ defined by the Weierstrass equation $y^2+a_1xy+a_3y = x^3+a_2x^2+a_4x+a_6$, we define the following standard quantities
	\[
	\begin{aligned}
	b_2 &= a_1^2 + 4a_2, \\
	b_4 &= 2a_4 + a_1a_3, \\
	b_6 &= a_3^2 + 4a_6, \\
	b_8 &= a_1^2a_6 + 4a_2a_6 - a_1a_3a_4 + a_2a_3^2 - a_4^2, \\
	c_4 &= b_2^2 - 24b_4, \\
	c_6 &= -b_2^3 + 36b_2b_4 - 216b_6, \\
	\Delta &= -b^2b_8 - 8b_4^3 - 27b_6^2 + 9b_2b_4b_6, \\
	j &= c_4^3/\Delta.
	\end{aligned}
	\]
\end{defn}
\begin{rem} For the curve in Proposition \ref{prop:2-adic valuation}, we have
	\begin{alignat*}{4}
	a_1 &= 0, &\qquad b_2 &= 8, &\qquad c_4 &= 64, &\qquad \Delta &= 80, \\
	a_2 &= 2, & b_4 &= 0, & c_6 &= 352, & j &= \dfrac{16384}{5} = \dfrac{2^{14}}{5}, \\
	a_3 &= 0, & b_6 &= -4 & & & & \\
	a_4 &= 0, & b_8 &= -8 & & & & \\
	a_5 &= -1.
	\end{alignat*}

\end{rem}

\begin{defn} For an elliptic curve $E$ defined by the Weierstrass equation $y^2+a_1xy+a_3y = x^3+a_2x^2+a_4x+a_6$, we define two families of polynomials $\psi_n(x,y)$ and $f_n(x)$ by
	\[
	\begin{aligned}
	\psi_1 &= 1, \\
	\psi_2 &= 2y+a_1x+a_3, \\
	\psi_3 &= 3x^4 + b_2x^3 + 3b_4x^2+3b_6x + b_8. \\
	\psi_4 &= \psi_2(x,y)\cdot\left(2x^6 + b_2x^5 + 5b_4x^4 + 10b_6x^3 + 10b_8x^2 + \left(b_2b_8-b_4b_6\right)x+(b_4b_8-b_6^2)\right),
	\end{aligned}
	\] 
	and then recursively via
	\begin{align}
	\psi_{2m+1} &= \psi_{m+2}\psi_m^3-\psi_{m-1}\psi_{m+1}^3, \label{psiodd} \\
	\psi_{2}\psi_{2m} &= \psi_{m-1}^2\psi_m\psi_{m+2} - \psi_{m-2}\psi_m\psi_{m+1}^2. \label{psieven} 
	\end{align}
	We note that $\psi_n(x,y)$ is a polynomial in $x$ when $n$ is odd, and---using the relation $(2y+a_1x+a_3)^2 = 4x^3+b_2x^2+2b_4x+b_6$--- $(2y+a_1x+a_3)\psi_n(x,y)=\psi_2(x,y)\psi_n(x,y)$ is a polynomial in $x$ when $n$ is even, so we may further define
	\[
	f_n(x) = 
	\begin{cases}
	\psi_n(x,y) &  n\text{ odd}, \\
	\psi_2(x,y)\psi_n(x,y) & n \text{ even}.
	\end{cases}
	\]
	These polynomials are known as the \textbf{division polynomials}. 
\end{defn}
\begin{rem} The notation and terminology surrounding the division polynomials is not entirely standard in the literature. The definition for $\psi_n$ above is consistent with \cite[Exercise 3.7]{SilvermanEC}. The definition of $f_n$ agrees with the GP/PARI (\cite{PARI2}) function \texttt{elldivpol} so that $f_n(x)$ is the output of \texttt{elldivpol(E,n)}.
\end{rem}
\begin{rem}
	The roots of $f_n(x)$ are precisely the $x$-coordinates of the nontrivial $n$-torsion points.
\end{rem}
\begin{prop} Let $f_n$ be as above. Then $f_n$ satisfies the following recursive relations.
	\begin{enumerate}
		\item If $n=2m$, then
		\[
		f_2f_{2m} = f_m\left(f_{m-1}^2f_{m+2}-f_{m-2}f_{m+1}^2\right).
		\]
		\item If $n \equiv 1 \Mod{4}$ and we write $n=2m+1$, then
		\[
		f_2^2f_{2m+1} = f_{m+2}f_m^3-f_2^2f_{m-1}f_{m+1}^3.
		\]
		\item If $n \equiv 3 \Mod{4}$, and we write $n=2m+1$, then
		\[
		f_2^2f_{2m+1} = f_2^2f_{m+2}f_m^3-f_{m-1}f_{m+1}^3.
		\]
	\end{enumerate}
\end{prop}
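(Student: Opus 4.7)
The plan is to derive each of the three identities directly from the $\psi_n$--recursions already supplied in the definition, namely
\[
\psi_{2m+1}=\psi_{m+2}\psi_m^3-\psi_{m-1}\psi_{m+1}^3,\qquad \psi_2\psi_{2m}=\psi_{m-1}^2\psi_m\psi_{m+2}-\psi_{m-2}\psi_m\psi_{m+1}^2,
\]
and to translate them into $f_n$--identities using the case-by-case definition $f_n=\psi_n$ for $n$ odd and $f_n=\psi_2\psi_n$ for $n$ even. The one auxiliary fact I will use throughout is that $f_2=\psi_2^2$ (since $2$ is even), so $f_2$ can absorb powers of $\psi_2$ that appear in the denominator after substituting $\psi_n=f_n/\psi_2$ at even $n$.

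For part (1), I would start from the second $\psi$-recursion, multiply both sides by $\psi_2^2=f_2$ to produce $f_2 f_{2m}$ on the left, and then substitute $\psi_k$'s by $f_k$'s according to the parity of $k$. The indices $\{m-2,m-1,m,m+1,m+2\}$ contain three of one parity and two of the other, regardless of whether $m$ is even or odd; I will carry out both subcases and observe that in each the $\psi_2$ powers collected from the substitutions cancel to give exactly $f_m(f_{m-1}^2f_{m+2}-f_{m-2}f_{m+1}^2)$. For part (2), note $n=2m+1\equiv 1\pmod{4}$ forces $m$ even, so $\psi_m=f_m/\psi_2$, $\psi_{m+2}=f_{m+2}/\psi_2$, $\psi_{m-1}=f_{m-1}$, $\psi_{m+1}=f_{m+1}$; substitute into the first $\psi$-recursion and multiply through by $\psi_2^4=f_2^2$. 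The term coming from $\psi_{m+2}\psi_m^3$ carries $\psi_2^{-4}$ and becomes $f_{m+2}f_m^3$, while the term from $\psi_{m-1}\psi_{m+1}^3$ is carried along as $f_2^2 f_{m-1}f_{m+1}^3$, giving the stated identity. For part (3), $m$ is odd so the roles of the even and odd indices swap ($\psi_{m\pm 1}=f_{m\pm 1}/\psi_2$, $\psi_m=f_m$, $\psi_{m\pm 2}=f_{m\pm 2}$), and the same multiplication by $\psi_2^4=f_2^2$ produces the identity with $f_2^2$ attached to the other term.

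There is essentially no analytical obstacle; the entire proof is bookkeeping of $\psi_2$--powers dictated by parity. The only point requiring a moment's care is confirming that the polynomial identity in part (1) holds in both parity cases for $m$ with no residual $\psi_2$ factor left over, and that the asymmetry between parts (2) and (3) is exactly the asymmetry produced by whether the cubed factor $\psi_{m+1}^3$ is an odd-index or even-index term. Once the parity bookkeeping is laid out in a short table, each identity follows by a single line of substitution.
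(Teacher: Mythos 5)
Your proposal is correct and takes essentially the same approach as the paper: substitute $\psi_k = f_k$ or $\psi_k = f_k/\psi_2$ according to the parity of $k$, use $f_2 = \psi_2^2$, and multiply the two $\psi$-recursions by the appropriate power of $\psi_2$ to clear denominators. The paper likewise splits the even case into $n \equiv 0$ and $n \equiv 2 \pmod 4$ and observes that the residual $\psi_2$ factors cancel identically in both subcases.
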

\begin{proof}
	These all follow from the recursive formulas for $\psi_n$, but we include a proof since we were unable to find them in the literature. \par
	We have to treat each residue class modulo $4$ separately. So first suppose that $n \equiv 0 \Mod{4}$. That is, $n=2m$ for $m$ an even number.
	Using Equation \ref{psieven}, we have
	\[
	\begin{aligned}
	f_n(x) &= \psi_2\psi_{2m} \\
	&=\psi_m(f_{m-1}^2\psi_{m+2}-\psi_{m-2}f_{m+1}^2).
	\end{aligned}
	\]
	Note that $f_2 = \psi_2^2$, so
	\[
	\begin{aligned}
	f_2f_{2m} &= \psi_2\psi_m(f_{m-1}^2\psi_2\psi_{m+2}-\psi_2\psi_{m-2}f_{m+1}^2) \\
	&= f_m(f_{m-1}^2f_{m+2}-f_{m-2}f_{m+1}^2).
	\end{aligned}
	\]
	\par
	Now say that $n \equiv 2 \Mod{4}$ so that $n-2m$ for $m$ an odd number. Using Equation \ref{psieven} again, we obtain
	\[
	\begin{aligned}
	f_n = f_{2m} &= \psi_2\psi_{2m} \\
	&= f_m(\psi_{m-1}^2f_{m+2} - f_{m-2}\psi_{m+1}^2).
	\end{aligned}
	\]
	Then we find
	\[
	\begin{aligned}
	f_2f_{2m} &= \psi_2^2f_{2m} \\
	&= f_m\left(\left(\psi_2\psi_{m-1}\right)^2f_{m+2}-f_{m-2}\left(\psi_2\psi_{m+1}\right)^2\right) \\
	&= f_m\left(f_{m-1}^2f_{m+2}-f_{m-2}f_{m+1}^2\right).
	\end{aligned}
	\]
	\par
	Next we let $n \equiv 1 \Mod{4}$, so $n = 2m+1$ for $m$ even. Using Equation \ref{psiodd} gives
	\[
	\begin{aligned}
	f_2^2f_n &= \psi_2^4f_{2m+1} \\
	&= (\psi_2\psi_{m+2})(\psi_2\psi_m)^3 - f_2^2f_{m-1}f_{m+1}^3 \\
	&= f_{m+2}f_m^3 - f_2^2f_{m-1}f_{m+1}^3.
	\end{aligned}
	\]
	\par
	Finally, we treat $n \equiv 3 \mod{4}$. That is, $n=2m+1$ for $m$ odd. Again using Equation \ref{psiodd} yields
	\[
	f_{2m+1} = f_{m+2}f_m^3 - \psi_{m-1}\psi_{m+1}^3,
	\]
	so we have
	\[
	\begin{aligned}
	f_2^2f_{2m+1} &= \psi_2^4f_{2m+1} \\
	&= f_2^2f_{m+2}f_m^3 - (\psi_2\psi_{m-1})(\psi_2\psi_{m+1})^3 \\
	&= f_2^2f_{m+1}f_m^3 - f_{m-1}f_{m+1}^3.
	\end{aligned}
	\]
\end{proof}

\subsection{Proof of Proposition \ref{prop:2-adic valuation}} 
Let us now specialize to the case where $E$ is the elliptic curve defined by the Weierstrass equation $y^2 = x^3 + 2x^2 - 1$.
For reference we list the first four division polynomials for this particular elliptic curve.
\[
\begin{aligned}
f_1(x) &= 1, \\
f_2(x) &= 4x^3 + 8x^2 - 4, \\
f_3(x) &= 3x^4 + 8x^3 - 12x - 8, \\
f_4(x) &= 8x^9 + 48x^8 + 64x^7 - 168x^6 - 672x^5 - 896x^4 - 416x^3 + 192x^2 + 256x + 64.
\end{aligned}
\]
\begin{lem}\label{DivisibilityLemma}
	If $2 \mid n$, then $f_2 \mid f_n$.
\end{lem}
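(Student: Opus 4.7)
The plan is to induct on the even integer $n$. For the base cases, $f_2 \mid f_2$ is immediate. For $n = 4$, the definition of $\psi_4$ given in the statement of the division polynomials exhibits $\psi_4 = \psi_2 \cdot P(x)$ with $P(x) \in k[x]$ explicit, so $f_4 = \psi_2 \psi_4 = \psi_2^2 P(x) = f_2 \cdot P(x)$ (using that $f_2 = \psi_2^2$, since $a_1 = a_3 = 0$ for our Weierstrass equation). Thus $f_2 \mid f_4$.

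For the inductive step, let $n = 2m \geq 6$ be even and assume $f_2 \mid f_{n'}$ for every even $n' < n$. I would apply the first recursive identity established in the previous proposition,
\[
f_2 f_{2m} = f_m\bigl(f_{m-1}^2 f_{m+2} - f_{m-2} f_{m+1}^2\bigr),
\]
and split into two cases based on the parity of $m$. If $m$ is even, then $m$, $m-2$, and $m+2$ are all even positive integers strictly less than $n = 2m$ (all indices are bounded by $m+2 < 2m$, which holds since $m \geq 4$ in this case), so the inductive hypothesis gives $f_2 \mid f_m$, $f_2 \mid f_{m-2}$, and $f_2 \mid f_{m+2}$. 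Hence each of the two summands on the right-hand side, namely $f_m f_{m-1}^2 f_{m+2}$ and $f_m f_{m-2} f_{m+1}^2$, is divisible by $f_2^2$. If instead $m$ is odd, then $m-1$ and $m+1$ are even and at most $m+1 < 2m$, so $f_2 \mid f_{m \pm 1}$, whence $f_{m-1}^2$ and $f_{m+1}^2$ are each divisible by $f_2^2$.

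In either case the right-hand side is divisible by $f_2^2$, and since $f_2$ is a nonzero element of the integral domain $k[x]$, cancellation yields $f_2 \mid f_{2m}$, completing the induction.

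The only real subtlety is verifying that the indices $m, m \pm 1, m \pm 2$ appearing in the recurrence are strictly less than $n = 2m$; this fails precisely when $m = 2$, i.e., $n = 4$, which is why that case must be dispatched separately using the factored form of $\psi_4$ from the definition rather than the recurrence.
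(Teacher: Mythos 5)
Your proof is correct. It does, however, take a genuinely different route from the paper's. You prove $f_2 \mid f_n$ all at once by showing that the right-hand side of the recursion
\[
f_2 f_{2m} = f_m\left(f_{m-1}^2 f_{m+2} - f_{m-2} f_{m+1}^2\right)
\]
is divisible by $f_2^2$ (using the inductive hypothesis on the even-indexed factors, together with the parity split on $m$), and then cancel one factor of $f_2$. The paper instead separates the polynomial part from the arithmetic part: it factors $f_2(x) = 4(x+1)(x^2+x-1)$, observes that $(x+1)(x^2+x-1) \mid f_n$ because the $2$-torsion lies inside the $n$-torsion (so the roots of $f_2/4$ are among the roots of $f_n$), and then runs essentially the same induction you do, but only to control the integer constant — showing $4 \mid f_n$ by arguing that $16$ divides the right-hand side. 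The parity case analysis in that last step is the same as yours. The paper's version makes the geometric content (torsion inclusion) explicit, which is in keeping with the surrounding discussion; your version is more self-contained, requiring only the recurrence and the explicit $\psi_4$ factorization, and avoids invoking the identification of the roots of $f_n$ with $n$-torsion $x$-coordinates. One small caution: you phrase the cancellation as taking place in $k[x]$, but since the lemma is used downstream for integer divisibility (e.g.\ $2^{k+1} \mid f_n$ in $\Z[x]$), you should make clear that the inductive divisibilities and the final cancellation all live in $\Z[x]$, which they do since $\Z[x]$ is an integral domain and $f_2 \neq 0$.
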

\begin{proof}
	Note that $f_2(x) = 4(x+1)(x^2+x-1)$ and that $x+1$ and $x^2+x-1$ have roots equal to the $x$-coordinates of the nontrivial $2$-torsion. Since $E[2] \subseteq E[n]$ whenever $2 \mid n$, we have that $(x+1)(x^2+x-1) \mid f_n$, so it suffices to show that $4 \mid f_n$. Let us write $n=2m$ and induct on $m$. The case of $m=1$ is trivial, and we use the recursive formula
	\[
	f_2f_{2m} = f_m\left(f_{m-1}^2f_{m+2}-f_{m-2}f_{m+1}^2\right)
	\]
	for the inductive step. Since $4 \mid f_2$, but $8 \nmid f_2$, it suffices to show that $16 \mid f_m\left(f_{m-1}^2f_{m+2}-f_{m-2}f_{m+1}^2\right)$. First suppose that $m$ is even, so that $m, m+2,$ and $m-2$ are all even. Then inductively, $4 \mid f_m, f_{m+2}, f_{m-2}$, which implies the result. Now if $m$ is odd, $m + 1$ and $m-1$ are even, so $16$ divides both $f_{m=1}^2$ and $f_{m+1}^2$.
\end{proof}
The main technical proposition is as follows.
\begin{prop}\label{FactList} Let $f_n$ be as above.
	\begin{enumerate}[label={(\arabic*)},ref={\theprop~(\arabic*)}]
		\item \label{FirstFact}
		\[
		\deg(f_n) = 
		\begin{cases}
		n^2/2 + 1 & n \textnormal{ even}, \\
		(n^2-1)/2 & n \textnormal{ odd}.
		\end{cases}
		\]
		
		\item  \label{SecondFact} The leading coefficient of $f_n(x)$ is $2n$ when $n$ is even and $n$ when $n$ is odd.
		\item \label{ThirdFact} If $n$ is odd, then $f_n(x) \equiv \pm x^{\frac{n^2-1}{2}} \Mod{4}$. 
		\item \label{FourthFact} Let $n$ be even and $k$ equal to the $2$-adic valuation of $n$. Then $2^{k+1} \mid f_n(x)$ in $\Z[x]$, and $\dfrac{1}{2^{k+1}}f_n(x) \equiv (x+1)(x^2+x+1)(x^{n^2/2-2}) \Mod{2}$. 
	\end{enumerate}
\end{prop}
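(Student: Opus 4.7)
My plan is to prove all four parts simultaneously by strong induction on $n$, using the recursion formulas from the preceding proposition, with the base cases $n \in \{1,2,3,4\}$ verified directly from the explicit list above.

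Parts (1) and (2) are the easier half. Each recursion expresses $f_2^{\epsilon} f_n$ (with $\epsilon \in \{1,2\}$) as a sum of two products of lower division polynomials. Plugging in the inductive values of the degrees and leading coefficients, one checks that the two right-hand summands have unequal leading terms---so that no cancellation occurs at top degree---and the predicted values for $\deg f_n$ and the leading coefficient of $f_n$ pop out after dividing by the leading term of $f_2^{\epsilon}$. The arithmetic is routine, splitting into the obvious cases based on the parity of $m$ in each of the three recursive formulas.

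Parts (3) and (4) form the substantive core, and I would carry them together in the induction, because each needs mod-$2^{v_2(i)+2}$-type information about the previous $f_i$. The key structural observation is that $f_2 = 4(x+1)(x^2+x-1)$ contributes exactly $v_2 = 2$ and that $x^2+x-1 \equiv x^2+x+1 \pmod{2}$; combined with Lemma \ref{DivisibilityLemma} (every even-indexed $f_n$ is divisible by $f_2$), every appearance of $f_2$ in the recursion has a clean mod-$2$ interpretation. For each case of the recursion, I would compute the $2$-adic valuation of each summand on the right-hand side using the inductive hypothesis, verify that the minimum-valuation summand is not annihilated by the other, and then extract the mod-$2$ image of the quotient after dividing by $f_2^{\epsilon}$. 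For odd $n$, the squared $f_2^2$ on the left supplies enough $2$-divisibility on the right to absorb the nontrivial piece of the recursion and leave only $\pm x^{(n^2-1)/2}$ modulo $4$. For even $n = 2m$, the leading behaviour of the dominant summand, combined with Lemma \ref{DivisibilityLemma}, pins down both $v_2(f_n) = v_2(n)+1$ and the mod-$2$ image $(x+1)(x^2+x+1)x^{n^2/2-2}$.

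The main obstacle will be the cancellation bookkeeping in part (4). The lower bound $v_2(f_n) \geq v_2(n)+1$ follows easily from a crude count, but showing equality and identifying the precise mod-$2$ reduction of $f_n/2^{v_2(n)+1}$ requires treating the whole induction as a package that propagates not just valuations but the full low-order reductions of each $f_i$. Splitting into the four cases for $m \bmod 4$ in the recursion $f_2 f_{2m} = f_m(f_{m-1}^2 f_{m+2} - f_{m-2}f_{m+1}^2)$, and tracking carefully which of the two right-hand summands dominates $2$-adically, is where essentially all the detail lives.
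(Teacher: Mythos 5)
Your plan matches the paper's proof in all essential respects: strong induction on $n$ across all four parts simultaneously, with (1) and (2) resolved by routine degree and leading-coefficient bookkeeping (verifying the two summands' top-degree terms never cancel), and (3) and (4) carried as a coupled package that propagates mod-$4$ information for odd indices and the mod-$2$ reduction of $f_n/2^{v_2(n)+1}$ for even ones, using $f_2 = 4(x+1)(x^2+x-1)$ and Lemma \ref{DivisibilityLemma} exactly as you describe. The paper's case split for part (4) is organized by $k = v_2(n) \in \{1, 2, \geq 3\}$ rather than by $m \bmod 4$, but this is the same decomposition up to grouping the two odd residues of $m$ as analogous sub-cases.
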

Before embarking on the proof, let us point out that Proposition \ref{ThirdFact} and \ref{FourthFact} imply that, excepting the factors coming from the $1$ and $2$-torsion, the constant term of every irreducible factor of $f_n(x)$ has a factor of $2$, even after dividing out by the leading coefficient of $f_n(x)$. This is equivalent to the root of these irreducible factors having positive $2$-adic valuation. Proposition \ref{prop:2-adic valuation} and hence Theorem \ref{maintheorem2} then follow.
\begin{proof}[Proof of Proposition \ref{FactList}]
	We prove all parts by induction on $n$. The base cases are easy to check, so assume each statement is true for all indices less than or equal to $n-1$. \par
	We first prove Proposition \ref{FirstFact}. First suppose that $n$ is even and write $n=2m$. We then have
	\[
	f_2f_{2m} = f_m\left(f_{m-1}^2f_{m+2}-f_{m-2}f_{m+1}^2\right).
	\]
	Here we have to break into further cases where $m$ is even or odd. Let us treat $m$ even first. Then our inductive hypothesis implies that $\deg(f_m) = m^2/2+1$, $\deg(f_{m+2}) = (m+2)^2/2 + 1$, and $\deg(f_{m-2}) = (m-2)^2/2 + 1$. Moreover, $m \pm 1$ is odd, so $\deg(f_{m+1}^2) = m^2 + 2m$ and $\deg(f_{m-1}^2) = m^2 - 2m$. Hence, $\deg(f_{m-1}^2f_{m+2}) = \deg(f_{m-2}f_{m+1}^2)  = \dfrac{3m^2+6}{2}$. Our inductive hypothesis (in particular Proposition \ref{SecondFact}) implies that the leading coefficient of $f^2_{m-1}f_{m+2}$ is equal to $2(m+2)(m-1)^2 = 2m^3-6m+4$ whereas the leading coefficient of $f_{m-2}f_{m+1}^2$ is $2(m-2)(m+1)^2 = 2m^3-6m-4$, so coefficient of $x^{\frac{3m^2+6}{2}}$ in $f_{m-1}^2f_{m+2}-f_{m-2}f_{m+1}^2$ is $8$. In particular it's nonzero so that $\deg(f_{m-1}^2f_{m+2}-f_{m-2}f_{m+1}^2) = \dfrac{3m^2+6}{2}$. It follows that $\deg(f_2f_{2m}) = m^2/2+1 + \dfrac{3m^2+6}{2} = 2m^2+4$. Noting that $\deg(f_2) = 3$ gives that $\deg(f_{2m}) = \deg(f_n) = 2m^2+1 = n^2/2 + 1$. \par
	Now suppose that $m$ is odd. We still have the same recursive relation because $n$ is even. However since $m$ is odd, we now have $\deg(f_m) = (m^2-1)/2$, $\deg(f_{m-1}^2) = m^2-2m + 3$, $\deg(f_{m+2}) = (m^2+4m+3)/2$, so then $\deg(f_{m-1}^2f_{m+2}) = \frac{3}{2}(m^2+3)$. Similarly, $\deg(f_{m-2}) = (m^2-4m+3)/2$ and $\deg(f_{m+1}^2) = m^2 + 2m + 3$ together imply $\deg(f_{m-2}f_{m+1}^2) = \frac{3}{2}(m^2+3)$ as well. As before, one can compute using Proposition \ref{SecondFact} of the inductive hypothesis that the coefficient of $x^{\frac{3}{2}(m^2+3)}$ is nonzero (in fact equal to $16$), so $\deg(f_{m-1}^2f_{m+2}-f_{m-2}f_{m+1}^2) = \frac{3}{2}(m^2+3)$. Then we have $\deg(f_2f_{2m}) = 2m^2 + 4$, so $\deg(f_{2m}) = \deg(f_n) = 2m^2 + 1 = n^2/2 + 1$. \par
	To handle the case of $n$ odd, we have to separately consider when $n$ is congruent to $1$ or $3$ modulo $4$. If we write $n=2m+1$, these two cases are equivalent to $m$ even and odd, respectively. Let us treat $n \equiv 1 \Mod{4}$ first. The recursive relation is
	\[
	f_2^2f_{2m+1} = f_{m+2}f_m^3-f_2^2f_{m-1}f_{m+1}^3.
	\]
	Here $m$ is even, so $\deg(f_{m+2}) = (m^2+4m+6)/2$, and $\deg(f_m^3) = (3m^2+6)/2$. Combining these gives $\deg(f_{m+2}f_m^3) = 2(m^2+m+3)$. Similar calculations give $\deg(f_2^2f_{m-1}f_{m+1}^3) = 2(m^2+m+3)$. Again the leading coefficients do not cancel, so $f_{m+2}f_m^3 - f_2^2f_{m-1}f_{m+1}^3$ has degree $2(m^2+m+3)$ with leading coefficient $32m+16$. It follows that $\deg(f_{2m+1}) = 2(m^2+m)$, so $\deg(f_n) = (n^2-1)/2$. \par
	The last case is $n \equiv 3 \Mod{4}$. That is, $n=2m+1$ for $m$ odd. We have the relation
	\[
	f_2^2f_{2m+1} = f_2^2f_{m+2}f_m^3-f_{m-1}f_{m+1}^3.
	\] 
	In this case, $\deg(f_2^2) = 6$, $\deg(f_{m+2}) = \dfrac{(m+2)^2-1}{2}$, and $\deg(f_m^3) = 3\left(\dfrac{m^2-1}{2}\right)$. These imply $\deg(f_2^2f_{m+2}f_m^3) = 2(m^2+m+3)$. Similarly $\deg(f_{m-1}) = \dfrac{m^2-2m+3}{2}$ and $\deg(f_{m+1}^3) = \dfrac{3m^2+6m+9}{2}$ give that $\deg(f_{m-1}f_{m+1}^3) = 2(m^2+m+3)$. Similar to earlier cases, we then have that $f_2^2f_{m+2}f_m^3-f_{m-1}f_{m+1}^3$ is of degree $2(m^2+m+3)$ with leading coefficient $32m+16$. Then, as in the $n\equiv 1 \Mod{4}$ case, we have $\deg(f_n) = \deg(f_n) = (n^2-1)/2$, which completes the proof of Proposition \ref{FirstFact}. \par
	We now prove Proposition \ref{SecondFact}. For a polynomial $g$, let us write $\LC(g)$ for its leading coefficient. We have actually done the relevant calculations in the proof of Proposition \ref{FirstFact}. When $n \equiv 0 \Mod{4}$, so $n=2m$ with $m$ even, we have that $\LC\left(f_{m-1}^2f_{m+2}-f_{m-2}f_{m+1}^2\right) = 8$ so that $\LC(f_2f_{2m}) = 8\LC(f_m) = 16m$. Hence, $\LC(f_{2m}) = \LC(f_n) = 4m = 2n$. Similarly when $n=2m$ for $m$ odd, we have $\LC\left(f_{m-1}^2f_{m+2}-f_{m-2}f_{m+1}^2\right) = 16$, so $\LC(f_2f_{2m}) = 16\LC(f_m) = 16m$.
	For $n$ odd, we have that $\LC(f_2^2f_{2m+1}) = 16(2m+1)$ so that $\LC(f_{2m+1}) = 2m+1$. \par
	For Proposition \ref{ThirdFact}, we first treat the case where $n \equiv 1 \Mod{4}$, so we may write $n = 2m + 1$ for $m$ even. We note that the relation 
	\[
	f_2^2f_{2m+1} = f_{m+2}f_m^3-f_2^2f_{m-1}f_{m+1}^3
	\]
	implies that $f_2^2 \mid f_{m+2}f_m^3$. In fact, Lemma \ref{DivisibilityLemma} implies that $f_2$ divides both $f_{m+2}$ and $f_m$ so that $f_2^4 \mid f_{m+2}f_m^3$, which in particular implies that $\dfrac{f_{m+2}f_m}{f_2^2} \equiv 0 \Mod{4}$ as it has a factor of $f_2^2$, which is divisible by $4$. It follows that $f_{2m+1} \equiv -f_{m-1}f_{m+1}^3 \Mod{4}$. By the inductive hypothesis we have that $f_{m-1} \equiv \pm x^{(m^2-2m)/2} \Mod{4}$ and $f_{m+1} \equiv \pm x^{(m^2+2m)/2} \Mod{4}$, so $f_{2m+1} \equiv \pm x^{(2m^2+2m)} \Mod{4}$. That is, $f_n \equiv \pm x^{(n^2-1)/2} \Mod{4}$. The case of $n \equiv 3 \Mod{4}$ may be handled similarly, completing the proof of Proposition \ref{ThirdFact}. \par
	To begin the proof of Proposition \ref{FourthFact}, let us fix the notation that when $n$ is even, $f_n = f_2g_n$, and when $n$ is odd $f_n = g_n$. We break into cases based on the $2$-adic valuation of $n$. Let $k = v_2(n)$ be first equal to $1$, so we may write $n=2m$ for $m$ odd. In this case $m \pm 1$ are is even, so we have
	\[
	f_2f_{2m} = f_m(f_2^2g_{m-1}^2f_{m+2}-f_{m-2}f_2^2g_{m+1}^2),
	\]
	which implies
	\[
	f_{2m} = f_m(f_2g_{m-1}^2f_{m+2}-f_{m-2}f_2g_{m+1}^2).
	\]
	Now since $m$ is odd, exactly one of $m+1$ and $m-1$ is divisible by $4$. Say $m-1 \equiv 0 \Mod{4}$ (the other case is follows analogously). Then $f_{m-1}/4=f_2g_{m-1}/4$ is divisible by $2$ by inductive hypothesis. Then we have that
	
	\begin{align}
	\dfrac{f_{2m}}{4} &= f_m\left(\dfrac{f_2}{4}g_{m-1}^2f_{m+2}-f_{m-2}\dfrac{f_2}{4}g_{m+1}^2\right) \nonumber \\
	&\equiv (x+1)(x^2+x+1)f_mf_{m-2}g_{m+1}^2 \Mod{2}. \label{1/4f2m}
	\end{align}
	Now, since $g_{m+1}f_2=f_{m+1}$, the inductive hypothesis says 
	\[
	\dfrac{f_{m+1}}{4} \equiv (x+1)(x^2+x+1)x^{(m+1)^2/2-2} \Mod{2},
	\]
	so the fact that $f_2/4 \equiv (x+1)(x^2+x+1) \Mod{2}$ implies that $g_{m+1} \equiv x^{(m+1)^2/2-2} \Mod{2}$. So then since $f_m$ and $f_{m-2}$ both have odd indices, we may apply Proposition \ref{ThirdFact} to Equation \ref{1/4f2m} to obtain
	\[
	\begin{aligned}
	\dfrac{f_{2m}}{4} &\equiv (x+1)(x^2+x+1)x^{(m^2-1)/2}x^{((m-2)^2-1)/2}x^{(m+1)^2-4} \Mod{2} \\
	&\equiv (x+1)(x^2+x+1)x^{2m^2-2} \\
	&\equiv (x+1)(x^2+x+1)x^{n^2/2-2},
	\end{aligned}
	\]
	which handles the case of $k=1$. \par
	Next, suppose that $k=2$. We again write $n=2m$ and here have $m$ even with $v_2(m)=1$. Note that one of $m+2$ and $m-2$ will have $2$-adic valuation equal to $2$, and the other will have $2$-adic valuation at least $3$. Both possibilities lead to the same argument, so assume $v_2(m-2) \geq 3$ and hence $v_2(m+2) = 2$. Then we write $f_{m+2}=4(x+1)(x^2+x-1)g_{m-2}$. Then our inductive hypothesis implies
	\[
	\dfrac{f_{m+2}}{8} \equiv (x+1)(x^2+x+1)x^{(m+2)^2/2-2} \Mod{2}.
	\]
	So we have $g_{m+2}/2 \equiv x^{(m+2)^2/2-2} \Mod{2}$. The inductive hypothesis also implies that $16 \mid f_{m-2}$, and $f_{m-2} = 4(x+1)(x^2+x-1)g_{m-2}$, so $4 \mid g_{m-2}$. Hence, $g_{m-2}/2 \equiv 0 \Mod{2}$. Note that since $v_2(m)=1$ and $m-1$ is odd we have $f_m/4 \equiv (x+1)(x^2+x+1)x^{m^2/2-2} \Mod{2}$ and $f_{m-1}^2 \equiv x^{m^2-2m} \Mod{2}$. Combining all this we obtain
	\[
	\begin{aligned}
	\dfrac{f_{2m}}{8} &= \dfrac{f_m}{4}\left(f_{m-1}^2\dfrac{g_{m+2}}{2}-\dfrac{g_{m-2}}{2}f_{m+1}^2\right) \\
	&\equiv (x+1)(x^2+x+1)x^{2m^2-2} \\
	&\equiv (x+1)(x^2+x+1)x^{n^2/2-2}.
	\end{aligned}
	\]
	\par
	The last case to consider is $k \geq 3$. As always, write $n=2m$. Keeping the notation of the previous parts, we write our recursive relation as
	\[
	\dfrac{f_{2m}}{2^{k+1}} = \dfrac{f_2g_m}{2^k}\left(\dfrac{f_{m-1}^2g_{m+2}-g_{m-2}f_{m+1}^2}{2}\right).
	\]
	The leading coefficient of $f_{m-1}^2f_{m+2}-f_{m-2}f_{m+1}^2$ is $8$ by Proposition \ref{FirstFact} and Proposition \ref{SecondFact}. This implies that the leading coefficient of $f_{m-1}^2g_{m+2}-g_{m-2}f_{m+1}^2 =  (f_{m-1}^2f_{m+2}-f_{m+2}f_{m+1}^2)/f_2$ is $2$. Then,
	\begin{equation}
	\dfrac{f_{m-1}^2g_{m+2}-g_{m-2}f_{m+1}^2}{2} \label{fourthfactexpression1}
	\end{equation}
	is monic, and the degree can be checked to be correct using the inductive hypothesis. Then, it suffices to show that every monomial term in Equation \ref{fourthfactexpression1} other than the leading one is divisible by $2$. That is, we want to show that every monomial term besides the leading term in $f_{m-1}^2g_{m+2}-g_{m-2}f_{m+1}^2$ is divisible by $4$. Since $m \pm 1$ is odd, we have that every term other than the leading terms in $f_{m\pm 1}$ are divisible by $4$ by Proposition \ref{ThirdFact}, so we just need to show it for $g_{m \pm 2}$. Note that when $m$ is even,
	\[
	g_{2m} = g_m(f_{m-1}^2g_{m+2}-g_{m-2}f_{m+1}^2),
	\]
	so one may see that $g_{2m}$ only has its leading term not divisible by $4$ inductively.
\end{proof}
\begin{rem}[Affine Intersection Points]
Chu showed in \cite[Section 5.1]{ChuCharacterVarieties} that there are four affine points on the canonical component, $C$, for $7_4$ which intersect the other component of the character variety containing the character of an irreducible representation. They lie in a number field $L$ of degree $4$. These intersection points detect Seifert surfaces and so are also of geometric interest. The image of each of these points under the birational map $C \dashrightarrow E$ has $x$-coordinate equal to $1\pm i$; in particular they have positive a $2$-adic valuation. However they are still infinite order. One may compute with Magma (\cite{Magma}) or other software that the torsion subgroup of $E(L)$ is isomorphic to $\Z/6\Z$, but none of these four points has order less that or equal to $6$.  
\end{rem}
	\printbibliography
\end{document}